\newcommand{\ex}{\mathbb{E}}
\newcommand{\prob}{\mathbb{P}}
\newcommand{\e}{\epsilon}
\newcommand{\ds}{\displaystyle}
\newcommand{\N}{\mathbb{N}}
\newcommand{\R}{\mathbb{R}}
\newcommand{\wt}[1]{\widetilde{#1}}
\newcommand{\lra}{\longrightarrow}
\newcommand{\mb}[1]{\mathbb{#1}}
\newcommand{\mc}[1]{\mathcal{#1}}
\newcommand{\md}{\mathrm{d}}
\newcommand{\BR}{\mathbb R}
\newcommand{\dist}{\overset{d}{=}}
\renewcommand\thesection{\arabic{section}}
\newcounter{thm}
\theoremstyle{plain}
\newtheorem{theorem}[thm]{Theorem}
\numberwithin{thm}{section}
\newtheorem{lemma}[thm]{Lemma}
\newtheorem{prop}[thm]{Proposition}
\newtheorem{cor}[thm]{Corollary}
\newtheorem{P-C}[thm]{Theorem}
\newtheorem{H-L}[thm]{Theorem}
\theoremstyle{definition}
\newtheorem{remark}[thm]{Remark}
\begin{document}
%AAP
\begin{frontmatter}

% "Title of the paper"
\title{Hydrodynamic limit and Propagation of Chaos for Brownian 
Particles reflecting from a Newtonian barrier}
\runtitle{Brownian Particles reflecting from a Newtonian barrier}

% indicate corresponding author with \corref{}
% \author{\fnms{John} \snm{Smith}\corref{}\ead[label=e1]{smith@foo.com}\thanksref{t1}}
\thankstext{t1}{The author is currently a Zuckerman Postdoctoral Scholar at the Technion-Israel Institute of Technology. Thanks to Krzysztof Burdzy, the anonymous referee, Wai-Tong (Louis) Fan, and Spencer Gibbs for advice and suggestions. This work began at University of Washington, Seattle.}
% \address{line 1\\ line 2\\ printead{e1}}
% \affiliation{Some University}

\author{\fnms{Clayton L.} \snm{Barnes}\ead[label=e1]{cbarnes@campus.technion.ac.il}}
\affiliation{Technion-Israel Institute of Technology}
\address{Faculty of Industrial Engineering and Management,\\
Technion-Israel Institute of Technology,\\
Haifa 3200, Israel. \\ \printead{e1}}

\runauthor{Clayton L. Barnes}

\begin{abstract}
In 2001, Frank Knight constructed a stochastic process modeling the one dimensional interaction of two particles, one being Newtonian in the sense that it obeys Newton's laws of motion, and the 
other particle being Brownian. We construct a multi-particle analog, using Skorohod map estimates in proving a propagation of chaos, and
characterizing the hydrodynamic limit as the solution to a PDE with free boundary condition. This PDE resembles the Stefan problem but has a Neumann type boundary condition. Stochastic methods are used to show existence and uniqueness for this free boundary problem.
\end{abstract}

\begin{keyword}[class=MSC]
\kwd[Primary ]{60}
\kwd{60K35}
\kwd[; secondary ]{60J55}
\kwd{82}
\end{keyword}

\begin{keyword}
\kwd{Propagation of Chaos}
\kwd{Brownian motion}
\kwd{Reflected Diffusions}
\kwd{Hydrodynamic Limit}
\kwd{Skorohod Maps}
\kwd{Free Boundary Problems}
\end{keyword}

\end{frontmatter}
%AAP
\section{Introduction} 

\subsection{Description of the model}
Consider $n$ Brownian particles $X_1^{(n)}(t),. .\\X_n^{(n)}(t)$ on the 
real line, reflecting from the same side of a moving barrier $Y^{(n)}(t).$ The
moving barrier is ``massive'' in the sense that it is not Brownian but 
obeys Newton's laws of motion. 
% \begin{figure}
% \centering
% 	\includegraphics[scale =.15]{20HL2(BlueTone).pdf}
% 	\includegraphics[scale = .15]{40HL2(BlueTone).pdf}
% 	\includegraphics[scale =.15]{200HL2(BlueTone).pdf}
% 	\caption{Simulations of 20, 40, and 200 Brownian particles reflecting from the 
% 	Newtonian barrier.}
% \end{figure}{}
% (We write the superscript $n$ to stress the dependence on the total number of particles $n$. This will be useful a bit later, when we let $n \to \infty$.)
By this we mean the barrier is modeled to have momentum, and that it experiences an 
impulse upon colliding with one of the Brownian particles. Impulse is equivalent to  change in momentum, and in Newtonian physics is proportional to the change in 
velocity. In this way the Brownian particles drive the massive barrier by increasing 
(or decreasing, depending on sign) its velocity.
We assume the Brownian particles have an equal ``mass'' of $n^{-1}$ so the total mass of the system remains at unity, and we fix a constant
$K \geq 0$, the \emph{impulse constant}, which determines the strength of the 
Brownian particles' interaction with the massive barrier. Increasing $K$
will give the Brownian particles more ability to increase the massive barrier's 
velocity. If $K = 0$ the Brownian particles have no influence on the barrier,
the Brownian particles become independent reflecting Brownian motions while the barrier
will travel with constant speed. 
If $K >0$, however, the Brownian particles are dependent. This can be seen
intuitively, for in the event that one Brownian 
particle happens to impart a large impulse to the massive barrier, it
influences the barrier's trajectory and alters the region where the 
other Brownian particles are allowed to disperse themselves.
\begin{figure}
\centering
	\includegraphics[scale =.15]{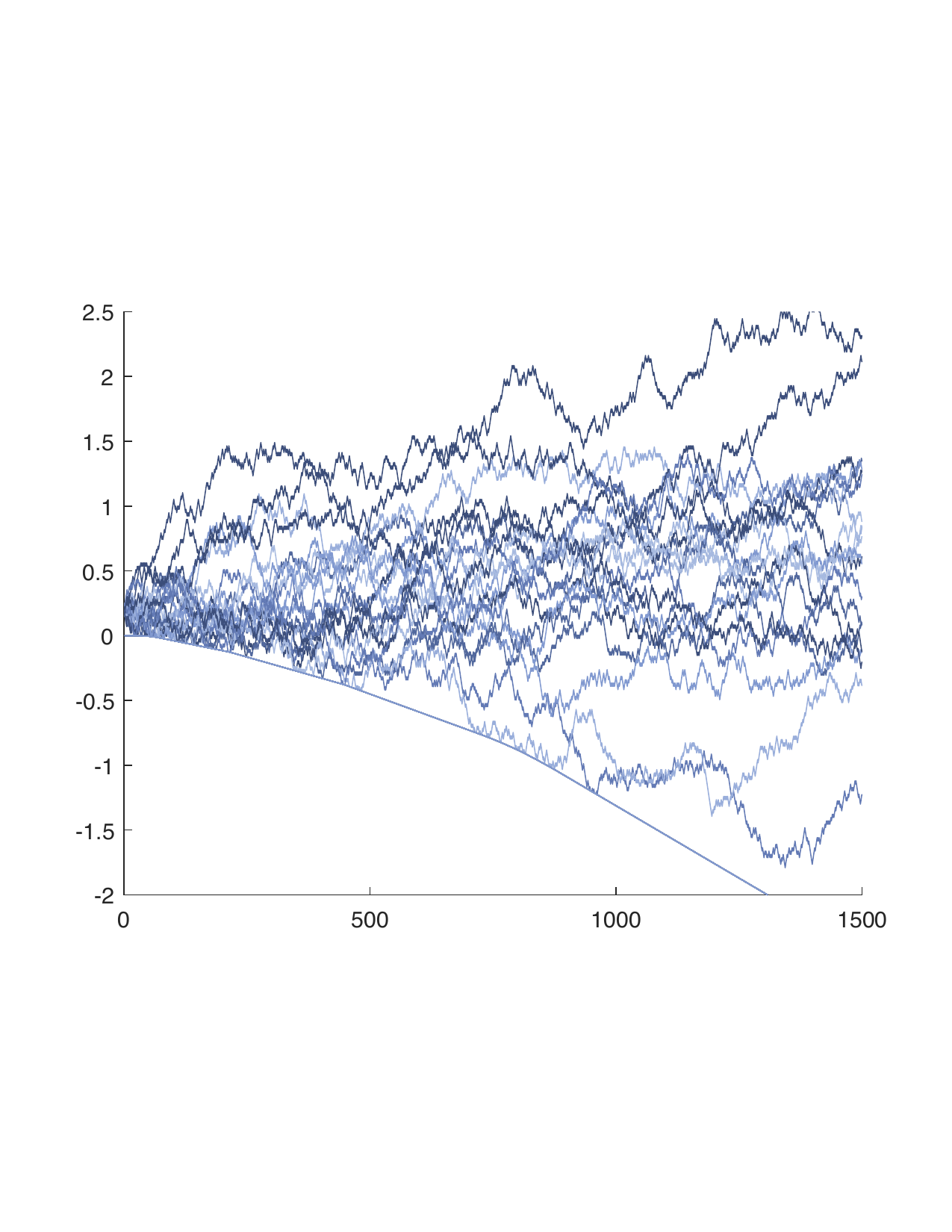}
	\includegraphics[scale = .15]{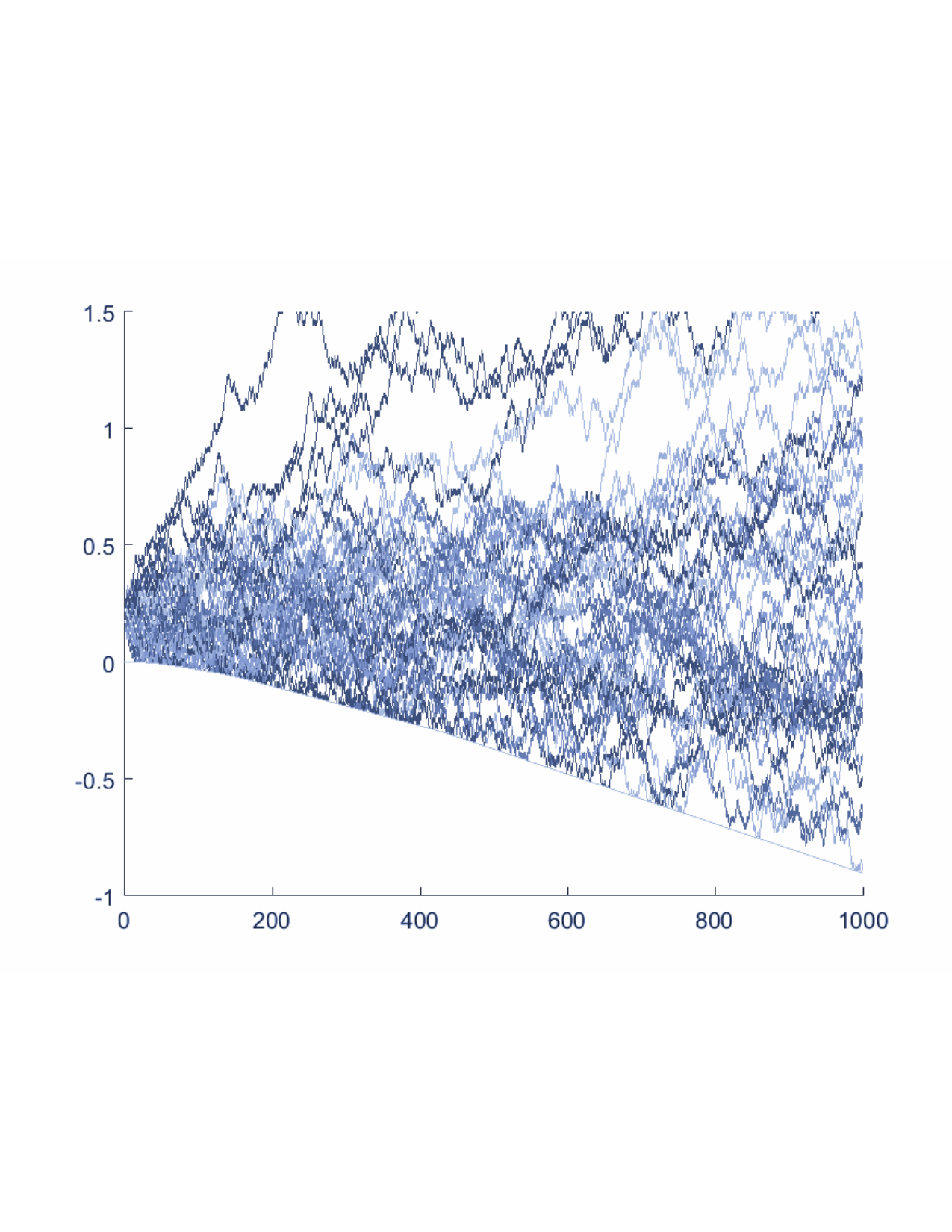}
	\includegraphics[scale =.15]{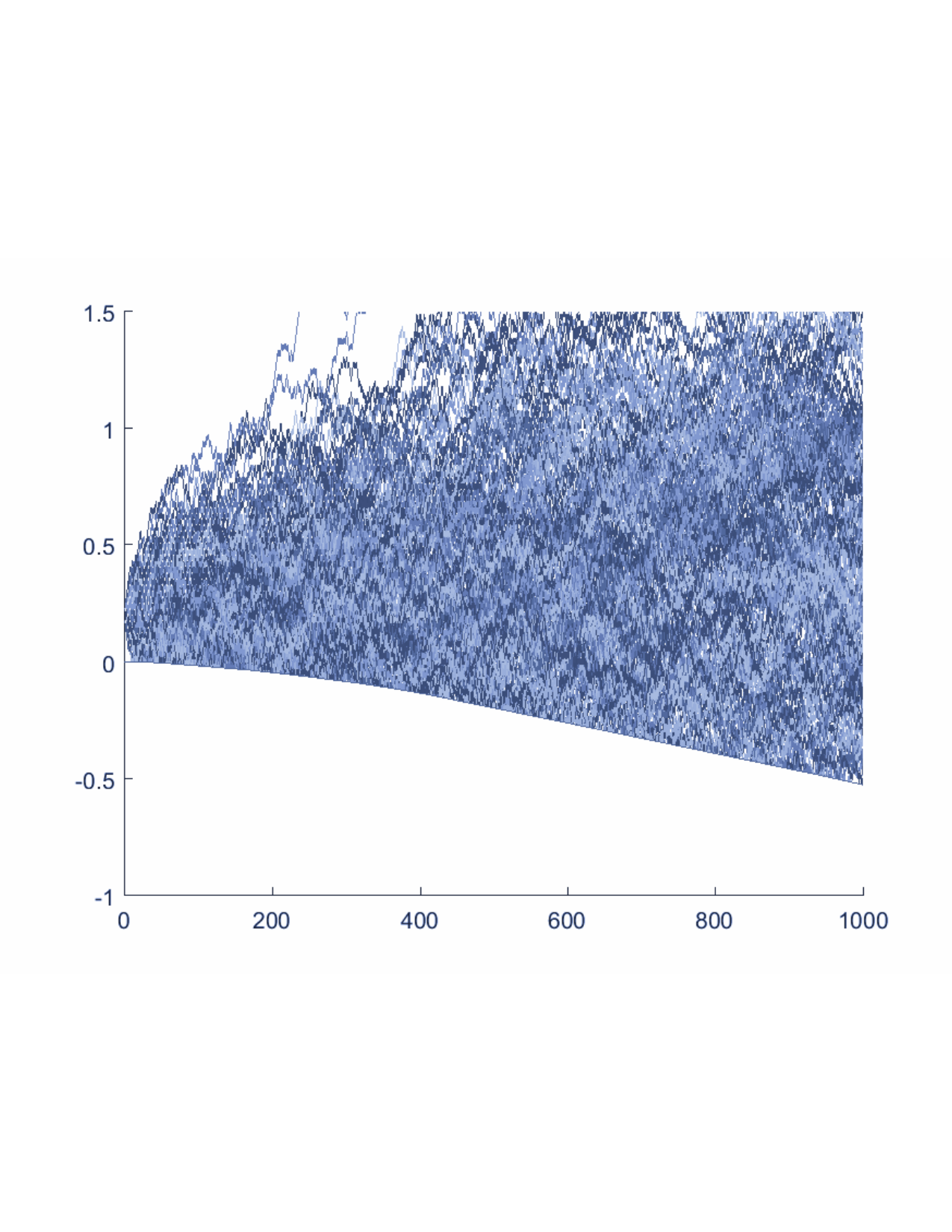}
	\caption{Simulations of 20, 40, and 200 Brownian particles reflecting from the 
	Newtonian barrier. The images were generated by the author.}
\end{figure}{} 
% The main question is to describe the behavior of the Brownian particles and the 
% massive barrier when the number of Brownian particles is exceedingly large.

To formally describe the above scenario, we begin by assuming the 
standard setting:\ a filtered probability space $(\Omega,\mc{F}, (\mathcal{F}_t)_{t \ge 0}, 
\prob)$ supporting i.i.d.\ $\mathcal{F}_t-$adapted Brownian motions $B^{(1)}, \ldots, B^{(n)},$ and where $\mc{F}_t$ satisfies the usual conditions. Fix a coefficient $K \geq 0$, the impulse constant, and
an initial velocity $v \in \R$ for the massive particle. 
Consider continuous $\mc{F}_t$ adapted processes which satisfy the following system for all $t \in [0, T]$ and $i = 1, \dots, n,$ almost surely:

\begin{flalign}\label{eq:sysLaw}\begin{split}
&X^{(n)}_i(t) = B^{(i)}(t) + L^{(n)}_i(t),\\
&\frac{\text{d}\,Y^{(n)}(t)}{\text{d}t} = V^{(n)}(t) := v-\frac{K}{n}\sum_{i=1}^nL^{(n)}_i(t),\\
&X^{(n)}_i(t) \geq Y^{(n)}(t),\\
&L^{(n)}_i \text{ is nondecreasing and flat off } \{s : X^{(n)}_i(s) = Y^{(n)}(s)\}.
\end{split}
\end{flalign}
The last condition means $L^{(n)}_i$ is nondecreasing and increases only when $X^{(n)}_i$ makes contact with $Y^{(n)}.$ That is, 
% \[
% \int_\R1_{\{z : X^{(n)}_i(z) > Y^{(n)}(z)\}}(s)dL^{(n)}_i(s) = 0.
% \]
\[
\int_\R1(X^{(n)}_i(s) > Y^{(n)}(s))\, \text{d}L^{(n)}_i(s) = 0.
\]
The local times are given by
\[
L^{(n)}_i(t) = \lim_{\e \to 0}\frac{1}{2\e}\int_0^t1_{[0, \e]}(X^{(n)}_i(s) - Y^{(n)}(s))\, \text{d}s \text{ for all $t \in [0, T],$}
\]
almost surely,
which is our definition of the local time of $X_i^{(n)}$ on $Y^{(n)}$. That such a system of SDEs exists with the above definition of local time is proved in Proposition \ref{prop:eqInLaw}.
We assume the initial condition $Y^{(n)}(0) = 0, \, V^{(n)}(0) = v$,
and assume $X_i^{(n)}(0)$, $i = 1, \ldots, n$ are $\mc{F}_0$ measurable
 i.i.d.\ nonnegative random variables with finite $q > 1$ moment.
 % $L^p([0, \infty))$ random variable, with $p \geq 1$ 
We require $\mc{F}_0$ to be large enough to contain $\sigma\{X_i^{(n)}(0) : 1 
\leq i \leq n\}.$ See the figure above for sample path realizations. Existence of
a strong solution to this system is proved in Proposition \ref{prop:eqInLaw}.

\smallskip

A system $(X^{(n)}_1, \ldots, X^{(n)}_n, Y^{(n)}, V^{(n)})$ satisfying 
\eqref{eq:sysLaw} above will be called a {\it system of Brownian particles 
reflecting from a massive barrier} with {\it impulse coefficient} $K$. The 
processes $X_1^{(n)}, \ldots, X^{(n)}_n$ are the {\it Brownian particles,}
$Y^{(n)}$ is the {\it reflecting barrier} with $V^{(n)}$
its {\it velocity}. 

\subsection{Free boundary problem} 
In Theorem
\ref{HL} we characterize the hydrodynamic 
limit of the empirical process together with the random barrier.
The hydrodynamic limit is the solution to a free boundary problem given
as a pair $(p(t, x), y(t))$, both of which interact according to the PDE
below. We think of $p(t, x)$ as the temperature at time-space location
$(t, x)$ and $y(t)$ as an insulating barrier. The heat is concentrated above the insulating barrier, so
$p(t, \cdot)$ is supported on  $[y(t), \infty).$ We assume our initial 
condition $\pi_0(\md x)$ is the distribution of a random variable with finite $q$th moment for $q > 1$.

\begin{flalign}
\begin{split}\label{eq:pdeOfLimit}
&\frac{\partial \, p(t, x)}{\partial t} = \frac{1}{2}\frac{\partial^2 \, p(t, x)}{\partial x^2}, \ x > y(t),\\
&\frac{\partial^+ \, p(t, x)}{\partial x^+} = -2y'(t)p(t, x), \ x = y(t),\\
&y''(t) = -(K/2)p(s, y(s)), \, y(0) = 0, \, y'(0) = v \in \R,\\
&\lim_{t \downarrow 0}p(t, x)\md x = \pi_0(\md x),
\end{split}
\end{flalign}
\noindent
% The last condition means $\mc{W}_p(p(t, x)\md x, \pi_0) \to 0$ where
% $\mc{W}_p$ is defined below.
Here 
$$\frac{\partial^+p(t, x)}{\partial x^+} = \lim_{h \downarrow 0} \frac{p(t, x + h) - p(t, x)}{h}$$
 is the one sided derivative on the positive side. This second condition is mathematically equivalent to conservation of heat so the function $y(t)$ acts as an insulating barrier. Informally, one can see this by taking the time derivative of $\int_{y(t)}p(t, x)\,\md x$ and applying Leibniz integral rule. The factor of two in the second line cancels with the heat diffusion factor of one-half.
The third condition 
says the insulating barrier has an acceleration proportional to its temperature.
The last condition should be interpreted $\mc{W}_p(\pi_0, p(t, \md x)) \to 0$ as $\to 0$, where $\mc{W}_p$ is the Wasserstein/Kantorovich distance defined in subsection 1.3 and $q > p \geq 1, q \neq 2p$. See Remark \ref{remark:initialCondition}. (This odd conditions comes from Theorem \ref{W_p_Convergence}). The unique solution will be one in which the 
equalities above hold in the classical sense. That is, $p(t, x)$ is a differentiable function in its domain $\{(t, x) : 0 \leq t \leq T
, \, x \geq y(t)\},$ which is $C^1$ in time, $C^2$ in space, and $y \in 
C^2([0, T], \R).$

\subsection{Main results}
% The first result concerns existence and uniquess of the solution 
\begin{theorem}
There exists a unique classical solution to the free boundary problem 
\eqref{eq:pdeOfLimit}.
\end{theorem}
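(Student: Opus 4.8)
The plan is to split the statement into existence and uniqueness, obtaining existence by passing to the hydrodynamic limit of the particle system \eqref{eq:sysLaw} and proving uniqueness separately by a stochastic representation argument (as Section 4 is announced to do). For existence, I would first run the machinery of Section 3: the empirical measures of the $X^{(n)}_i$ together with the barrier $Y^{(n)}$ are tight in $C([0,T], \mc{P})\times C([0,T],\R)$ by Skorohod-map continuity estimates, and any subdistributional limit point $(p(t,\cdot)\,\md x, y(t))$ satisfies the free boundary problem \eqref{eq:pdeOfLimit} in a weak (integrated against test functions) sense. The content then is a \emph{regularity bootstrap}: one must upgrade the weak solution to a classical one, showing $p$ is $C^{1,2}$ on the open region $\{x>y(t)\}$ and that $y\in C^2$. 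Away from the boundary this is interior parabolic regularity; near the moving boundary one straightens $x\mapsto x-y(t)$ and uses Schauder estimates for the resulting uniformly parabolic equation with the oblique (conormal) boundary condition $\partial^+_x p = -2y'(t)p$, after first establishing enough Hölder regularity of $y'$ from the identity $y''=-(K/2)p(t,y(t))$ to feed the Schauder theory; this is a fixed-point/continuity loop between the regularity of $y$ and the regularity of $p$ up to the boundary.

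For uniqueness, I would use Corollary \ref{cor:densityOfRBM}, the stochastic representation: given a classical solution $(p,y)$, the function $p(t,x)$ must coincide with the density at $x$ of a Brownian motion reflected (in the Skorohod sense) off the deterministic barrier $y(\cdot)$, started from $\pi_0$, because the heat equation with the stated conormal reflecting condition is exactly the forward (Fokker--Planck) equation of that reflected diffusion — the conormal condition $\partial^+_x p = -2y'(t)p$ is precisely what makes the reflected process conserve mass and keeps the density consistent with the moving wall. Hence $p$ is \emph{determined} by $y$ via $p(t,\cdot) = \mathrm{Law}(\text{reflected BM at time }t)$, and this map $y\mapsto p$ is Lipschitz in an appropriate norm using the Skorohod-map Lipschitz estimates from Section 2. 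Plugging this into the ODE $y''(t) = -(K/2)\,p(t,y(t))$, $y(0)=0$, $y'(0)=v$, turns the system into a closed fixed-point equation for $y$ alone, of the form $y = \Phi(y)$ with $\Phi$ built from a double time integral of $p(t,y(t))$; a Gronwall/contraction argument on $[0,\tau]$ for small $\tau$, then iterated to cover $[0,T]$ since the constants do not blow up, gives uniqueness of $y$, and then uniqueness of $p$ follows.

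The main obstacle, I expect, is the boundary regularity in the existence half: getting from a measure-valued weak limit to a genuinely classical solution requires controlling $p$ and its derivatives \emph{up to} the free boundary $x=y(t)$, where $y$ itself is only as regular as the a priori estimates on $p(t,y(t))$ permit, so one cannot invoke off-the-shelf Schauder results without first bootstrapping. A secondary delicate point is showing the limiting density $p(t,x)$ is strictly positive (or at least continuous and nonnegative) near the boundary so that the conormal condition and the ODE for $y$ make classical sense; here the stochastic representation as the density of a reflected Brownian motion is again the right tool, since such densities are smooth and positive in the interior by the reflection principle and the support of $\pi_0$. The remaining steps — tightness, identification of limit points, and the Gronwall contraction — are comparatively routine given the Skorohod estimates already established in Section 2.
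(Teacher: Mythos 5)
Your uniqueness half is essentially the paper's argument: by Corollary \ref{cor:densityOfRBM} any classical solution $(p_i,y_i)$ with $y_i\in C^2$ forces $p_i$ to be the density of Brownian motion reflected off $y_i$ started from $\pi_0$, so $y_i''(t)=-(K/2)\,\ex\,m_i(t)$ with $m_i(t)=\sup_{u\le t}[-(B(u)+\xi-y_i(u))]\lor 0$, and the $1$-Lipschitz property of the running-maximum (Skorohod) map gives $|\ex(m_1(s)-m_2(s))|\le\|y_1-y_2\|_{[0,s]}$, whence a Gronwall/contraction step on a short interval and iteration yield $y_1=y_2$ and then $p_1=p_2$. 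That is exactly Section 4 of the paper.

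The existence half, however, has a genuine gap. You propose: take a weak (test-function) limit of the empirical measures and then run a regularity bootstrap (interior parabolic regularity, boundary straightening, Schauder estimates with the conormal condition, a fixed-point loop between the regularity of $y$ and of $p$). You correctly flag this bootstrap as the main obstacle, but you do not carry it out, and it is precisely the nontrivial content: the paper states in the introduction that this free boundary problem does not appear to be covered by existing analytic results, and its whole strategy is to \emph{avoid} the analytic bootstrap. In the paper, classical regularity is never ``upgraded'' from a weak formulation: the barrier limit $Y$ is first shown to be deterministic with $V(t)=-(K/2)\int_0^t p(s,Y(s))\,ds$ (Corollary \ref{cor:convToY} and the proof of Theorem \ref{HL}), so $Y\in C^2$ because the integrand is continuous (continuity of the density being imported from the reflected-SDE literature), and then $p$ is identified, via Girsanov and L\'evy's theorem, as the transition density of Brownian motion reflected from the $C^2$ moving boundary $Y$; the cited result of Burdzy, Chen and Sylvester (Proposition \ref{prop:densityOfRBM}) then says this density is already a classical solution of the heat equation with the conormal condition $\partial_x^+p=-2Y'(t)p$. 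So the classical smoothness of $(p,y)$ is inherited from the stochastic representation rather than proved by Schauder theory; your plan would require you to supply, from scratch, exactly the parabolic free-boundary regularity theory whose absence motivated the paper's probabilistic route, and as written that step is asserted rather than established. (Your secondary worry about strict positivity of $p$ at the boundary is not actually needed; continuity of $p(t,\cdot)$ up to the boundary suffices for the ODE $y''=-(K/2)p(t,y(t))$ to make classical sense, and this too comes from the reflected-diffusion representation.)
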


\noindent

For the hydrodynamic limit we consider the empirical measure 
$$
\pi^{(n)}_t = \frac{1}{n}\sum\limits_{i=1}^n\delta_{\{X^{(n)}_i(t)\}}.
$$ 
% \begin{figure}[h]
% \center
% \includegraphics[scale = .27]{goodrun2(5bm)mass}
% \caption{$\pi^{(5)}_t$ at a fixed time is the empirical measure with masses 
% centered at the location of the particles $X_1^{(5)}, \dots, X_5^{(5)}.$}
% \end{figure}
For fixed $t \ge 0$, $\pi^{(n)}_t$ is a random variable with values in the space  
$\mc{P}_p(\R)$. For a time horizon $T > 0,$ $\{\pi^{(n)}_t : t \in [0, T]\}$
is a process with paths in the space $C([0, T], (\mc{P}_p, \mc{W}_p))$ with metric
\[
\|\nu' - \nu''\|_{[0,T]} := \max_{t \in [0, T]}\mc{W}_p(\nu'(t), \nu''(t)).
\]
That this process indeed has a.s.\ continuous paths is proved in Lemma~\ref{lemma:contPi}. In other words, $\{\pi^{(n)}_t : t \in [0, T] \}$ is a continuous measure-valued process. As such, $\pi^{(n)}$ induces a probability measure on $C([0, T], (\mc{P}_p, \mc{W}_p)).$ The hydrodynamic limit characterizes this distribution for 
large $n$.

\begin{H-L}\label{HL} Let $q > p \geq 1, q \neq 2p.$
Assume that for some probability measure $\pi_0$ on $[0, \infty)$ with finite $q$th moment, $\pi_0^{(n)} \ \to \pi_0$ in $\mc{W}_p$.
Then,
\begin{align}
(\pi^{(n)}, Y^{(n)}) \lra (p(t, w) \mathrm{d} w, y(t)),\ \text{ as $n \to \infty$},
\label{th: HLConvDist}
\end{align}
in distribution on $C([0, T], (\mc{P}_p, \mc{W}_p) \times \R)$, where
$y \in C^2([0, T], \R),$ $p(t, x)$ is a probability density 
supported in $[y(t), \infty)$, and with $(p(t, x), y(t))$ solving \eqref{eq:pdeOfLimit}.
\end{H-L}

\noindent
The proof is in  subsection 3.2. 
\newline

The third result is the propagation of chaos, which means the dependence of any finite 
collection of tagged particles disappears as the number of particles
tends to infinity.

\begin{P-C}[Propagation of chaos] Assume for every $n$, $X^{(n)}_i(0) = \xi_i,\, i = 1, \ldots, n$, where $\xi_i, i \in \N$ are i.i.d.\ samples of a nonnegative integrable random variable. Fix positive integers $i_1, \ldots, i_k$. Then
$$
(X^{(n)}_{i_1}, \ldots, X^{(n)}_{i_k}) \lra (X^{(\infty)}_{i_1},  \ldots, X^{(\infty)}_{i_k})\ \
$$
in distribution on $C([0, T], \R^k)$, as $n \to \infty$,
and where the limit consists of independent processes $X^{(\infty)}_{i_1},  \ldots, X^{(\infty)}_{i_k}$. Furthermore, $X_{i_k}^{(\infty)}$ are independent Brownian motions reflecting from a deterministic function $y$. That is,
\[
X_{i_1}^{(\infty)}(t) = y(t) + m(t) + B_{i_1}(t) + X_{i_1}^{(\infty)}(0),
\]
where
\begin{align*} 
% &Y^{(n')} \overset{d}{\lra} Y := \int_0^\cdot V(s)\, \mathrm{d}s,\\
&y'(t) = v - K\,\ex \, m(t), \text{ for all $t \in [0, T],$ where}\\
&m(t) = \sup_{0  \leq s \leq t}\big((B^{(i_1)}(s) + X_{i_1}^{(\infty)}(0)) - y(s)\big)^-,
\end{align*}

 and $X_{i_1}^{(\infty)} \dist \pi_0(\mathrm{d}x).$
\label{th: P-C}
\end{P-C}
The $\xi_i$ are given so the processes have an initial condition
which does not depend on $n$ in the triangular array. This ensures that after $n \geq \max i_k$ 
the initial conditions for the $X^{(n)}_{i_1}, \dots, X^{(n)}_{i_k}$ are all defined and unchanging 
with $n.$ Like Theorem \ref{HL}, the proof of Theorem \ref{th: HLConvDist} is in subsection 3.2.

\begin{remark}
For fairly general exchangeable particle systems, propagation of chaos is equivalent to weak convergence of the
empirical measures to the limiting law $P(t)$ of one particle in the space $C([0, T], \mc{P})$. Here $X(t) \dist P(t)$ and $\mc{P}$ is the space of measures, on the appropriate target space, with the metric of weak convergence. See \cite{Meleard} for a good survey and proof of this in some important examples. This is weaker than the mode of convergence with we consider.
\end{remark}
\subsection{Organization of paper}
The paper is organized as follows. Subsection 1.5 contains a historical 
background for the origin of our model and on related hydrodynamic limits.

In Section 2, we construct the processes $X^{(n)}_i$ \emph{pathwise}
on any probability space supporting an infinite sequence of
i.i.d.\ Brownian motions $B^{(1)}, B^{(2)}, \dots,B^{(n)},$ and the initial random variables $X^
{(n)}_i(0)$ for all $i = 1, \dots, n, n \in \N$. We do this by constructing a functional to which we apply pathwise to the $n$ Brownian motions $B^{(1)}, \dots
, B^{(n)}.$ In Proposition~\ref{prop:eqInLaw} we show this pathwise construction
gives a system of processes satisfying \eqref{eq:sysLaw}. Such a method for
reflected processes is called a Skorohod map since Skorohod used the method
to construct a reflected Brownian motion on the positive half-line $\BR_+ := [0, \infty)$. For instance, if $B(t)$ is a standard Brownian motion and 
$m(t) = \sup_{0 < s < t}\max\{-B(s), 0\}$, then 
$B(t) + m(t)$ has the same distribution as $X$, where $dX = dB + dL$ and
$L$ is the semimartingale local time of $X$ at zero; see \cite[Section 
3.6C]{KaratzasShreve} and the original paper by Skorohod \cite{Skorohod_map}. Here $m(t)$ would be the Skorohod map which corresponds to reflected Brownian motion.
 % See \cite{burdzy2009skorokhod} for general results where 
% Burdzy, Kang and Ramanan construct Skorohod maps for time-varying domains.

In subsection 3.1 we introduce the lemmas and propositions  used in the proofs of Theorems \ref{HL} and \ref{th: P-C}. The proofs of these two theorems are contained in subsection 3.2.  
We use the estimates derived in the second section to demonstrate 
almost sure convergence of the barrier $Y^{(n)}$ to a unique deterministic 
function $y$ in the form of a functional strong law of large numbers; see Propositions~\ref{prop:subsequentialLimit} and \ref{prop:uniquenessSubLimit}. Here we introduce properties of the measure-valued process $\pi^{(n)}$ mentioned
above. In Proposition~\ref{prop:unifEquicPi}, we prove uniform stochastic equicontinuity, which is stronger than the typical stochastic equicontinuity 
necessary for tightness of processes in some metric space. 
% This is
% needed because we used the Wasserstein metric on the space of probability
% measures. Note that the Wasserstein metric is much stronger than the Prohorov metric 
% which is often used in hydrodynamic limits. See Remark \ref{remark:equicont}.
We conclude the paper with Section 4, where we use our stochastic representation to prove 
uniqueness of the free boundary problem described in subsection 1.2.

\subsection{Background}
Knight introduced the model \eqref{eq:sysLaw} in the case of one Brownian particle \cite{Knight2001}. He studied density of the 
final velocity of the inert particle $Y^{(1)}$. Later, White \cite{white2007} generalized Knight's 
construction and studied several related processes. This inspired a higher 
dimensional version of a reflected process whose velocity 
vector is proportional to the boundary local time, and the stationary distribution of
its position and velocity was studied by Bass, Burdzy, Chen, and Hairer \cite{bass2010stationary}.
% We construct a strong solution to a multi-particle system constructed earlier by both White and Knight
% (Proposition \ref{prop:eqInLaw}).

Historically, the study of macroscopic behavior  for systems of randomly interacting particles began in 1956 by Kac \cite{Kac} and continued with McKean \cite{McKean}
in 1969. This was followed by fundamental contributions during the 1980's by 
Sznitman \cite{Sznitman2,Sznitman1}, Tanaka \cite{Tanaka}, G\"artner \cite{Gartner} and many others. The hydrodynamic limit of a system of interacting particles is sometimes referred to as the macroscopic behavior of the system or the asymptotic behavior of the empirical measures, such
a result is closely related to propagation of chaos, and the two are equivalent equivalent when 
the system of interacting particles satisfies an exchangability condition (cf. Sznitman, M\'el\'eard). For a history of hydrodynamic limits see \cite{golse2005hydrodynamic} and \cite{chen2017systems}. Systems of randomly interacting particles are probabilistic models originally motivated by 
statistical mechanics and statistical thermodynamics, particularly the theory explored 
by Maxwell, Boltzmann, and Vlasov who describe the deterministic evolution of the distribution of 
gas. A good review of interacting particle systems of the McKean-Vlasov type is found in M\'el\'eard \cite{Meleard}. For instance, such systems are given by prescribing that the particles $X_i^{(n)}$ exhibit mean-field interaction. That is, the diffusivity and drift of each particle is a function of the particles' location and the empirical (energy) profile $\pi^{(n)}(\cdot)$. For $i = 1, \dots, n$,

\small
\begin{align}\label{eq:example1}
X_i^{(n)}(t) = X_i^{(n)}(0) + \int_0^tb(X_i^{(n)}(s), \pi^{(n)}(s))\, \md s + \int_0^t\sigma(X_i^{(n)}(s), \pi^{(n)}(s))\, \md B_i(s),
\end{align}

\normalsize
where $B_1, \dots, B_N$ are independent $d$-dimensional Brownian motions and $b(\cdot, \cdot)$ $\sigma(\cdot, \cdot)$ determine the drift and diffusivity, respectively. One usually assumes the initial conditions are i.i.d., or an appropriate asymptotic condition, to allow the propagation of chaos to hold at the initial time. For a fixed time $t$, $\pi^{(n)}(t)$ is in the space of probability measures on $\R^d$, denoted by $\mc{P}_p(\R^d)$. One places a metric on $\mc{P}_p(\R^d)$, typically the metric of weak convergence.
For a finite time horizon $T \in [0, \infty)$, $\{\pi^{(n)}(t) : t \in [0, T]\}$ is a measure valued process with paths in $C([0, T], \mc{P}_p(\R^d))$. Oelschl\"ager \cite{Karl_O} characterized 
the large-scale behavior of the system in \eqref{eq:example1} assuming $b, \sigma$ are 
sufficiently regular by demonstrating that $\pi^{(n)}(t)$ converges in distribution to $P(t)$ in 
the space $C([0, T], \mc{P}_p(\R^d))$. Here $P(t)$ is the law of the process $Z(t)$ at time $t$, 
where
\begin{align}\label{Vlasov}
Z(t) = Z(0) + \int_0^tb(Z(s), P(s))\, \md s + \int_0^t\sigma(Z(s), P(s))\, \md B(s),
\end{align}
for a $d$ dimensional Brownian motion $B$, and $Z(0) \dist \lim_{N \to \infty} X_1^{(n)}(0).$ This matches intuition upon inspection of \eqref{eq:example1}. The time varying distribution $P(s)$ is the solution to the Vlasov equation, thus giving one example of the macroscopic behavior of 
randomly interacting particles converging to a deterministic time varying distribution 
\cite{Meleard}. Furthermore, the individual processes $X_1^{(n)}$ converge to a process with time 
varying law $P$.

For other results on convergence of empirical processes, see 
\cite{varadhan1992entropy}, where Varadhan uses entropy 
methods to examine a spin 
system on a lattice when the mesh goes to zero. Entropy and relative entropy methods are general methods.
However, these are not always feasible. 
For instance, see \cite{chen2017systems},  where Chen and Fan study a system of 
particles reflecting from a separating interface. 
For an introductory reading on hydrodynamic limits, see the book \cite{
kipnis1999scaling} where Kipnis and 
Landim present a self contained treatment of hydrodynamic limits via the study of the 
generalized exclusion process and the zero-range process. These processes are continuous time
and discrete in space with spacial distance decreasing to zero.
Other hydrodynamic limit results have biological motivations
in neuron modeling. 
See \cite{pakdaman2010fluid}, \cite{de2015hydrodynamic}, and \cite[Chapter 4.3]{
greenwood2016stochastic}. Hydrodynamic limits are related to the theory of partial differential equations since the empirical measure of the particles
converge to a solution of
a PDE or free boundary problem.
In \cite{chayes1996hydrodynamic} Chayes and Swindle study the one dimensional model 
of hot random walkers which are emitted by a source and which annihilate cold 
particles which remain stationary. When a Brownian scaling is introduced, the density 
of the hot particles together with the cold region converge to the solution of the 
Stefan problem. The Stefan problem is a free boundary problem modeling the melting 
of ice next to a heat source. The heat particles are killed upon reaching the ice boundary,
 i.e.\ a 
Dirichlet boundary condition is imposed at the ice barrier, while the melting of this ice barrier is proportional to the flux of heat across it. In this way the 
density of heat and the ice barrier interact, producing the free boundary effect.
The hydrodynamic limit we study in this paper
resembles that of the Stefan problem but with some distinctive features; see \eqref{eq:pdeOfLimit}. In contrast with the Stefan problem our barrier reflects the heat back into the domain rather than absorbing it, and our barrier has an acceleration proportional to its temperature as opposed to its velocity being
proportional to the heat flux.

There is a large variety of interacting particle systems giving rise to many different limiting 
behaviors. See the above mentioned works of Tanaka \cite{Tanaka}, Sznitman \cite{Sznitman1,Sznitman2}, as well as Skorohod \cite{skorohod1987stochastic}, Nadtochiy and Shkolnikov \cite{nadtochiy2017particle}, Chen and Fan \cite{chen2017systems} to mention some. For related models of interacting particles with rank dependence, see Sarantsev 
\cite{sarantsev2015triple,sarantsev2017infinite}, Karatzas, Pal and Shkolnikov \cite{karatzas2016systems}, and Cabezas et al.\ who study out-of-equilibrium behavior of particles 
interacting through their ranks \cite{cabezas2017brownian}.

We briefly bring attention to the relatively recent study of stochastic free boundary problems. 
These are essentially SPDE's with a free boundary. See \cite{kim2012stochastic} and also in \cite{keller2016stefan}, who introduce a stochastic Stefan problem.

This article is the first in which continuity properties of Skorohod maps are used to demonstrate a hydrodynamic limit; see Section 2. By applying this method with a stochastic representation (Corollary 3.11), we prove existence and uniqueness of the free boundary problem without relying on existence and uniqueness theorems from the theory of PDEs. Properties of the transition
density for Brownian motion reflected in a time varying domains is a key ingredient for a stochastic representation of the PDE with free boundary; see \cite{burdzy2004}. This is the first existence and uniqueness result for the free boundary problem we study, as it seems not to be subsumed by known results in the analysis literature; see \cite{fasano1977generalI,fasano1977generalII,fasano1977generalIII}, for 
existence and uniqueness of the Stefan problem.

\subsection*{Notation}
For ease of reference we introduce notation which will be
used throughout the paper. First, let $(E, d)$ 
be a metric space. Anytime $\R^n$ is given we assume the standard norm.
\begin{enumerate}[resume]
\item $C(E_1, E_2)$ is the space of continuous functions from 
$(E_1, d_1)$ to $(E_2, d_2)$, equipped with the uniform metric unless
otherwise stated. We abbreviate $C([0, T], \R)$ as $C[0, T]$.
\\

\item $\mc{P}(E)$ is the space of probability measures on $E$. We may 
abbreviate $\mc{P}(\R)$ as $\mc{P}.$
\\
% \item The \emph{Prohorov metric} is the metrization of
% distributional convergence for the space $\mc{P}(E).$ 
% This is also a metric on the space of $E$-valued random variables
% through their induced measure on $E.$
% \\

\item For $f \in C[0, T]$ and $[a, b] \subset [0, T]$
\[
\|f\|_{[a, b]} := \max_{x \in [a, b]}|f(x)|.
\]

\item For $f = (f_1, \dots, f_n) \in C([0, T], \R^n)$ and $[a, b] \subset [0 ,T]$
\[
\|f\|_{[a, b]} := \sum_{i = 1}^n \|f_i\|_{[a, b]}.
\]

\item For $p \geq 1,$ we denote $\mc{P}_p(E)$ as the space of probability measures on $E
$ with finite $p$th moments, and let $\mc{P}_p = \mc{P}_p(\R).$ We write $(\mc{P}_p(E), \mc{W}_p)$ for the space together
with the Wasserstein-$p$ distance
\[
\mc{W}_p(\mu, \nu) := \Big(\inf_{\substack{(X, Y)}} \ex \, d(X, Y)^p\Big)^{1/p}
\]
where the infimum is taken over random variables $X$ and $Y$ coupled on the same probability
space, such that $X \overset{d}{=} \mu$
and $Y \overset{d}{=} \nu.$ If $(E, d)$ is complete then so is $(\mc{P}_p(E), \mc{W}_p).$
We consider $p \geq 1.$ See \cite{villani2003topics}.\label{def:Wp} 
\\

\item For $f \in C([0, T], (E, d))$ and $\delta > 0$ we define the modulus of
continuity for $f$ by
\[
\omega_{(E,d)}(f, \delta) := \sup_{\substack{0 \leq s < t \leq T \\ |t - s| < \delta}}d(f(t), f(s)).
\]
\item When $\nu_t \in C([0, T], (\mc{P}_p, \mc{W}_p))$ we let $\omega'(\nu, \delta) := \omega_{(\mc{P}_p, \mc{W}_p)}(\nu, \delta).$
\\
\item $a^+ = \max\{a, 0\}$ and $a^- = \max\{-a, 0\}$ are, respectively, the positive and negative 
part of $a.$ For a function $f$ we denote this as $(f(x))^{\pm}.$
\end{enumerate}

\section{Skorohod Map: Construction and Estimates}

\noindent
In this section we construct the system given in \eqref{eq:sysLaw} by 
applying a Skorohod map to the collection of Brownian paths. 
First, we recall the classical Skorohod equation from \cite[Chapter 3.6]{KaratzasShreve}.
\begin{lemma} Let $f \in C([0, T], \R)$
with $f(0) \geq 0.$ There is a unique continuous nondecreasing function
$m_f(t)$ such that
\begin{align*}
&x_f(t) = f(t) + m_f(t) \geq 0,\\
&m_f(0) = 0, \, m_f(t) \text{ is flat off } \{s : x_f(s) = 0\}.
\end{align*}
In particular,
\[
m_f(t) = \sup_{0 \leq s \leq t} (f(s))^-.
\]
\label{classicSLemma}
\end{lemma}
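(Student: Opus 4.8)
\emph{Proof proposal.} The plan is to establish existence by verifying directly that the stated formula furnishes an admissible $m$, and then to obtain uniqueness by a comparison argument that pins any admissible $m$ down to that same formula.

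For existence, observe first that since $f$ is continuous on the compact interval $[0,T]$ one has $\sup_{0<s<t}[-f(s)]=\sup_{0\le s\le t}[-f(s)]$, so it is convenient to work with $m_f(t)=\sup_{0\le s\le t}[-f(s)]\lor 0$. Monotonicity of $m_f$ is immediate since the supremum runs over an increasing family of intervals, and $m_f(0)=(-f(0))\lor 0=0$ because $f(0)\ge 0$. Continuity of $t\mapsto\sup_{0\le s\le t}[-f(s)]$ follows from uniform continuity of $f$ on $[0,T]$ (for $s<t$ the increment is bounded by $\omega_{(\R,|\cdot|)}(f,t-s)$), and composing with $\cdot\lor 0$ preserves continuity. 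Taking $s=t$ in the supremum gives $m_f(t)\ge -f(t)$, hence $x_f(t)=f(t)+m_f(t)\ge 0$. For the flatness property, suppose $x_f(s)>0$ for every $s$ in a closed subinterval $[a,b]\subseteq[0,T]$; we claim $m_f(a)=m_f(b)$. If $m_f(b)=0$ this is clear since $0\le m_f(a)\le m_f(b)$. Otherwise $m_f(b)=\sup_{0\le s\le b}[-f(s)]>0$ is attained at some $u^\ast\in[0,b]$ by compactness, and then $x_f(u^\ast)=f(u^\ast)+m_f(u^\ast)\le -m_f(b)+m_f(b)=0$ because $m_f(u^\ast)\le m_f(b)$; since $x_f>0$ on $[a,b]$ this forces $u^\ast\in[0,a)$, whence $m_f(a)\ge -f(u^\ast)=m_f(b)\ge m_f(a)$. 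For each connected component $(\alpha,\beta)$ of the relatively open set $\{x_f>0\}$, applying the claim on $[\alpha+\e,\beta-\e]$ and letting $\e\downarrow0$ shows $m_f$ is constant on $(\alpha,\beta)$, so $\int_0^T 1(x_f(s)>0)\,dm_f(s)=0$.

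For uniqueness, let $m$ be any continuous nondecreasing function with $m(0)=0$ such that $x:=f+m\ge 0$ and $m$ is flat off $\{x=0\}$, and write $M:=m_f$ for the explicit formula. From $x\ge 0$ we get $m(s)\ge -f(s)$ for all $s$, and since $m$ is nondecreasing with $m(0)=0$ this gives $m(t)\ge\sup_{0\le s\le t}[-f(s)]\lor 0=M(t)$. For the reverse inequality, suppose $m(t_0)>M(t_0)$ for some $t_0\in(0,T]$ and set $\sigma:=\sup\{s\in[0,t_0]:m(s)=M(s)\}$. This set contains $0$ (since $m(0)=M(0)=0$) and is closed by continuity of $m$ and $M$, so the supremum is attained and $m(\sigma)=M(\sigma)$. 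On $(\sigma,t_0]$ we have $m(s)>M(s)\ge -f(s)$, hence $x(s)>0$; flatness then forces $m$ to be constant on $(\sigma,t_0]$, so by continuity $m(t_0)=m(\sigma)=M(\sigma)\le M(t_0)$, contradicting $m(t_0)>M(t_0)$. Therefore $m\equiv M$, and $x=f+m$ is uniquely determined as well.

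I expect the one step requiring genuine care to be the flatness verification in the existence part, since it is the only place where continuity of $f$ itself (not merely of $m_f$) is used, via the extremizer $u^\ast$; the parallel step in the uniqueness argument is the other place to watch, where one must check that $\{s\le t_0:m(s)=M(s)\}$ is nonempty and closed so that $\sigma$ is attained. The remaining items---monotonicity, continuity, the value at $0$, and $x_f\ge0$---are routine.
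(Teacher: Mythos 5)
Your proof is correct. The paper states this classical lemma without proof, citing Karatzas--Shreve, and your argument---existence by direct verification of the explicit formula (including the flatness check via the extremizer $u^\ast$) and uniqueness via the last time of agreement with that formula---is essentially the standard argument from that reference, so there is nothing to add.
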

% \begin{remark}
% As stated in the introduction, flatness off $\{z : x_f(z) = 0\}$ for $m_f: [0, T] \to \R$ means 
% $\ds \int_0^T1(x_f(s) > 0) \, \text{d}m_f(s) = 0.$ The classical 
% L\'{e}vy's theorem says when $f$ is replaced by a Brownian motion, the corresponding process $x_f$ is distributed as $|B|.$
% \end{remark}

\begin{remark}
The solution of the Skorohod equation has a time shift property: For any $0 \leq s \leq t \leq T,$ 
\[
x_f(t)= x_{g}(t - s),
\]
where $g(t) = x_f(s) + f(t) - f(s).$ In other words, if $\tau_x: C[0, T] \to C[0, T - x]$ is the shift
operator defined by $h\circ \tau_x(t) = h(t + x)$, then for any $s \in [0, T]$
\[
x_f \circ \tau_s = x_g,
\]
where $g = f \circ \tau_s + x_f(s) - f(s).$
\label{re:timeShift}
\end{remark}

The following lemmas will be useful later when proving tightness of our processes; see Lemma \ref{lemma:tightness}.
\begin{lemma}
Let $f, g \in C([0, T], \R)$ and assume that $f \geq g.$ Then
\[
m_f(t) \leq m_g(t), \text{ for all } t \in [0, T].
\]
\label{lemma:SkLemma1}
\end{lemma}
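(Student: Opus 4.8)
The plan is to reduce everything to the explicit formula for the Skorohod reflection term supplied by Lemma~\ref{classicSLemma}, after which the statement becomes a one-line monotonicity observation. Concretely, I would first note that the hypothesis $f \geq g$ together with $g(0) \geq 0$ (needed for $m_g$ to be defined) forces $f(0) \geq 0$, so Lemma~\ref{classicSLemma} applies to both $f$ and $g$ and gives
\[
m_f(t) = \Big(\sup_{0 < s < t} [-f(s)]\Big) \lor 0, \qquad m_g(t) = \Big(\sup_{0 < s < t} [-g(s)]\Big) \lor 0.
\]

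The second step is the elementary comparison: since $f(s) \geq g(s)$ for every $s \in (0,t)$, we have $-f(s) \leq -g(s)$ for every such $s$, hence $\sup_{0 < s < t}[-f(s)] \leq \sup_{0 < s < t}[-g(s)]$. Taking the maximum with $0$ on both sides preserves the inequality, so $m_f(t) \leq m_g(t)$. Since $t \in [0,T]$ was arbitrary, this is exactly the claimed bound.

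There is essentially no obstacle here — the work was already done in establishing the closed form $m_f(t) = \sup_{0<s<t}[-f(s)] \lor 0$ in Lemma~\ref{classicSLemma}, and monotonicity of the supremum does the rest. The only thing worth a sentence of care is making sure both $m_f$ and $m_g$ are legitimately defined (i.e., that the relevant initial values are nonnegative), which is immediate from $f \geq g$. An alternative route that avoids invoking the explicit formula — arguing directly from the defining properties that $x_f \geq x_g$ and that $m_g$ is flat whenever $x_g > 0$ — is possible but strictly more cumbersome, so I would not take it.
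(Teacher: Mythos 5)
Your proposal is correct and is essentially identical to the paper's own proof: both invoke the explicit formula $m_f(t) = \sup_{0<s<t}[-f(s)] \lor 0$ from Lemma~\ref{classicSLemma} and conclude by monotonicity of the supremum under $f \geq g$. The extra sentence about the initial values being nonnegative is a harmless bit of bookkeeping and does not change the argument.
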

\begin{proof}
From Lemma \ref{classicSLemma}, 
\begin{align*}
m_f(t) = \sup_{0 \leq u \leq t}\big(f(u)\big)^- \leq \sup_{0 \leq u \leq t}\big(g(u)\big)^- = m_g(t).
\end{align*}
\end{proof}
\begin{lemma}
Let $f, y_1, y_2 \in C([0, T], \R)$ and assume that $y_1(0) = y_2(0),$  $f(0) + y_1(0) \geq 0,$ and
\begin{align}
y_1(t) - y_1(s) \geq y_2(t) - y_2(s) \text{ for all } 0 \leq s < t \leq T.
\label{eq:lemmaCond1}
\end{align}
Then
\[
m_{f+ y_2}(t) - m_{f+ y_2}(s) \geq m_{f + y_1}(t) - m_{f + y_1}(s), \text{ for all } 0 \leq s < t \leq T,
\]
where $m_{f + y_i}, i = 1,2$ correspond to the solution of the Skorohod problem provided by
Lemma \ref{classicSLemma}.
\label{lemma:skorohodIneq}
\end{lemma}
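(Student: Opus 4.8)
The plan is to reduce the claimed comparison of increments to a pointwise comparison of suprema, via an increment formula for the reflection terms. Write $h_i := f + y_i$, $m_i := m_{h_i}$, $x_i := x_{h_i}$; note both Skorohod problems are well posed since $h_2(0) = f(0) + y_2(0) = f(0) + y_1(0) \ge 0$ by hypothesis and $y_1(0) = y_2(0)$. Applying the time-shift property (Remark~\ref{re:timeShift}) at a fixed time $s$, one checks that $r \mapsto m_i(s+r) - m_i(s)$ solves the Skorohod problem driven by $r \mapsto x_i(s) + h_i(s+r) - h_i(s)$; reading off the explicit formula in Lemma~\ref{classicSLemma} and using $x_i(s) - h_i(s) = m_i(s)$ then gives
\[
m_i(t) - m_i(s) = \Big( \sup_{s < u < t}\big[-h_i(u)\big] - m_i(s)\Big) \lor 0 , \qquad 0 \le s \le t \le T .
\]

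The second ingredient is the pointwise bound
\[
m_2(s) - m_1(s) \le y_1(s) - y_2(s), \qquad 0 \le s \le T.
\]
Hypothesis \eqref{eq:lemmaCond1} together with $y_1(0) = y_2(0)$ says that $y_1 - y_2$ is strictly increasing and vanishes at $0$, hence is nonnegative on $[0,T]$; so if $m_2(s) = 0$ the bound is immediate from $m_1(s) \ge 0$. Otherwise $m_2(s) = \sup_{0<u<s}[-h_2(u)]$, and writing $-h_2 = -h_1 + (y_1 - y_2)$, applying $\sup(a+b) \le \sup a + \sup b$, and using $\sup_{0<u<s}(y_1-y_2)(u) = (y_1-y_2)(s)$ gives $m_2(s) \le \sup_{0<u<s}[-h_1(u)] + (y_1 - y_2)(s) \le m_1(s) + (y_1-y_2)(s)$.

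I would then combine the two. Fix $0 \le s < t \le T$. If $m_1(t) = m_1(s)$ there is nothing to prove, since $m_2$ is nondecreasing. Otherwise the increment formula gives $m_1(t) - m_1(s) = \sup_{s<u<t}[-h_1(u)] - m_1(s)$ and $m_2(t) - m_2(s) \ge \sup_{s<u<t}[-h_2(u)] - m_2(s)$. Since $(y_1-y_2)(u) > (y_1-y_2)(s)$ for $u \in (s,t)$, we get $\sup_{s<u<t}[-h_2(u)] \ge \sup_{s<u<t}[-h_1(u)] + (y_1-y_2)(s)$; plugging this in and then using $-m_2(s) \ge -m_1(s) - (y_1-y_2)(s)$ from the second ingredient cancels the $(y_1-y_2)(s)$ terms and yields $m_2(t) - m_2(s) \ge \sup_{s<u<t}[-h_1(u)] - m_1(s) = m_1(t) - m_1(s)$.

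The crux is the pointwise bound $m_2(s) - m_1(s) \le y_1(s) - y_2(s)$. The naive monotonicity estimate (Lemma~\ref{lemma:SkLemma1}) only yields $m_1 \le m_2$, which says nothing about increments; the whole argument rests on the finer fact that the \emph{discrepancy} between the two reflection terms is itself controlled by the discrepancy $y_1 - y_2$ of the drivers. Once that is in hand, the rest is bookkeeping around $\sup(a+b) \le \sup a + \sup b$ and the increment formula.
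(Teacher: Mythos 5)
Your proof is correct, and it reaches the conclusion by a somewhat different route than the paper, so a comparison is worth recording. Your two ingredients are in fact the same two facts the paper uses, but established differently. First, your pointwise bound $m_2(s)-m_1(s)\le (y_1-y_2)(s)$ is literally equivalent to the paper's first step $x_{f+y_1}\ge x_{f+y_2}$, since $x_{f+y_1}-x_{f+y_2}=(y_1-y_2)-(m_2-m_1)$; the paper proves it by contradiction, stopping at the last zero of $x_{f+y_2}$ before a putative crossing time and invoking flatness plus the time shift of Remark~\ref{re:timeShift}, whereas you get it in one line from the explicit formula of Lemma~\ref{classicSLemma} and subadditivity of the supremum, using that $y_1-y_2$ is increasing and vanishes at $0$. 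Second, for the increment comparison the paper shifts time by $s$, forms the drivers $g_i(t)=x_{f+y_i}(s)+f(t)-f(s)+y_i(t)-y_i(s)$, and applies the monotone comparison Lemma~\ref{lemma:SkLemma1} to $g_1\ge g_2$ (which is where the pointwise comparison enters); you instead use the explicit increment identity
\[
m_i(t)-m_i(s)=\Bigl(\sup_{s<u<t}\bigl[-h_i(u)\bigr]-m_i(s)\Bigr)\lor 0,
\]
bound $\sup_{s<u<t}[-h_2(u)]\ge \sup_{s<u<t}[-h_1(u)]+(y_1-y_2)(s)$ from monotonicity of $y_1-y_2$, and cancel the $(y_1-y_2)(s)$ terms against the pointwise bound. (Your increment identity also follows directly by splitting $\sup_{0<u<t}$ at $s$, without the time-shift detour.) Both arguments ultimately rest on the explicit Skorohod formula; what your version buys is a purely computational proof with no contradiction argument and no need to track zeros of $x_{f+y_2}$, and it makes transparent that only monotonicity of $y_1-y_2$ (not strictness of \eqref{eq:lemmaCond1}) is used. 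What the paper's version buys is that it works directly at the level of the reflected paths and reuses its already-stated comparison lemma, which keeps the supremum bookkeeping hidden inside Lemma~\ref{lemma:SkLemma1}.
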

\begin{proof}
We first show that $x_{f + y_1}(t) \geq x_{f + y_2}(t)$ for all $t \in [0, T].$ That this holds
for $t = 0$ is guaranteed by the assumption on the initial conditions, which implies $x_{f + y_1}(0) =
x_{f + y_2}(0).$ 
% We let $z := x_{f + y_1}(0)$ and prove this in the case $z > 0.$ Since the Skorohod 
% solution is continuous as a function of $z$ the result for $z = 0$ follows from shifting the inital condition 
% by a positive $z$ and letting $z \to 0.$
Assume the converse, that there is some $t^* \in [0, T]$ such that
$x_{f + y_2}(t^*) > x_{f + y_1}(t^*) \geq 0.$
Let 
\[\tau = \sup\{ t < t^* : x_{f + y_2}(t) = 0\}
\] be the last zero of $x_{f + y_2}$ before time $t^*.$ Continuity of $x_{f + y_2}$ implies
$\tau < t^*.$
It follows by definition that $m_{f + y_2}$ is flat on the interval $[\tau, t^*].$ In other words,
\begin{align}
0 = m_{f + y_2}(t^*) - m_{f + y_2}(\tau) \leq m_{f + y_1}(t^*) - m_{f + y_1}(\tau).
\label{eq:sk1}
\end{align}
By shifting the Skorohod solution by time $\tau$ as in Remark \ref{re:timeShift}, using
\eqref{eq:sk1}, the fact that $x_{f + y_1}(\tau) \geq
0 = x_{f + y_2}(\tau)$, and assumption \eqref{eq:lemmaCond1},
\begin{align*}
x_{f + y_1}(t^*) &= x_{f + y_1}(\tau) + f(t^*) - f(\tau) + y_1(t^*) - y_1(\tau) + m_{f + y_1}(t^*)
- m_{f + y_1}(\tau)\\
&\geq x_{f + y_2}(\tau) + f(t^*) - f(\tau) + y_2(t^*) - y_2(\tau) + m_{f + y_2}(t^*) - m_{f + y_2}(\tau)\\
&= x_{f + y_2}(t^*) 
\end{align*}
which contradicts the definition of $t^*.$ Therefore $x_{f + y_1}(t) \geq x_{f + y_2}(t)$ for all
$t \in [0, T].$

For a fixed $s \in [0, T]$ let \[
g_i(t) = x_{f + y_i}(s) + f(t) - f(s) + y_i(t) - y_i(s) \text{ for } s \leq t \leq T,
\] and $i = 1, 2.$ The assumption \eqref{eq:lemmaCond1} on $y_i$ together with the fact that $x_{f +y_1}
 \geq x_{f + y_2}$ imply $g_1(t) \geq g_2(t).$ Apply Lemma \ref{lemma:SkLemma1} to $g_1, g_2$ and 
 shift time by $s$ as in Remark \ref{re:timeShift} to see
\begin{align*}
m_{f + y_1}(t) - m_{f + y_1}(s) = m_{g_1}(t - s) \leq m_{g_2}(t - s) = m_{f + y_2}(t) - m_{f + y_2}(s),
\end{align*}
proving the result.
\end{proof}

% We construct a generalization of the Skorohod map for $f = (f_1, \dots, f_n) \in C([0, T], \R^n)$, $v
%  \in \R$ and $K \geq 0$. If any $f_i(0) < 0,$ the velocity of our 
% corresponding inert particle will immediately receive a negative jump of $\frac{1}{n}\sum_{i=1}^
% n(f_i(0) \land 0)$. Therefore by allowing any initial real velocity we may assume without loss of generality that $f_i(0)$ are nonnegative.

\begin{theorem}\label{theorem:existence_unique_Smap}
Corresponding to each $f = (f_1, \cdots, f_n) \in C([0, T], \R^n)$, with $f_i(0) \geq 0$, and $v \in \R, K \geq 0$ is a pair of continuous functions 
\[
(I^{(n)}_f(t), V^{(n)}_f(t)) =: \Gamma_nf(t) \in C([0, T], \R^{2})
\]
satisfying
	\begin{align}
		&x_i(t) := f_i(t) + I^{(n)}_f(t) + m_i(t) \geq 0,\label{eq:sMap1}\\
		&m_i(t) \text{ is flat off } \{t : x_i(t) = 0\},\label{eq:sMap2}\\
		&V^{(n)}_f(t) = -v + \frac{K}{n}\sum_{i=1}^nm_i(t), \ v \in \R,\label{eq:sMap3}\\
		&I^{(n)}_f(t) = \int_0^tV^{(n)}_f(s)\, \mathrm{d}s,
		\label{eq:sMap4}
	\end{align}
for all $t \in [0, T].$
\label{prop:existence}
\end{theorem}
\begin{remark}
It follows from the classical Skorohod equation that
\[
\ds m_i(t) = \sup_{0 \leq s < t}\big(f_i(s) + I^{(n)}(s)\big)^-.
\]
This is used in the proof of Proposition \ref{prop:eqInLaw} below.
\label{remark: sMap}
\end{remark}
\begin{proof}
\emph{Uniqueness:}
We prove a continuity estimate which holds for any solutions of \eqref{eq:sMap1} - \eqref{eq:sMap4}. Assume that \eqref{eq:sMap1} - \eqref{eq:sMap4} hold for two functions
$f = (f_1, \dots, f_n), g = (g_1, \dots, g_n) \in C([0, T], \R^n)$ and let $(I_f^{(n)}, V_f^{(n)}), (I_g^{(n)}, V_g^{(n)})$ denote the pairs corresponding to \eqref{eq:sMap3} and \eqref{eq:sMap4} for $f$ and $g$, respectively. We are assuming such solutions exist for $f, g.$
By Remark \ref{remark: sMap}, $m_i^f(t)$ is the running minimum of $f_i + I^{(n)}_f$ below zero 
until time $t$, and the same holds for $m_i^g(t)$. Hence 
\begin{align}
\|m_i^f - m_i^g\|_{[0, t]} \leq \|(f_i + I_f^{(n)}) - (g_i + I_g^{(n)})\|_{[0, t]}.
\label{eq:boundMax}
\end{align}
By the triangle inequality, \eqref{eq:sMap3}, \eqref{eq:sMap4}, and \eqref{eq:boundMax}
\begin{align}
\begin{split}
\alpha(t) &:= \sum_{i=1}^n\|(f_i + I_f^{(n)}) - (g_i + I_g^{(n)})\|_{[0, t]}\\
&\leq \, \sum_{i=1}^n\Big(\|f_i - g_i\|_{[0, t]}\Big) + n\|I_f^{(n)} - I_g^{(n)}\|_{[0, t]}\\
&\leq \|f - g\|_{[0, t]} \, + \, K\int_0^t\sum_{i=1}^n|m_i^f(s) - m_i^g(s)|\, \md s\\
&\leq \|f - g\|_{[0, t]} \, + K\int_0^t\sum_{i=1}^n\|m_i^f - m_i^g\|_{[0, t]}\\
&\leq \|f- g\|_{[0, t]} \, + K\int_0^t\alpha(s)\, \mathrm{d}s.
\end{split}
\end{align}
Now apply Gr$\text{\"{o}}$nwall's inequality to obtain
\begin{align*}
\alpha(t) \leq \|f - g\|_{[0, t]}\exp( Kt).
\end{align*}
Consequently, 
\begin{align}
\begin{split}
&\|V_f^{(n)} - V_g^{(n)}\|_{[0, t]} \leq \, \frac{K}{n}\sum_{i=1}^n|m_i^f(t) - m_i^g(t)| \leq \, \frac{K}{n}\alpha(t) \\&\leq \, \frac{K\|f - g\|_{[0, t]}}{n}\exp( Kt).
\label{eq:ineqVelocity}
\end{split}
\end{align}
This holds for any $f, g$ and any two pairs $(I_f^{(n)}, V_f^{(n)}), (I_g^{(n)}, 
V_g^{(n)})$ solving the equations \eqref{eq:sMap1} - \eqref{eq:sMap4}. Taking $g = f$ in
\eqref{eq:ineqVelocity} shows $(I_f^{(n)}, V^{(n)}_f)$ is unique, and $\Gamma_n$ is well defined assuming solutions to \eqref{eq:sMap1} - \eqref{eq:sMap4} exist.
\\
\\
\begin{remark}
The case $n=1$ is in \cite{white2007}.
\end{remark}
\noindent
\emph{Existence:} 
To demonstrate existence, we approximate with the processes 
$I^{(n)}_f, V^{(n)}_f$
that define the map $\Gamma_n$. Informally, 
we break the interval $[0, T]$ into small intervals of size $\e$ and construct $I_f^{(n, \e)}$ by
updating its velocity $V_f^{(n,\e)}$ every $\e$ step. We do this by letting
 the average minimum of $I_f^{(n, \e)} + f_i,$ $i = 1, \dots, n,$ accumulate between the steps of size
 $\e$ and adding an appropriate proportion (depending on $K, n$) of this accumulated amount to the velocity at the end of each step.
For a fixed $\e > 0,$ define the functions $I^\e_{M\e}, V^\e_{M\e}$ recursively 
in the intervals $[0, \e], [\e, 2\e], \dots, [(M-1)\e, M\e]$ as follows. 

% The subscripts record how
% many linear adjustments have been made, this will be used later.
\begin{enumerate}
	\item On the interval $[0, \e],$ let $I^\e_{\e}(t) = vt$ and  $V^\e_{\e} = v.$ 
	% For the next interval $[\e, 2\e],$ let $V^\e_{2\e} = 0$ 0n $[0, \e],$ while on $[\e, 2\e]$ let it be the average of the running minimum of the $f_i$ during the first $\e$ of time. On $[0, 2\e]$ let $I^{(n,\e)}_{2\e}$ linearly extend 
	% $I^\e_\e$ to be a line with slope $V^\e(\e)$ on $[\e , T].$\\
	\item Assume we are given $I^\e_{M\e}, V^\e_{M\e}.$
	Let
	\[
	\left.I^\e_{(M+1)\e}\right|_{[0, M\e)} = I^\e_{M\e} \text{ and } 
	\left.V^{\e}_{(M+1)\e}\right|_{[0, M\e)} = V^\e_{M\e}.
	\]
	For $t \in [M\e, (M+1)\e)$ let 
	\[
	V^\e_{(M+1)\e}(t) = \frac{K}{n}\sum_{i=1}^n\max_{0 \leq u \leq M\e}\big(f_i(u) + I^\e_{M\e}(u)\big)^-
	\]
	be the average of the running minimum below zero of $f_i + I^\e_{M\e}$ until time $M\e.$ Notice that $V^\e_{(M+ 1)\e}$ is piecewise constant on subintervals
	of $[0, T]$ of the form $[j\e, (j + 1)\e), j \in \N.$
	\item Extend
	$I^\e_{(M+1)\e}$ to $[M\e, (M+1)\e)$ linearly by giving it slope $V^\e_{(M+1)\e}.$

	\item Set $I^{(n, \e)}_f, V^{(n, \e)}_f$ as the functions produced once the recursion
	covers the interval $[0, T].$ This occurs when $M$ reaches $\lceil T/\e \rceil.$
\end{enumerate}
A couple observations follow easily from this construction. First,
\[
I^{(n, \e)}(t) = \int_0^t V^{(n, \e)}(s)\, \mathrm{d}s.
\]
Second, $V^{(n, \e)}$ is monotonically increasing, and $I^{(n, \e)}$ is differentiable and convex. By construction
\[
\|V^{(n ,\e)}\|_{[0, T]} \leq |v|T + \frac{K}{n}\sum_{i=1}^n\max_{0 \leq u \leq T}\big(f_i(u)\big)^- < \infty,
\]
for every $\e > 0,$ and therefore $\{\|V^{(n, \e)}\|_{[0, T]} : \e > 0\}$ is a bounded set.
Consequently the collection $\{I^{(n, \e)} : \e > 0\}$ is uniformly Lipschitz,
and since $I^{(n, \e)}(0) = 0$ for all $\e > 0$ it is pointwise bounded as well.
Hence the family $\{I^{(n, \e)} : \e > 0\}$ satisfies the Arzel$\grave{\text{a}}$-Ascoli criterion. 
By taking a subsequence $\e_k \to 0$
there is a continuous function $I^{(n)}$ such that
\[
\int_0^tV^{(n, \e_k)}(s)\, \mathrm{d}s =: I^{(n, \e_k)}(t) \lra I^{(n)}(t)
\]
uniformly for $t$ in $[0, T]$. By the construction of $V^{(n, \e_k)},$ this implies
\begin{align*}
V^{(n, \e_k)}(t) &= \frac{K}{n}\sum_{i=1}^n\max_{0 \leq u \leq \lfloor t/\e_k \rfloor \e_k}
\big(f_i(u) + I^{(n, \e_k)}(u)\big)^-\\
&\lra \frac{K}{n}\sum_{i=1}^n\max_{0 \leq u \leq t}\big(f_i(u) + I^{(n)}(u)\big)^-
\end{align*}
uniformly for $t$ in $[0, T]$, as $\e_k \to 0.$ Set 
\[
m_i(t) = \max_{0 \leq u \leq t}\big(f_i(u) + I^{(n)}(u)\big)^-,
\] so that
\[
V^{(n)}(t) = v + \frac{K}{n}\sum_{i=1}^nm_i(t).
\]
We know $m_i$ is flat off $\{ s : f_i(s) + I^{(n)}(s) + m_i(s) = 0\}$ by Lemma \ref{classicSLemma}. 
By the dominated convergence theorem,
\[
I^{(n)}(t) = \int_0^tV^{(n)}(s)\, \mathrm{d}s,
\]
and clearly $f_i(t) + I^{(n)} + m_i(t) \geq 0.$
Therefore $(I^{(n)}, V^{(n)})$ satisfy the equations \eqref{eq:sMap1}--\eqref{eq:sMap4}.
\end{proof}
\noindent
Note that \eqref{eq:ineqVelocity} implies the map $g \mapsto V_g^{(n)}$
is Lipschitz as a map between function spaces $C([0, T], \R^n) \to C([0, T], \R)$ with 
Lipschitz constant $(K/n)\exp(KT).$ Recall

\begin{prop}(Lipschitz property of $V^{(n)}$)
For any
$v \in \R, K \geq 0$, take $f, g \in C([0, T], \R^n)$. We have
\begin{align}
\|V^{(n)}_f - V^{(n)}_g\|_{[0, T]} \leq (K\| f - g\|_{[0, T]}/n)\exp({KT)}, \label{eq:bound1}
\end{align}
and consequently
\begin{align}
\|I^{(n)}_f - I^{(n)}_g\|_{[0, T]} \leq (K\| f - g\|_{[0, T]}/n)T\exp({KT)}. \label{eq:bound2}
\end{align}
\label{prop:contV}
\end{prop}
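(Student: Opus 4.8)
The plan is to recognize that Proposition~\ref{prop:contV} is essentially a restatement of the estimate \eqref{eq:ineqVelocity} already obtained during the uniqueness portion of the proof of Theorem~\ref{prop:existence}, combined with one further integration for the bound on $I^{(n)}$. So the proof is short. First I would recall that, by Theorem~\ref{prop:existence}, the pairs $(I^{(n)}_f, V^{(n)}_f)$ and $(I^{(n)}_g, V^{(n)}_g)$ exist and are the unique solutions of \eqref{eq:sMap1}--\eqref{eq:sMap4} associated to $f$ and $g$ respectively; hence the Gr\"onwall argument carried out there applies verbatim to these canonical solutions. That argument yields $\alpha(t) \le \|f - g\|_{[0,t]}\exp(Kt)$, where $\alpha(t) = \sum_{i=1}^n \|(f_i + I^{(n)}_f) - (g_i + I^{(n)}_g)\|_{[0,t]}$, and therefore \eqref{eq:ineqVelocity} gives $\|V^{(n)}_f - V^{(n)}_g\|_{[0,t]} \le (K/n)\alpha(t) \le (K\|f-g\|_{[0,t]}/n)\exp(Kt)$.

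To get \eqref{eq:bound1}, I would set $t = T$ and use the hypothesis $\|f - g\|_{[0,T]} < \eta$, together with monotonicity of $t \mapsto \|f-g\|_{[0,t]}$, to conclude $\|V^{(n)}_f - V^{(n)}_g\|_{[0,T]} \le (K\eta/n)\exp(KT)$. For \eqref{eq:bound2}, I would use \eqref{eq:sMap4}, which says $I^{(n)}_f(t) = \int_0^t V^{(n)}_f(s)\,\md s$ and likewise for $g$, so that for every $t \in [0,T]$,
\[
|I^{(n)}_f(t) - I^{(n)}_g(t)| \le \int_0^t |V^{(n)}_f(s) - V^{(n)}_g(s)|\,\md s \le \int_0^T \|V^{(n)}_f - V^{(n)}_g\|_{[0,T]}\,\md s = T\,\|V^{(n)}_f - V^{(n)}_g\|_{[0,T]},
\]
and then take the supremum over $t \in [0,T]$ and insert the bound \eqref{eq:bound1} just proved. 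This gives $\|I^{(n)}_f - I^{(n)}_g\|_{[0,T]} \le (K\eta/n)T\exp(KT)$, as claimed.

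There is no real obstacle here; the only thing to be careful about is making sure that the estimate \eqref{eq:ineqVelocity} is being applied to the genuine Skorohod-map outputs $\Gamma_n f$ and $\Gamma_n g$ rather than to arbitrary hypothetical solutions — but since uniqueness was established in Theorem~\ref{prop:existence}, this is automatic, and in fact the derivation of \eqref{eq:ineqVelocity} was explicitly stated to hold ``for any $f,g$ and any two pairs solving the equations.'' Thus the proposition is just the packaging of an inequality already in hand plus a one-line integration, and I would present it exactly that way.
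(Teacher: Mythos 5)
Your proposal is correct and follows the paper's own argument exactly: the paper also obtains \eqref{eq:bound1} by citing the Gr\"onwall estimate \eqref{eq:ineqVelocity} from the uniqueness part of Theorem~\ref{prop:existence}, and then derives \eqref{eq:bound2} by the same one-line integration of $V^{(n)}_f - V^{(n)}_g$ via \eqref{eq:sMap4}. Nothing is missing.
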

% \begin{remark}
% By definition
% $$\ds \frac{1}{n}\|f - g\|_{[0, T]}= \frac{1}{n}\sum_{i=1}^n\|f_i -g_i\|_{[0, T]}
% $$
% is the average distance between the $f_i, g_i.$ Proposition \ref{prop:contV}
% says that if this average distance is small, the difference in the drifts 
% $V^{(n)}_g, V^{(n)}_f$ is small as well.
% \end{remark}
\begin{proof}
The first bound \eqref{eq:bound1} was shown as \eqref{eq:ineqVelocity}. Notice \eqref{eq:bound1} implies \eqref{eq:bound2}, since
\begin{align*}
\|I^{(n)}_f - I^{(n)}_g\|_{[0, T]} &= \sup_{0 \leq u \leq T}|\int_0^uV^{(n)}_f(s) - V^{(n)}_g(s)\, \mathrm{d}s|\\
&\leq \sup_{0 \leq u \leq T}\int_0^u|V^{(n)}_f(s) - V^{(n)}_f(s)|\, \mathrm{d}s\\
&\leq \int_0^T\|V^{(n)}_f - V^{(n)}_g\|_{[0, T]}\, \mathrm{d}s\\
&\leq T(K\| f - g\|_{[0, T]}/n)\exp(KT).
\end{align*}
\end{proof}

\noindent
Consider the sequence $I^{(n,\e)}_f$ for a given $n$ and $f = (f_
1,\dots, f_n) \in \R^n$ defined by 2.1-2.4 in the proof of Theorem \ref{theorem:existence_unique_Smap}. By Proposition \eqref{prop:contV}, $I^{(n, \e)}_f$ 
converges in the 
uniform norm on $C([0, T], \R)$ to a unique continuous function as $\e \to 0$. The 
Proposition below says this rate of convergence only depends on $K, T,$ and $\|f\|_{[0, T]}.$

\begin{prop}
Consider the sequence $I^{(n,\e)}_f$ defined in the proof of Theorem \ref{theorem:existence_unique_Smap}. Let $f = (f_1,\dots, f_n) \in C([0, T], \R^n)$. If $l < m$, then 
$$
\|I^{(n, 2^{-l})}_f - I^{(n, 2^{-m})}_f\|_{[0, T]}  \leq ((2+K)\|f\|_{[0, T]}/n)2^{-l}\exp(KT).
$$
\label{prop:cauchy}
\end{prop}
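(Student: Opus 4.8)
The plan is to compare each $I^{(n,\e)}_f$ against the true limit $I^{(n)}_f$, and then combine two such estimates via the triangle inequality. So the first task is to produce a bound of the form $\|I^{(n,\e)}_f - I^{(n)}_f\|_{[0,T]} \le C(f,n)\,\e$ with $C(f,n) = ((2+K)\|f\|_{[0,T]}/n)\exp(KT)$ (or something absorbable into that constant).

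To get such a bound, I would run a Grönwall argument in the same spirit as the uniqueness proof of Theorem~\ref{prop:existence}. Fix $\e = 2^{-l}$. On each subinterval $[M\e,(M+1)\e)$ the discrete velocity $V^{(n,\e)}_f$ is frozen at the value $\frac{K}{n}\sum_i \max_{0\le u\le M\e}(-[f_i(u)+I^{(n,\e)}_f(u)]\vee 0)$, whereas the true velocity $V^{(n)}_f(t) = \frac{K}{n}\sum_i \max_{0\le u\le t}(-[f_i(u)+I^{(n)}_f(u)]\vee 0)$ moves continuously. The discrepancy between the frozen argument $M\e$ and the current time $t$ contributes an error controlled by the oscillation of $f_i + I^{(n,\e)}_f$ over a window of length $\e$; since $I^{(n,\e)}_f$ is Lipschitz with constant at most $|v|T + \frac{K}{n}\sum_i \|f_i\|_{[0,T]}$ (bounded, from the existence proof), the $I$-part of that oscillation is $O(\e)$, and the $f$-part can be absorbed since continuity of $f$ on $[0,T]$ gives a modulus — but to get a \emph{clean} constant depending only on $\|f\|_{[0,T]}$ one actually wants to bound $\max_{0\le u\le t}(-[f_i(u)+I^{(n,\e)}(u)]\vee 0) - \max_{0\le u\le M\e}(\cdots)$ directly by the running-max difference over the extra time, which is at most $\sup_{M\e\le u\le t}[-(f_i(u)+I^{(n,\e)}(u))]\vee 0$ minus the earlier max; the key point is that the running max is monotone, so this increment is itself controlled, and after summing in $i$ and using $\|f\|_{[0,T]}$ as a crude envelope one gets the factor $(2+K)\|f\|_{[0,T]}/n$. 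Then setting $\beta(t) := \|(f+I^{(n,\e)}_f) - (f+I^{(n)}_f)\|_{[0,t]} = \|I^{(n,\e)}_f - I^{(n)}_f\|_{[0,t]}$, one derives $\beta(t) \le (\text{const})\cdot\e + K\int_0^t \beta(s)\,ds$ exactly as in the uniqueness argument (using that the running-minimum map is a contraction, Remark~\ref{remark: sMap}, and $I^{(n,\e)}(t)=\int_0^t V^{(n,\e)}(s)ds$), and Grönwall yields $\beta(T) \le (\text{const})\,\e\,\exp(KT)$.

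Once $\|I^{(n,2^{-l})}_f - I^{(n)}_f\|_{[0,T]} \le ((2+K)\|f\|_{[0,T]}/(2n))2^{-l}\exp(KT)$ is in hand (the extra factor $1/2$ gives headroom for the triangle inequality), the conclusion is immediate: for $l < m$,
\[
\|I^{(n,2^{-l})}_f - I^{(n,2^{-m})}_f\|_{[0,T]} \le \|I^{(n,2^{-l})}_f - I^{(n)}_f\|_{[0,T]} + \|I^{(n)}_f - I^{(n,2^{-m})}_f\|_{[0,T]},
\]
and since $2^{-m} \le 2^{-l}$ both terms are bounded by $((2+K)\|f\|_{[0,T]}/(2n))2^{-l}\exp(KT)$, so their sum is at most $((2+K)\|f\|_{[0,T]}/n)2^{-l}\exp(KT)$, as claimed.

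\textbf{Main obstacle.} The delicate step is the first one: estimating $\|I^{(n,\e)}_f - I^{(n)}_f\|_{[0,T]}$ by a constant times $\e$ with the constant depending on $f$ only through $\|f\|_{[0,T]}$ (not through its modulus of continuity). The issue is that the discrete scheme freezes the velocity over windows of length $\e$, and one must show the resulting error in the running-minimum terms is $O(\e)$ uniformly — the natural bound involves $\omega(f_i,\e)$, and converting this to something depending only on $\sup|f_i|$ requires using the monotonicity of the running-max/running-min operators rather than a naive oscillation estimate, together with the a priori Lipschitz bound on $I^{(n,\e)}_f$. Getting the bookkeeping on the constant to land exactly on $(2+K)\|f\|_{[0,T]}/n$ is where the care is needed; everything after that is the routine Grönwall-plus-triangle-inequality combination already used for uniqueness in Theorem~\ref{prop:existence}.
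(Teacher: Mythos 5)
Your route is legitimate and genuinely different from the paper's. The paper never invokes the limit object: it compares the two discretizations $I^{(n,2^{-l})}_f$ and $I^{(n,2^{-m})}_f$ directly, by induction over the coarse grid, tracking $D(k)=\|I^{(n,2^{-l})}_f-I^{(n,2^{-m})}_f\|_{[0,k2^{-l}]}$, deriving $D(k+1)\le D(k)+KD(k)2^{-l}+K\beta_k2^{-l}$ where $\beta_k$ is the increment of the finer scheme's averaged running minimum over the $k$-th coarse window, and using the telescoping bound $\sum_k\beta_k\le\|f\|_{[0,T]}/n$ before compounding (a discrete Gr\"onwall). You instead compare each scheme to the exact solution $I^{(n)}_f$ and then use the triangle inequality; this is not circular, because Theorem \ref{prop:existence} already provides existence and uniqueness of $I^{(n)}_f$ and Remark \ref{remark: sMap} gives the running-max formula for its velocity. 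The working parts of your sketch are exactly the paper's two ingredients transplanted to a continuous Gr\"onwall: the $1$-Lipschitz (contraction) property of the running-max map, as in the uniqueness estimate behind Proposition \ref{prop:contV}, to absorb the discrepancy between $I^{(n,\e)}_f$ and $I^{(n)}_f$ inside the max, and the monotone, telescoping window increments of the running max of $f_i+I^{(n)}_f$, whose total is at most $\|f\|_{[0,T]}/n$, to produce the $O(\e)$ forcing term (your earlier attempt via the modulus of continuity of $f$ is indeed a dead end, and you rightly abandon it). What your route buys is a rate of convergence of $I^{(n,\e)}_f$ to $I^{(n)}_f$, which is really what Proposition \ref{prop:uniquenessSubLimit} needs; what the paper's route buys is a genuine Cauchy estimate that is independent of the existence proof.

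The one concrete gap is the constant. Carrying out your Gr\"onwall gives, per comparison, a forcing term of size $K\e\cdot\frac1n\sum_i\sup_{0\le u\le T}[-(f_i(u)+I^{(n)}_f(u))]\lor 0\le K\e\|f\|_{[0,T]}/n$, hence $\|I^{(n,\e)}_f-I^{(n)}_f\|_{[0,T]}\lesssim (K\|f\|_{[0,T]}/n)\,\e\,\exp(KT)$, so the triangle inequality lands on a combined constant of order $2K$ (or $\tfrac32K$, using $2^{-m}\le 2^{-l-1}$), not $2+K$. Your claimed headroom of a factor $\tfrac12$ per side, i.e.\ a per-side constant $(2+K)/2=1+K/2$, is asserted without derivation and is smaller than the natural constant $K$ whenever $K>2$; you yourself flag the constant bookkeeping as the delicate point, and as written it is not closed. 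For the paper's later use this is immaterial---only a bound of the form $C(K,T)\|f\|_{[0,T]}2^{-l}/n$, uniform in $n$, is needed---but to prove the proposition exactly as stated you would either have to sharpen the one-sided estimate or accept a modified constant.
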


\begin{proof}
The proof is in a similar vein as that of Proposition \ref{prop:contV}. We show bounds
between $I^{(n, 2^{-j})}_f$ and $I^j_{k2^{-j}}$ by recursively stepping through the intervals
where we update the velocity, as described informally in the beginning of the existence section. 
With these recursive bounds we attain an exponential bound over the entire interval $[0, T].$ We make some abbreviations in our notation. For $j = l, m$ we will write $I^j$ in place of
$I^{(n, 2^{-j})}_f,$ and $I^j_{k2^{-j}}$ in place of $I^{(n, 2^{-j})}_{f, k2^{-l}}.$
Recall $I^{l}$ is piecewise linear by definition. For fixed $l < m$, define
% $$PICTURE \ DESCRIBING  \ I^l_{N2^{-l}}$$
\begin{align*}
D(k) &:= \sup_{0 \leq t \leq k2^{-l}}|I^l(t) - I^m(t)| = \frac{1}{n}\sum_{i=1}^n\sup_{0 \leq t \leq k2^
{-l}}|f_i(t) + I^l(t) - (f_i(t) + I^m(t))|\\ 
&= \frac{1}{n}\|(f + I^l) - (f + I^m)\|_{[0, k2^{-l}]}.
\end{align*}
We develop bounds for $D(k)$ using a recursive argument. By construction $I^l \equiv 0$ on $[0, 2^{-l}].$ Due to nonegativity of
$I^m$, for any $t \in [0, T]$
\begin{align*}
|V^{(n, 2^{-m})}(t)| &\leq \frac{K}{n}\sum_{i=1}^n\sup_{0 \leq u \leq T}\big(f_i(u) + I^m\big)^-\\
&\leq \frac{K}{n}\sum_{i=1}^n\sup_{0 \leq u \leq T}\big(f_i(u)\big)^-\\
&\leq \frac{K}{n}\|f\|_{[0, T]}.
\end{align*}
Therefore 
$I^m$ is piecewise linear with a slope not exceeding 
$K\|f\|/n,$ and so
\begin{align}
D(1) \leq (K\|f\|_{[0, T]}/n)2^{-l}.
\label{eq:D(1)InitialBound}
\end{align}
Assume we are given $D(k)$. We wish to bound the difference
between $I^l$ and $I^m$ on the interval $[0, (k+1)2^{-l}].$ We know
$\left. I^l\right|_{[0, k2^{-l}]}= I^l_{k2^{-l}}$ and $\left. I^m\right|_{[0, k 2^{-l}]} = I^m_{k2^{-l}}$.
Similarly the function $I^l$ has constant slope on $[k2^{-l}, (k+1)2^{-l}),$
with its slope adjustment being at the end of this interval at $(k+1)2^{-l}
;$ so $I^l_{k2^{-l}} = I^l_{(k+1)2^{-l}}$ on $[k2^{-l}, (k+1)2^{-l}].$
On the other hand, $I^m$ has a slope adjustment at each time $k2^{-l} + 2^{-m}, k2^{-l} +
2^{-m + 1},  \dots, (k+1)2^{-l}.$ Note that the difference in the slope between
$I^l, I^m$ at time $k2^{-l}$ is not more than $D(k),$ so that
\begin{align*}
\|I^l_{(k + 1)2^{-l}} - I^m_{k2^{-l}}\|_{[0, (k+1)2^{-l}]} &\leq \|I^l_{(k + 1)2^{-l}} - I^m_{k2^{-l}}\|_{[0, k2^{-l}]} + KD(k)2^{-l}\\
&= \|I^l_{k2^{-l}} - I^m_{k2^{-l}}\|_{[0, k2^{-l}]} + KD(k)2^{-l}\\
&= D(k) + KD(k)2^{-l}.
\end{align*}
By this and the triangle inequality, 
\begin{align}
\begin{split}
\label{eq:cauchyBound}
&D(k+1) = \|I^l_{(k + 1)2^{-l}} - I^m_{(k+1)2^{-l}}\|_{[0, (k+1)2^{-l}]}\\
&\leq \|I^l_{(k + 1)2^{-l}} - I^m_{k2^{-l}}\|_{[0, (k+1)2^{-l}]} \ + \ \|I^m_{k2^{-l}} - I^m_{(k+1)2^{-l}}\|_{[0,(k+1)2^{-l}]}\\
&\leq D(k) + KD(k)2^{-l} + \|I^m_{k2^{-l}} - I^m_{(k+1)2^{-l}}\|_{[0,(k+1)2^{-l}]}.
\end{split}
\end{align}
Let 
$$
\ds \beta_k = \frac{1}{n}\sum_{i=1}^n\sup_{0 \leq t \leq (k+1)2^{-l}}\big(f_i(t) + I^m(t)\big)^- - 
\frac{1}{n}\sum_{i=1}^n\sup_{0 \leq t \leq k2^{-l}}\big(f_i(t) + I^m(t)\big)^-,
$$
so that $K\beta_k$ is the amount the velocity $I^m$ increases in the interval 
$[k2^{-l}, (k+1)2^{-l}].$
From this telescoping definition of $\beta_k$ we see that 
\begin{align}
\sum_{k = 0}^{\lceil T2^l\rceil}\beta_k = \frac{1}{n}\sum_{i=1}^n\sup_{0 \leq s \leq T}\big(f(s) + I^m(s)\big)^- \leq \frac{1}{n}\|f\|_{[0, T]}.
\label{eq:betaBound}
\end{align}
Combine this with \eqref{eq:cauchyBound} above, we have
\begin{align}
\begin{split}
D(k+1) &\leq D(k) + KD(k)2^{-l} + K\beta_k2^{-l}\\
&= D(k) + K(D(k) + \beta_k)2^{-l}.
\end{split}
\label{eq:cauchyBound2}
\end{align}

\noindent
Set $A(k)$ to be recursively defined with the above inequality taken as 
equality. That is,
 \[
 A(k+1) = A(k) + KA(k)2^{-l} + K\beta_k2^{-l}.
 \]
We have $D(k) \leq A(k)$ for all $k.$ Note that $A(k)$ is maximized when
all the mass of $\sum_1^{\lceil T2^l\rceil}\beta_k$ is concentrated at $\beta_1$ because this 
allows the entire mass to be compounded from the beginning. Since the total sum of the $\beta_k$
does not exceed $\|f\|_{[0, T]}/n$, 
\begin{align*}
A(1) &= (K\|f\|_{[0, T]}/n)2^{-l},\\
A(2) &= A(1)(1 + K2^{-l}) + K\sum_{k = 1}^{2^l}\beta_k2^{-l}\\
&\leq A(1)(1 + K2^{-l}) + (K\|f\|_{[0, T]}/n)2^{-l},\\
A(k +& 1) = A(k)(1 + K2^{-l}).
\intertext{Such a recursion has an exponential bound:}
A(\lceil T2^l \rceil) &\leq (A(1) + A(2))(1 + K2^{-l})^{\lceil T2^l \rceil} \leq ((2+K)\|f\|_{[0, T]}/n)2^{-l}\exp{KT}.
\intertext{Hence,}
D(\lceil T2^{-l} \rceil) &\leq A(\lceil T2^l \rceil) \leq ((2+K)\|f\|_{[0, T]}/n)2^{-l}\exp{KT},
\end{align*}
which concludes the result.
% Since the slope of $I's$ is the average running minimum obtained by $f + I,$
% we automatically see that $D_k$ is a bound on the difference in slope. $I^{(n, 2^{-l})}$
% will remain linear in the interval $[k2^{-l}, (k+1)2^{-l}]$ whereas $I^{(n, 2^{-m})}$ 
% will have slope readjustments hereafter at $k2^{-l} + N2^{-m}.$ Using an inductive 
% argument for $N$ on the subintervals $[k2^{-l}, k2^{-l} + N2^{-m}]$ very
% similar to the one in \ref{prop:contV}, we see that
% the maximum difference in slope between $I^{(n, 2^{-l})}$ and $I^{(n, 2^{-m})}$
% is at most 
% $\ds (\beta_k/n)\exp 2^{-l} \approx (\beta_k/n)(1 + 2^{-l} + o(2^{-l})),$ where $\beta_k 
% = \sum_{i=1}^n\sup_{t \leq (k+1)2^{-l}}[(-f_i(t) + I^{(n, 2^{-m})}) \lor 0]
% - \sum_{i=1}^n\sup_{t \leq k2^{-l}}[(-f_i(t) + I^{(n, 2^{-m})}) \lor 0]
% $
% is taken before the adjustments to $I^{(n, 2^{-m})}$ are made in the interval,
% which is itself a bound on maximum increase of the slope of $I^{(n, 2^{-m})}$. By the
% triangle inequality 
% $$
% D_{k+1} \leq D_k + D_k2^{-l} + (\beta_k/n)2^{-l} + (\beta_k/n)o(2^{-l})
% .$$

%  Since $\sum_{k=1}^{T2^{l}}\beta_k/n \leq \|f\|/n$, (letting $m \to \infty$) this gives an exponential bound
% on $D_{2^l}$ of the form 
% $$
% D_{2^l} \leq (\|f\|/n)\exp T
% $$
% which proves the result.
\end{proof}

% \begin{cor}
% The $V^{(n)}(t)$ defined by \eqref{eq:sysLaw} is the same as
% $V^{(n)}(B^{(1)}, \dots, B^{(n)}).$
% \end{cor}
\noindent
To construct our system \eqref{eq:sysLaw} in the introduction from 
Proposition~\ref{prop:existence}, we apply the map $\Gamma_n$ pathwise
with 
\[
(f_1, \dots, f_n) = (B^{(1)} + X^{(n)}_1(0), \dots, B^{(n)} + X^{(n)}_n(0)) =: \vec{B} + \vec{X}(0),
\]
producing the pair of processes 
\[
\Gamma_n\,(\vec{B} + \vec{X}(0)) = \left(I^{(n)}_{\vec{B} + \vec{X}(0)}, \wt{V}^{(n)}_{\vec{B}+\vec{X}(0)}\,\right).
\]
Set 
\begin{align}
X^{(n)}_i = X^{(n)}_i(0) + B^{(i)} + m_i^{(n)}, \ V^{(n)} = -\wt{V}^{(n)}_{\vec{B} + \vec{X}(0)}
, \ Y^{(n)} = -I^{(n)}_{\vec{B} + \vec{X}(0)}.
\label{eq:YandVMap}
\end{align}
Then
\begin{prop}\label{prop:ex}
$(X_1^{(n)},\dots, X_n^{(n)}, Y^{(n)}, V^{(n)})$ satisfies \eqref{eq:sysLaw},
therefore giving a strong solution to that system of SDE's.
\label{prop:eqInLaw}
\end{prop}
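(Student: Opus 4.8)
\emph{Plan.} The plan is to read \eqref{eq:sysLaw} as a transcription of equations \eqref{eq:sMap1}--\eqref{eq:sMap4} of Theorem~\ref{prop:existence} under the substitutions \eqref{eq:YandVMap}, once one declares $L^{(n)}_i:=m^{(n)}_i$. First I would work on the full-measure event that $B^{(1)},\dots,B^{(n)}$ have continuous paths, apply $\Gamma_n$ pathwise to $f=(f_1,\dots,f_n)$ with $f_i:=X^{(n)}_i(0)+B^{(i)}$ (note $f_i(0)=X^{(n)}_i(0)\ge 0$ by hypothesis, so Theorem~\ref{prop:existence} applies), and obtain $(I^{(n)}_f,\wt{V}^{(n)}_f)$ together with the reflection terms $m_i=m^{(n)}_i$ of Remark~\ref{remark: sMap}. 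With $X^{(n)}_i,V^{(n)},Y^{(n)}$ given by \eqref{eq:YandVMap} and $L^{(n)}_i:=m^{(n)}_i$, I would then check the four requirements of \eqref{eq:sysLaw} one at a time: \eqref{eq:sMap1} gives $X^{(n)}_i-Y^{(n)}=f_i+I^{(n)}_f+m_i=x_i\ge 0$; \eqref{eq:sMap2} says $m_i$ is flat off $\{x_i=0\}=\{X^{(n)}_i=Y^{(n)}\}$, and Lemma~\ref{classicSLemma} gives that $m_i$ is continuous, nondecreasing, with $m_i(0)=0$; the definition $X^{(n)}_i=X^{(n)}_i(0)+B^{(i)}+m^{(n)}_i$ gives $dX^{(n)}_i=dB^{(i)}+dL^{(n)}_i$; and \eqref{eq:sMap3}--\eqref{eq:sMap4} combined with \eqref{eq:YandVMap} give $Y^{(n)}(t)=-I^{(n)}_f(t)=\int_0^t\big(v-\tfrac{K}{n}\sum_i m_i(s)\big)\,ds=\int_0^t V^{(n)}(s)\,ds$ with $V^{(n)}=v-\tfrac{K}{n}\sum_i L^{(n)}_i$, together with the initial conditions $Y^{(n)}(0)=0$ and $V^{(n)}(0)=v$. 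All paths are continuous because the $m^{(n)}_i$ and $I^{(n)}_f$ are.

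To conclude that this is a \emph{strong} solution I would verify adaptedness to $(\mc{F}_t)$. The map $\Gamma_n$ is non-anticipating: in the recursive construction of Theorem~\ref{prop:existence}, $I^{(n,\e)}_f(t)$ depends on $f$ only through its restriction to $[0,\lfloor t/\e\rfloor\e]$, and Proposition~\ref{prop:cauchy} shows $I^{(n,2^{-m})}_f\to I^{(n)}_f$ uniformly on $[0,T]$; hence $I^{(n)}_f(t)$---and with it $m^{(n)}_i(t)=\sup_{0<s<t}[-(f_i(s)+I^{(n)}_f(s))]\lor 0$ and $\wt{V}^{(n)}_f(t)$---is a measurable functional of $(f(s):s\le t)$. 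Since $X^{(n)}_i(0)$ is $\mc{F}_0$-measurable and the $B^{(i)}$ are $(\mc{F}_t)$-adapted, all of $X^{(n)}_i,Y^{(n)},V^{(n)},L^{(n)}_i$ are $(\mc{F}_t)$-adapted, giving a strong solution. (Pathwise uniqueness also follows: any continuous adapted solution of \eqref{eq:sysLaw} produces, via $I^{(n)}:=-Y^{(n)}$ and $m_i:=L^{(n)}_i$, a solution of \eqref{eq:sMap1}--\eqref{eq:sMap4} for the same $f$, which is unique by Theorem~\ref{prop:existence}.)

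Finally I would establish the local-time representation of $L^{(n)}_i$ promised after \eqref{eq:sysLaw}. Since $V^{(n)}$ is bounded, $Y^{(n)}(t)=\int_0^t V^{(n)}(s)\,ds$ is Lipschitz, hence of bounded variation, so $X^{(n)}_i-Y^{(n)}$ is a continuous semimartingale with martingale part $B^{(i)}$ and quadratic variation $\langle X^{(n)}_i-Y^{(n)}\rangle_t=\langle B^{(i)}\rangle_t=t$. By the first paragraph $(X^{(n)}_i-Y^{(n)},L^{(n)}_i)$ solves the Skorohod reflection problem at $0$ for the continuous semimartingale $X^{(n)}_i(0)+B^{(i)}-Y^{(n)}$, so by the characterization of the reflection term as semimartingale local time (see \cite[Section~3.6]{KaratzasShreve}), $L^{(n)}_i$ is the local time of $X^{(n)}_i-Y^{(n)}$ at $0$ in the normalization of \cite{KaratzasShreve}; the occupation density formula, using $d\langle X^{(n)}_i-Y^{(n)}\rangle_s=ds$ and $X^{(n)}_i-Y^{(n)}\ge 0$, then gives
\[
L^{(n)}_i(t)=\lim_{\e\to 0}\frac{1}{2\e}\int_0^t 1_{[0,\e]}\big(X^{(n)}_i(s)-Y^{(n)}(s)\big)\,ds,\qquad t\in[0,T],\ \text{a.s.}
\]
I do not expect a serious obstacle here: the only ingredient beyond bookkeeping is this last identification of $m^{(n)}_i$ with a local time, which is a standard fact about semimartingale local time once one notices that the absolutely continuous barrier $Y^{(n)}$ contributes nothing to the martingale part or the quadratic variation of $X^{(n)}_i-Y^{(n)}$; the rest of the proposition is a direct transcription of Theorem~\ref{prop:existence}.
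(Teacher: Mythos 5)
Your proposal is correct, and its first half (checking the four requirements of \eqref{eq:sysLaw} by transcribing \eqref{eq:sMap1}--\eqref{eq:sMap4} through \eqref{eq:YandVMap}, plus the adaptedness/non-anticipation of $\Gamma_n$ for the ``strong'' claim) matches the paper, which in fact says less than you do about adaptedness, dismissing it with ``follows from the path-by-path construction.'' Where you genuinely diverge is the identification of the reflection terms $m_i^{(n)}$ with the local times $L_i^{(n)}$. The paper does this probabilistically: it builds an exponential (local) martingale, localizes with stopping times $\tau_k$, applies Girsanov so that $B^{(i)}-Y^{(n)}$ becomes a Brownian motion under the new measure, invokes the classical L\'{e}vy theorem for reflected Brownian motion to identify $\tilde{m}_i$ with the local time at zero, and transfers the a.s.\ identity back since the measures are equivalent. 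You instead argue directly at the semimartingale level: $Y^{(n)}$ is Lipschitz, so $X_i^{(n)}-Y^{(n)}$ has martingale part $B^{(i)}$ and quadratic variation $t$, the pair $(X_i^{(n)}-Y^{(n)},m_i^{(n)})$ solves the Skorohod problem, hence $m_i^{(n)}$ is the semimartingale local time at $0$ and the occupation-density formula gives the $\frac{1}{2\e}$-limit. This is a legitimate and arguably cleaner route: it avoids verifying the exponential martingale and the localization, and it keeps everything under $\prob$. The one step you should spell out is the Tanaka computation behind ``reflection term $=$ local time'' for a semimartingale with drift: \cite[Section 3.6.C]{KaratzasShreve} states this for Brownian motion only, and in general one applies Tanaka's formula to $W=X_i^{(n)}-Y^{(n)}\ge 0$, uses that $W$ spends zero Lebesgue time at $0$ (so the absolutely continuous drift does not charge the contact set and the indicator $1_{\{W>0\}}$ can be dropped from the $dB$ and $dt$ integrals), and checks that the normalization indeed reproduces the paper's $\lim_{\e\to 0}\frac{1}{2\e}\int_0^t 1_{[0,\e]}(W_s)\,ds$. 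What the paper's Girsanov route buys in exchange is the explicit statement that, after the change of measure, the system is a family of reflected Brownian motions --- a fact it reuses later (in Corollary \ref{cor:densityOfRBM} and in the proof of Theorem \ref{HL}, both of which cite ``as in the proof of Proposition \ref{prop:eqInLaw}''), so with your argument those later appeals would need their own short justification.
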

\begin{proof}
We begin from \eqref{eq:sMap1} - \eqref{eq:sMap4} with the $f_i(t)$ replaced with
$B^{(i)}(t) + X^{(n)}_i(0).$ The following holds for all $t \in [0, T]$, almost surely:
\begin{align}
&X^{(n)}_i(t) = X^{(n)}_i(0) + B^{(i)}(t) + m_i^{(n)}(t) \geq Y^{(n)}(t), \label{eq:equivInLaw1}\\
&V^{(n)}(t) = v - \frac{K}{n}\sum_{i=1}^n m_i^{(n)}(t),\label{eq:equivInLaw2}\\
&Y^{(n)}(t) = \int\limits_0^t V^{(n)}(s)\, \mathrm{d}s,\label{eq:equivInLaw3}\\
&m_i^{(n)} \emph{ is flat off of } \{t : X^{(n)}_i(t) = Y^{(n)}(t)\}.
\label{eq:equivInLaw4}
\end{align}
We take $v = 0$ for convenience.
The fact that we have a strong solution of the system follows from the 
path-by-path construction.
We apply a transformation of measure argument.
As mentioned in Remark \ref{remark: sMap}, for a fixed time $t \in [0, T],$ 
\[
\ds V^{(n)}(t) = -\frac{K}{n}\sum_{i=1}^n\sup_{0 \leq u \leq t}\big(B^{(i)}(u) + X^{(n)}_i(0) - Y^{(n)}(u)\big)^-,
\]
which, due to nonegativity of $X^{(n)}_i(0)$ and the fact that $Y^{(n)} \leq 0,$  
$$\ds \sup_{u \in [0, T]}|V^{(n)}(u)| \leq \frac{K}{n}\sum_{i=1}^n\sup_{0 \leq u \leq T}\big(B^{(i)}(u)\big)^-.$$
This is equivalent to saying a continuous function plus a nonnegative drift has a running minimum below
zero less than that of the continuous function. 
It follows from continuity of the processes on $[0, T]$ that 
$\sup_{0 \leq u \leq t} |V^{(n)}(u)| \, \leq |V^{(n)}(T)| < \infty$ almost surely. Therefore  
$$\ds Z(t) = \exp\left(\frac{K}{n}\sum_{i=1}^n\int\limits_0^tV^{(n)}(s)\, \mathrm{d}B^{(i)}(s) - nY^{(n)}(t)\right)
$$
is a local martingale, and therefore there exists a collection
of exhaustive stopping times $\ds \tau_k \overset{a.s.}{\to} \infty$ such that 
$Z(t \land \tau_k)$ is a true martingale for each $k$. We will apply a Girsanov
transformation of measure, see \cite[Ch. 3.5]{KaratzasShreve}. Let $\mb{Q}$ be defined by $d\mb{Q}/d\mb{P} = Z(t \land \tau_k).$ Under $\mb{Q}$ each 
$\wt{B}^{(i)}(t \land \tau_k) := B^{(i)}(t \land \tau_k) - Y^{(n)}(t \land \tau_k)$ has 
the law of a Brownian motion, and 
the joint law of $(X^{(n)}_1, \dots, X^{(n)}_n)$, when stopped at $\tau_k$, has the same law as $\wt{X
}_i(t \land \tau_k) := X^{(n)}_i(0) + \wt{B}^{(i)}(t \land \tau_k) + \tilde{m}_i(t \land \tau_k) \geq 0,$ where 
$\tilde{m}_i(t) = \sup_{0 \leq u \leq t}\big(\wt{B}^{(i)}(u) + X^{(n)}_i(0)\big)^-.$
The $\tilde{m}_i$ are then equal to $m_i^{(n)} := \sup_{0 \leq u \leq t}\big(B^{(i)}(u) + X_i^{(n)}(0) - Y^{(n)}(u)\big)^-.$
Because $\tilde{m}_i$ is flat off $\{t : \wt{X}_i(t) = 0 \},$ the 
classical L\'{e}vy's theorem \cite[Chapter 3]{KaratzasShreve} shows that 
this system $(\wt{X}_1, \dots, \wt{X}_n)$ is equivalent in law to processes solving 
\[
d\wt{X}_i = d\wt{B}^{(i)} + d\wt{L}_i, \, \wt{X}_i(0) = X^{(n)}_i(0),
\] when stopped at $\tau_k$,
and where $\wt{L}_i$ is the local time at zero of $\wt{X}_i$. That is,
\begin{align*}
\wt{L}_i(t) &= \lim_{\e \to 0}\frac{1}{2\e}\int_0^t1_{[0, \e]}(\wt{X}_i(s))\, \mathrm{d}s \\
&= \lim_{\e \to 0}\frac{1}{2\e}\int_0^t1_{[0, \e]}(X_i^{(n)}(s) - Y^{(n)}(s))\, \mathrm{d}s =: L_i^{(n)}(t),
\end{align*}
for all $t$, almost surely. Additionally, 
$\tilde{m}_i$ is the local time of $\wt{X}_i^{(n)}$ at zero, which by definition
is the local time of contact between $X_i^{(n)}$ and $Y^{(n)}.$ Since $\tilde{m}_i = m_i^{(n)}$
this shows that $m_i^{(n)}(t) = L_i^{(n)}(t)$ for all $t$, almost surely.
This means that under $\prob,$ as processes stopped at $\tau_k$, solutions to
\eqref{eq:equivInLaw1}-\eqref{eq:equivInLaw3} are solutions of
\[
\ds dX^{(n)}_i = dB^{(i)} + dL^{(n)}_i, \ dY^{(n)} = -\frac{K}{n}\sum_{i = 1}^nL^{(n)}_i(t)dt,
\]
with the given initial conditions and
 where $L^{(n)}_i$ is the local time of $X^{(n)}_i - Y^{(n)}$ at zero. This latter
 process is the definition of \eqref{eq:sysLaw}. Since 
$\tau_k \to \infty$ almost surely, the equivalence in law holds as processes defined on $[0, T].$
\end{proof}

\begin{lemma}
Let $(Y^{(n)}, V^{(n)})$ be defined as in equation \eqref{eq:YandVMap}, then
\[
\|V^{(n)}\|_{[0, T]} \leq v + K\left(vT + \frac{1}{n}\sum_{i=1}^nm_i(T)\right),
\]
almost surely,
where $m_i(t) = \sup_{0 \leq s \leq t}\big(B^{(i)}(s)\big)^-.$
\label{lemma:boundOnV}
\end{lemma}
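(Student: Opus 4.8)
The plan is to control $V^{(n)}$ directly from the formula obtained in Remark~\ref{remark: sMap} and Proposition~\ref{prop:eqInLaw}, namely
\[
V^{(n)}(t) = v - \frac{K}{n}\sum_{i=1}^n m_i^{(n)}(t), \qquad m_i^{(n)}(t) = \sup_{0 < u < t}\big[-(B^{(i)}(u) + X^{(n)}_i(0) - Y^{(n)}(u))\big] \lor 0,
\]
and then to bound the ``true'' local times $m_i^{(n)}$ by the Brownian local times $m_i(t) = \sup_{0<s<t}[-B^{(i)}(s)]\lor 0$ together with an error coming from $Y^{(n)}$. First I would note, exactly as in the proof of Proposition~\ref{prop:eqInLaw}, that $X^{(n)}_i(0) \geq 0$ and $Y^{(n)} \leq 0$ (the latter because $V^{(n)}$ starts at $v$ and is nonincreasing in the relevant regime, or more simply because $Y^{(n)} = -I^{(n)}_{B+X(0)}$ with $I^{(n)}$ nonnegative by construction). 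Hence $-(B^{(i)}(u) + X^{(n)}_i(0) - Y^{(n)}(u)) \leq -B^{(i)}(u) + Y^{(n)}(u)$... but since $-Y^{(n)}(u) \geq 0$ this goes the wrong way, so instead I would write $-(B^{(i)}(u) + X^{(n)}_i(0) - Y^{(n)}(u)) \le -B^{(i)}(u) + (-Y^{(n)}(u))$ and bound $-Y^{(n)}(u) = I^{(n)}(u) \leq \int_0^u |V^{(n)}(s)|\,ds$.

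The key step is then a self-referential estimate: set $\|V^{(n)}\|_{[0,t]} = \sup_{0<s<t}|V^{(n)}(s)|$. Taking suprema in the displayed formula gives
\[
\|V^{(n)}\|_{[0,T]} \leq v + \frac{K}{n}\sum_{i=1}^n\Big(m_i(T) + \sup_{0<u<T}I^{(n)}(u)\Big) \leq v + \frac{K}{n}\sum_{i=1}^n m_i(T) + K\!\int_0^T \|V^{(n)}\|_{[0,s]}\,ds.
\]
At this point the natural move would be Gr\"onwall, but that produces an $e^{KT}$ factor, not the clean bound claimed. So instead I would use the monotonicity already recorded in the construction of $\Gamma_n$: $V^{(n, \e)}$, hence $V^{(n)}$, is monotone (the velocity only decreases), so $I^{(n)}(u) = \int_0^u |V^{(n)}| \le u\,\|V^{(n)}\|_{[0,u]}$ can be replaced by the cruder but sufficient bound using that $V^{(n)}(T)$ realizes the supremum and $I^{(n)}(T) \le T\|V^{(n)}\|_{[0,T]}$ is not what we want either. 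The clean route: bound $-Y^{(n)}(u) = I^{(n)}(u) = \int_0^u V^{(n)}$ where $V^{(n)}$ lies between $v - K\|f\|/n \cdot$(stuff) and $v$; more efficiently, observe that along the approximating scheme $V^{(n,\e)}(t) = \frac Kn\sum_i \max_{0\le u\le \lfloor t/\e\rfloor\e}(-[f_i(u)+I^{(n,\e)}(u)]\lor 0)$, so $-Y^{(n)} = I^{(n)} \le \int_0^T v\,ds = vT$ simply because $V^{(n)} \le v$ pointwise (the interaction term only subtracts). Then $-(B^{(i)}(u)+X_i^{(n)}(0)-Y^{(n)}(u)) \le -B^{(i)}(u) + vT$, so $m_i^{(n)}(T) \le m_i(T) + vT$, and
\[
\|V^{(n)}\|_{[0,T]} \le v + \frac{K}{n}\sum_{i=1}^n\big(m_i(T) + vT\big) = v + K\Big(vT + \frac1n\sum_{i=1}^n m_i(T)\Big),
\]
which is exactly the claim.

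The main obstacle is getting the constant right: a careless application of Gr\"onwall to the self-referential inequality inflates the bound by $e^{KT}$, so the proof hinges on exploiting the sign structure—$X^{(n)}_i(0) \geq 0$, $Y^{(n)} \leq 0$, and crucially $V^{(n)}(t) \leq v$ for all $t$ (equivalently $-Y^{(n)}(t) = I^{(n)}(t) \leq vt \leq vT$)—to bound $-Y^{(n)}$ by $vT$ \emph{without} feeding $\|V^{(n)}\|$ back into itself. Once that linear-in-$t$ bound on the barrier is in hand, the estimate $m_i^{(n)}(T) \le m_i(T) + vT$ is immediate and summing over $i$ finishes the proof with no Gr\"onwall needed.
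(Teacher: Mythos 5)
Your final argument is essentially the paper's proof: it likewise uses the representation $V^{(n)}(t)=v-\tfrac{K}{n}\sum_i m_i^{(n)}(t)$ with $m_i^{(n)}(t)=\sup_{0<u<t}[-(B^{(i)}(u)+X_i^{(n)}(0)-Y^{(n)}(u))]\lor 0$, the pointwise bound $V^{(n)}\le v$ (hence $Y^{(n)}(u)\le vu\le vT$), and the resulting comparison $m_i^{(n)}(T)\le m_i(T)+vT$, summed over $i$, with no Gr\"onwall factor. The only blemish is the intermediate sign slip asserting $I^{(n)}=-Y^{(n)}\ge 0$ and $-Y^{(n)}\le vT$ (false when $v>0$, since then $Y^{(n)}$ is initially positive); the inequality you actually need and ultimately use is $Y^{(n)}(u)\le vu\le vT$, which is exactly the paper's step.
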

\begin{proof}
Clearly $\sup_{0 \leq s \leq T}V^{(n)}(s) \leq |v|,$ which implies that $Y^{(n)}(t) - Y^{(n)}(s) \leq |v|(t - s)$ for 
$(s, t) \subset [0, T].$ From Remark \ref{remark: sMap}, Lemma \ref{lemma:skorohodIneq}, and the fact that $X_i^{(n)}(0) \geq 0$, we have
\begin{align*}
\|V^{(n)}\|_{[0, T]} &= \sup_{0 \leq s \leq T}\left(|v| - \frac{K}{n}\sum_{i=1}^n\sup_{0 \leq u \leq s}\big(B^{(i)}(u) + X_i^{(n)}(0) - Y^{(n)}(u)\big)^-\right)\\
&\leq |v| + \frac{K}{n}\sum_{i=1}^n\sup_{0 \leq s \leq T}\big(B^{(i)}(s) - |v|s\big)^-\\
&\leq |v| + K\left(|v|T + \frac{1}{n}\sum_{i=1}^n\sup_{0 \leq s \leq T}\big(B^{(i)}(s)\big)^- \right).
\end{align*}
\end{proof}
\section{Hydrodynamic Limit and Propagation of Chaos}

\noindent
\\In subsection 3.1 we state the main lemmas, propositions, and corollaries used 
in the proofs of \ref{HL} and \ref{th: P-C}. Subsection 3.2 will contain the proofs of theorems \ref{HL} and \ref{th: P-C}. The proofs of the statements in subsection 3.1 will be given in subsection 3.3.
\subsection{Propositions and Lemmas}

\begin{lemma}
% The collection $(Y^{(n)}, V^{(n)})$ is tight in $C([0, T], \R^2).$
% The collection $\{(Y^{(n)}(t), V^{(n)}(t)) : t \in [0, T]\}_{n \geq 1}$ is 
% tight in the space $C([0, T], \R^2)$.
Let $(\Omega, \prob)$ be a probability space supporting a sequence of i.i.d.\ Brownian motions $B^{(i)}, i \in \N$ and initial conditions $\{X_i^{(n)}(0) : 1 \leq i \leq n, n \in \N\}$  independent from these Brownian motions and from one another. Define $V^{(n)}$ by the Skorohod map \eqref{eq:YandVMap}. For any $\e, \eta > 0$ there is a $\delta > 0$ such that
\[
\prob\big(\sup_n\omega(V^{(n)}, T, \delta) > \e \big) < \eta.
\]
\label{lemma:tightness}
\end{lemma}

\begin{cor}\label{cor:equi_V^n}
The collection $\{V^{(n)}(s) : s\in [0, T]\}$ is equicontinuous with probability 1. Furthermore,
for almost every $\omega,$ the sequence $\{V^{(n)}_\omega : n \in \N\}$ satisfies the Arzela-Ascoli criteria.
\end{cor}

\begin{prop}[Strong Convergence] \label{prop:strong_convergence}
In the same setting as Lemma \ref{lemma:tightness}, there is a continuous process $V$ such that
\[
V^{(n)} \lra V,
\] almost surely in the uniform norm on $C[0, T]$. Furthermore, 
\[
Y^{(n)} := \int_0^\cdot V^{(n)}(s) \, \md s \lra \int_0^\cdot V(s)\, \md s
\]
almost surely on $C[0, T]$.
\label{prop:uniquenessSubLimit}
\end{prop}

\begin{prop}
% Let $n'$ be some sequence such that $(Y^{(n)}, V^{(n)})$ converges in distribution on the space
% $C([0, T], \R^2)$, as guaranteed by tightness in Lemma \ref{lemma:tightness}. Hence,
% \[
% V^{(n')} = \frac{K}{n'}\sum_{i=1}^{n'}L^{(n')}_i \overset{d}{\lra} V,
% \]
% for some continuous process $V.$
Let $(Y, V)$ be the almost sure limit of $(Y^{(n)}, V^{(n)})$, as guaranteed by Proposition
\ref{prop:strong_convergence}.
Then,
\begin{align*} 
% &Y^{(n')} \overset{d}{\lra} Y := \int_0^\cdot V(s)\, \mathrm{d}s,\\
&(Y, V) \text{ is deterministic,}\\ 
&V(t) = v - K\,\ex \, m(t), \text{ for all $t \in [0, T],$ where}\\
&m(t) = \sup_{0  \leq s \leq t}\big((B^{(1)}(s) + X_1^{(\infty)}(0)) - Y(s)\big)^-,
\end{align*}

 and $X_1^{(\infty)} \dist \pi_0(\mathrm{d}x).$
 \label{prop:subsequentialLimit}
\end{prop}
\begin{remark}
Since 
 $$\ds Y^{(n')}(t) = \int_0^tV^{(n')}(s)\, \mathrm{d}s$$
 for all $0 \leq t \leq T$, almost surely, we see that $Y^{(n')}$ converges uniformly to some deterministic 
 $Y$.
\end{remark}

% We have two ways to prove uniqueness. Namely, it follows from Section 4
% once the hydrodynamic result is established for a convergent subsequence.
% The way we present it follows from the estimates previously established.

% \begin{prop}[Uniqueness of Limit]
% All subsequential limits given Proposition \ref{prop:subsequentialLimit} are in fact
% the same.
% \label{prop:uniquenessSubLimit}
% \end{prop}

The previous two propositions imply the following corollary. By an abuse of 
notation, we let $\mc{W}_r((Y^{(n)}, V^{(n)}), (Y, V))$ refers to the 
Wasserstein-$q$ distance between the measures induced by $(Y^{(n)}, V^{(n)})$
and $(Y, V)$ on $C([0, T], \R^2).$
\begin{cor}
There are deterministic functions $(Y(t), V(t))$ defined on $t \in [0, T],$ with $dY/dt = V,$
such that
\[
\mc{W}_r((Y^{(n)}, V^{(n)}), (Y, V)) \lra 0, \text{ for any $r \geq 1.$}
\]
\label{cor:convToY}
\end{cor} 
\noindent
In \cite{burdzy2004}, Burdzy, Chen and Sylvester study the density
of Brownian motion reflected inside a time dependent domain. They assume
the boundary is $C^3$ in both time and space, see 
\cite[Section 2]{burdzy2004}. In our case $n = 1,$ however,
their results still hold under the weaker assumption that the space-time boundary is $C^2.$ 
Let $g(t) \in C^2[0, T]$ be a twice differentiable function with $g(0) = 0$. 
Given a Brownian motion $B(t)$ and $x \geq 0$, let $p(t, y)$ be the 
transition density of the reflected Brownian motion solving 
$dX(t) = dB(t) + dL(t),$ and the
initial condition $X(0) = x,$ where $L$ is the local time of $X$ on $g.$ That is,
for a given Borel set $A \subset [g(t), \infty),$ 
$$\prob_x(X(t) \in A) = \int_Ap(t,y) \, \mathrm{d}y.$$

\begin{prop}[\cite{burdzy2004}, Theorem 2.4]\label{densityBounds}
For each fixed $l \in (0, T)$ there exists constants $K_l > 0$ and $C_l < \infty$
such that 
\[
p(t, y) \leq \frac{C_l}{\sqrt{t}}\exp\left( \frac{-K_l|x - y|^2}{t}\right)
\]
for all $0 < t < l$ and $(t, y) \in \{(s, z) : s \in [0, l], \ z \geq g(s)\}$ and
where $X(0) = x \geq 0 = g(0).$
\end{prop}

\begin{prop}[\cite{burdzy2004}, Theorem 2.9]\label{densitysolution}
The transition density $p(t, y)$ defined above solves the following 
heat equation in a time-dependent domain:
\begin{align*}
\begin{split}
&\frac{\partial p(t, y)}{\partial t} = \frac{1}{2}\Delta_yp(t, y), \ y > g(t),\\
&\frac{\partial^+ p(t, y)}{\partial^+ y} = -2g'(t)p(t, y), \ y = g(t),\\
&\lim_{t \downarrow 0}p(t, y)dy = \delta_{x}(\md y).
\label{eq:densityOfRBM}
\end{split}
\end{align*}
\label{prop:densityOfRBM}
\end{prop}
\begin{remark}
In particular, $p(t, y)$ has 
differentiability necessary for these statements to hold in the classical 
sense. See Remark \ref{remark:initialCondition} for an interpretation of the limiting statement.
\end{remark}
\begin{cor}
Let $\xi$ be a random variable with law $\pi_0(dx)$, independent from the Brownian motion $B
$, both supported on $(\Omega, \prob, \mc{F}_t).$ Let $g \in C^2([0, T], \R)$ and 
\[
X(t) = \xi + B(t) + m(t), \qquad m(t) = \sup_{0 < s < t}\big(\xi + B(s) - g(s)\big)^-.
\]
Then $p(t, x) := \prob(X(t) = dx)$ solves the PDE
\begin{flalign}
\begin{split}
&\frac{\partial p}{\partial t} = \frac{1}{2}\Delta_yp, \ y > g(t),\\
&\frac{\partial p}{\partial y} = -2g'(t)p, \ y = g(t),\\
&\lim_{t \downarrow 0}p(t, y)dy = \pi_0(dy).
\end{split}
\end{flalign}
\label{cor:densityOfRBM}
\end{cor}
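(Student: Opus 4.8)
The plan is to obtain $p(t,y)$ by superposing the fixed-start transition densities provided by Proposition~\ref{prop:densityOfRBM}. Since $\xi$ is independent of $B$ and (recalling $\pi_0$ is supported in $[0,\infty) = [g(0),\infty)$) conditioning on $\{\xi = x\}$ shows that the conditional law of $X(t) = \xi + B(t) + m(t)$ is exactly the law of the reflected Brownian motion started at $x$, whose density is the function I will write $p^x(t,\cdot)$ furnished by Proposition~\ref{prop:densityOfRBM}. Hence, using independence to condition on $\xi$ and then Tonelli's theorem (the integrand being nonnegative), for every Borel $A \subset [g(t),\infty)$,
\[
\prob(X(t) \in A) = \int_0^\infty \prob_x(X(t) \in A)\,\pi_0(\md x) = \int_A \Big( \int_0^\infty p^x(t,y)\,\pi_0(\md x)\Big)\,\md y,
\]
so that $X(t)$ has density $p(t,y) = \int_0^\infty p^x(t,y)\,\pi_0(\md x)$. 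It then remains to pass the three defining properties of each $p^x$ through the integral against $\pi_0$.

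For the heat equation and the oblique boundary condition I would differentiate under the integral sign: $\partial_t p = \int \partial_t p^x\,\pi_0(\md x) = \tfrac12\int \Delta_y p^x\,\pi_0(\md x) = \tfrac12 \Delta_y p$ for $y > g(t)$, and likewise $\partial_y^+ p(t,g(t)) = \int \partial_y^+ p^x(t,g(t))\,\pi_0(\md x) = -2g'(t)\int p^x(t,g(t))\,\pi_0(\md x) = -2g'(t)\,p(t,g(t))$, the last equality using continuity of $p(t,\cdot)$ up to the boundary. The interchange is legitimate once one has, on every compact subset of the open domain $\{(t,y):0<t\le T,\ y>g(t)\}$ and on a one-sided boundary neighborhood over any $[\epsilon,T]$, a bound $\sup_{x\ge 0}\big(|p^x| + |\partial_t p^x| + |\partial_y p^x| + |\partial^2_{yy}p^x|\big)\le C$. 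This uniform-in-$x$ control is the heart of the matter and the step I expect to be the main obstacle. I would secure it by straightening the $C^2$ curve via the change of variables $(t,y)\mapsto(t,y-g(t))$, which turns the reflected motion into a reflected diffusion on the fixed half-line $[0,\infty)$ with bounded smooth drift; for such a process the interior and boundary parabolic estimates (Schauder, or Gaussian bounds on the kernel and its derivatives) depend only on $T$, on the distance to the boundary, and on $\|g\|_{C^2}$, and \emph{not} on the starting point, which is exactly what is needed. Alternatively one may read off the required bounds from the construction underlying Proposition~\ref{prop:densityOfRBM} in \cite{burdzy2004}.

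The initial condition is immediate: $t\mapsto X(t)$ is continuous with $X(0)=\xi$, so $X(t)\to\xi$ almost surely as $t\downarrow 0$, whence the law of $X(t)$—namely $p(t,y)\,\md y$—converges weakly to the law $\pi_0$ of $\xi$; that is, $\lim_{t\downarrow 0}p(t,y)\,\md y = \pi_0(\md y)$. (Equivalently, integrate the known convergence $p^x(t,\cdot)\,\md y\to\delta_x$ against $\pi_0$ using bounded convergence applied to $x\mapsto\ex[\psi(X^x(t))]$ for bounded continuous $\psi$.) In summary, the corollary reduces to superposition of Proposition~\ref{prop:densityOfRBM} plus a differentiation-under-the-integral argument whose only nontrivial ingredient is the start-point-uniform parabolic regularity of the reflected transition density.
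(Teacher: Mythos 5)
Your proposal follows the same skeleton as the paper's (very terse) proof: condition on $\xi$, invoke Proposition \ref{prop:densityOfRBM} for the fixed-start density, and mix over $\pi_0$. The one substantive difference is where the work is placed. The paper's proof consists almost entirely of the identification step you assert without argument: Proposition \ref{prop:densityOfRBM} is stated for the reflected Brownian motion defined through the SDE $dX = dB + dL$ with $L$ the local time of $X$ on $g$, whereas your $X$ is given by the explicit Skorohod-map formula $X = \xi + B + m$; Lemma \ref{classicSLemma} only gives that $m$ is nondecreasing and flat off the contact set, and the statement that $m$ is in fact the local time on $g$ (so that the conditional law of $X$ given $\xi = x$ is exactly the process covered by Proposition \ref{prop:densityOfRBM}) is justified in the paper by the Girsanov change of measure plus L\'evy's theorem, exactly as in the proof of Proposition \ref{prop:eqInLaw}. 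You should cite or reproduce that one-line argument rather than treat the identification as immediate. Conversely, you are more careful than the paper about the superposition itself: the paper silently passes the PDE and the oblique boundary condition through the $\pi_0$-integral, while you flag the differentiation-under-the-integral issue and sketch how to get start-point-uniform interior and boundary estimates (straightening the boundary, Schauder/Gaussian bounds, or reading them off from \cite{burdzy2004}); that is a legitimate and useful addition, and your treatment of the initial condition via weak convergence is fine. So the proposal is correct in outline, with the Girsanov--L\'evy identification being the ingredient you owe and the uniform regularity bound being detail you supply beyond what the paper records.
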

% \begin{proof}[Proof of Corollary \ref{cor:densityOfRBM}]
% As in the proof of Proposition \ref{prop:eqInLaw}, it follows from L\'{e}vy's theorem applied after
% a Girsanov change of measure that $X$ is distributed as a Brownian motion reflected from the curve $g.$
% Now apply Proposition \ref{prop:densityOfRBM} after conditioning on $\xi.$ See Remark \ref{remark:initialCondition} for a brief discussion of the last condition.
% \end{proof}
\noindent
For a given time $0 < t < T$ and fixed value of $n$, the definition of our interacting diffusions 
gives us $n$ particles $X^{(n)}_1(t), \dots, X^{(n)}_n(t)$ which all lie in $[Y^{(n)}(t), \infty).$ 
Recall that
\begin{align}
\pi_t^{(n)} = \frac{1}{n}\sum_{i=1}^n\delta_{\{X^{(n)}_i(t)\}}
\label{eq:defOfPi}
\end{align}
denotes the empirical process of the arrangment of these particles. Similarly recall the definition
of $\mc{W}_p$ in \ref{def:Wp}
% Note that the Wasserstein distance $W_1$ of order $1$ has a dual
% representation as 
% $$
% W_1(\mu, \nu) = \inf\Big\{\int f d\mu - \int f d\nu : \text{Lip} f \leq 1\Big\}.
% $$
% $W_1$ is sometimes called the ``earth movers'' distance, oftentimes in association
% with optimal transport.
The main 
property of $\mc{W}_p$ we will need is
that $\mc{P}_p$ is separable and complete under $\mc{W}_p.$ Clearly $\pi^{(n)}_t$ is a random variable with 
state space $\mc{P}_p.$ In this way $\{\pi^{(n)}_t : t \in [0, T]\}$ is a $(\mc{P}_p, \mc{W}_p)-$valued stochastic 
process. By Lemma \ref{lemma:contPi} below $\pi^{(n)}_t$ is continuous,
and $(\pi^{(n)}_\cdot, Y^{(n)}(\cdot), V^{(n)}(\cdot))$ has the strong Markov property.
\begin{lemma}
For any collection $x_i, y_i \in \R, \ i = 1,\dots, n$ we have
$$
\mc{W}_p\Big(\frac{1}{n}\sum_{i=1}^n\delta_{\{x_i\}}, \frac{1}{n}\sum_{i=1}^n\delta_{\{y_i\}}\Big)
\leq \Big(\frac{1}{n}\sum_{i=1}^n|x_i - y_i|^p\Big)^{1/p}$$
\label{lemma:wpDiracBound}
\end{lemma}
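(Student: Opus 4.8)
The plan is to bound the Wasserstein distance by exhibiting one explicit coupling of the two empirical measures and estimating its transport cost directly, rather than invoking any duality or optimality argument. Since by the definition of $W_p$ the quantity $W_p(\mu,\nu)$ is an infimum over all pairs $(X,Y)$ defined on a common probability space with $X\overset{d}{=}\mu$ and $Y\overset{d}{=}\nu$, any single admissible pair immediately furnishes an upper bound, so it suffices to produce a good one.

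Concretely, I would introduce a random index $U$ uniformly distributed on $\{1,\dots,n\}$ on some probability space and set $X=x_U$ and $Y=y_U$. First I would verify the marginals: for a Borel set $A$, $\prob(X\in A)=\frac1n\#\{i: x_i\in A\}=\frac1n\sum_{i=1}^n\delta_{\{x_i\}}(A)$, so $X$ has law $\frac1n\sum_{i=1}^n\delta_{\{x_i\}}$, and by the same computation $Y$ has law $\frac1n\sum_{i=1}^n\delta_{\{y_i\}}$; this remains correct when some of the $x_i$ (or $y_i$) coincide, since repeated points simply receive proportionally larger mass under this formula. Hence $(X,Y)$ is an admissible coupling for the pair of empirical measures.

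Next I would compute the cost of this coupling: $\ex\, d(X,Y)^p=\ex\,|x_U-y_U|^p=\frac1n\sum_{i=1}^n|x_i-y_i|^p$. Taking the infimum in the definition of $W_p$, and in particular plugging in this coupling, gives $W_p\big(\frac1n\sum_{i=1}^n\delta_{\{x_i\}},\frac1n\sum_{i=1}^n\delta_{\{y_i\}}\big)^p\leq\frac1n\sum_{i=1}^n|x_i-y_i|^p$, and raising both sides to the power $1/p$ yields exactly the stated inequality.

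I do not expect any real obstacle here: the only point that deserves a moment of care is confirming that this ``synchronous'' coupling through a common index $U$ really does reproduce the empirical measures as its marginals in the presence of repeated points, but that follows at once from the definition of the empirical measure and requires no further work.
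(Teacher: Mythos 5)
Your proposal is correct and is essentially the paper's own argument: the paper bounds $W_p$ by the coupling that places mass at $x_i$ exactly when it places mass at $y_i$, which is precisely your synchronous coupling via a uniform index $U$. You simply spell out the marginal check and the cost computation in more detail.
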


\begin{lemma}
The pair $\{(\pi^{(n)}_t, Y^{(n)}(t), V^{(n)}(t)) : 0 \leq t \leq T\}$ is a continuous strong 
Markov process on $\mc{P}_p\times \R^2$ under the product metric $\mc{W}_p \times \|  \cdot  \|.$
\label{lemma:contPi}
\end{lemma}
\noindent
As $\pi^{(n)}$ is a continuous $\mc{P}_p-$valued process, it induces a 
probability measure on $C([0 ,T], (\mc{P}_p, \mc{W}_p)).$ 
% the space of continuous functions 
% from $[0, T]$ to $(\mc{P}, \mc{W}_p).$
% The question that a hydrodynamic limit
% attempts to answer is typically of this form: To what measures, if any, on $C([0, T],\mc{P})$ does $\pi^{(n)}$ approach as $n \to \infty?$
% A hydrodynamic limit result classifies the behavior of the distribution
% on $C([0, T], \mc{P})$ induced by $\pi^{(n)}$ as $n \to \infty.$ 
We will abuse notation, which 
should be clear from context, by letting $\pi^{(n)}$ denote the measure on 
$C([0, T], \mc{P}_p)$, and $\pi^{(n)}_t$ to denote either the stochastic process or the 
element in $\mc{P}_p$ when $t$ is fixed. Let
% Our strategy is similar to that in which we deal with stochastic
% processes on $\R,$ the only difference being our state space is $(\mc{P}, \mc{W}_p)$
% which is not locally compact. As a result we will have to sidestep the compactness
% conditions which are sufficient for tightness of measures on $C([0, T], E).$
% \begin{itemize}
% \item Use a uniform stochastic equicontinuity (Proposition \ref{prop:unifEquicPi})
% to show that the sequence $\{\tilde{\pi}^{(n)} : n \geq 1\}$ defined below
% is equicontinuous in $C([0, T], (\mc{P}, \mc{W}_p))$ with probability one.\\

% \item For each subsequence $n_k$ generate a further subsequence $n_k'$ in which
% $\tilde{\pi}_t^{(n_k')}$ converge to $p(t, \cdot)$ along all rationals simulataneously
% with probability one.
% \end{itemize}
\[
\tilde{\pi}^{(n)}_t := \frac{1}{n}\sum_{i=1}^n\delta_{\{\wt{X}^{(i)}(t)\}},
\]
where
\begin{align}
\label{eq:systemTilde}
d\wt{X}^{(i)} = dB^{(i)} + d\wt{L}^{(i)}, \qquad X^{(\infty)}_i(0) \overset{d}{=} \pi_0 \mbox{ for } i = 1, \dots, n,
\end{align}
the $X^{(\infty)}_i(0)$ are i.i.d.\ and $\wt{L}^{(i)}$ is the local
time of $\wt{X}^{(n)}_i$ on the function $Y$ given in Corollary
\ref{cor:convToY}.

\begin{prop}
There is a probability space supporting $\pi^{(n)}, \ \tilde{\pi}^{(n)}$ for all
$n$ such that
$$\sup_{0 \leq t \leq T}\mc{W}_p(\pi^{(n)}_t, \tilde{\pi}^{(n)}_t) \lra 0$$
almost surely.
\label{prop:equivalentLimitsPI}
\end{prop}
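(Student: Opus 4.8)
The plan is to realize $\pi^{(n)}$ and $\tilde\pi^{(n)}$ through the \emph{same} Brownian paths via the pathwise Skorohod construction, and then to compare the two empirical measures particle by particle, using that the Wasserstein distance between two equally weighted atomic measures is controlled by the average particle displacement (Lemma \ref{lemma:wpDiracBound}).

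First I would fix the probability space. Take the one furnished by Corollary \ref{cor:convToY} (equivalently, the one in the proof of Lemma \ref{lemma:tightness}), carrying i.i.d.\ Brownian motions $B^{(1)},B^{(2)},\dots$, i.i.d.\ initial samples $X^{(\infty)}_i(0)\sim\pi_0$, and a coupled triangular array $X^{(n)}_i(0)\sim\pi^{(n)}_0$ such that $Y^{(n)}\to Y$ uniformly on $[0,T]$ almost surely and $\frac1n\sum_{i=1}^n|X^{(n)}_i(0)-X^{(\infty)}_i(0)|^p\to 0$ almost surely; as in the derivation of \eqref{eq:convOfInitialCond}, the latter is obtained by using the optimal $W_p$-coupling of $(\pi^{(n)}_0,\pi_0)$ for each $n$ and passing to a subsequence, which we relabel as the full sequence. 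On this space define $X^{(n)}_i=X^{(n)}_i(0)+B^{(i)}+m^{(n)}_i$ as the output of $\Gamma_n$ applied to $(B^{(1)}+X^{(n)}_1(0),\dots,B^{(n)}+X^{(n)}_n(0))$ as in \eqref{eq:YandVMap}, so that by Proposition \ref{prop:eqInLaw} the resulting $\pi^{(n)}$ has the required law and, by Remark \ref{remark: sMap}, $m^{(n)}_i(t)=\sup_{0<u<t}-[(B^{(i)}(u)+X^{(n)}_i(0))-Y^{(n)}(u)]\lor 0$. Analogously set $\tilde X^{(i)}=X^{(\infty)}_i(0)+B^{(i)}+\tilde m_i$ with $\tilde m_i(t)=\sup_{0<u<t}-[(B^{(i)}(u)+X^{(\infty)}_i(0))-Y(u)]\lor 0$; arguing as in Corollary \ref{cor:densityOfRBM} (a Girsanov change of measure and L\'evy's theorem, using $Y\in C^2$), the $\tilde X^{(i)}$ are independent and each solves \eqref{eq:systemTilde}, so $\tilde\pi^{(n)}$ also has the required law.

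Next comes the comparison. By Lemma \ref{lemma:wpDiracBound},
\[
\sup_{0<t<T}W_p(\pi^{(n)}_t,\tilde\pi^{(n)}_t)^p\le\frac1n\sum_{i=1}^n\|X^{(n)}_i-\tilde X^{(i)}\|_{[0,T]}^p .
\]
Writing $X^{(n)}_i-\tilde X^{(i)}=(X^{(n)}_i(0)-X^{(\infty)}_i(0))+(m^{(n)}_i-\tilde m_i)$ and using that the map $h\mapsto(t\mapsto\sup_{0<u<t}(-h(u))\lor 0)$ is $1$-Lipschitz on $C([0,T],\R)$ in the uniform norm --- exactly the estimate behind \eqref{eq:tightnessV} --- one gets
\[
\|m^{(n)}_i-\tilde m_i\|_{[0,T]}\le|X^{(n)}_i(0)-X^{(\infty)}_i(0)|+\|Y^{(n)}-Y\|_{[0,T]},
\]
hence $\|X^{(n)}_i-\tilde X^{(i)}\|_{[0,T]}\le 2|X^{(n)}_i(0)-X^{(\infty)}_i(0)|+\|Y^{(n)}-Y\|_{[0,T]}$. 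Substituting and using $(a+b)^p\le2^{p-1}(a^p+b^p)$ for $p\ge1$,
\[
\sup_{0<t<T}W_p(\pi^{(n)}_t,\tilde\pi^{(n)}_t)^p\le2^{2p-1}\,\frac1n\sum_{i=1}^n|X^{(n)}_i(0)-X^{(\infty)}_i(0)|^p+2^{p-1}\|Y^{(n)}-Y\|_{[0,T]}^p,
\]
and both terms tend to $0$ almost surely by the choice of probability space, which proves the claim.

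I expect the first paragraph of the argument --- assembling $\pi^{(n)}$, $\tilde\pi^{(n)}$ and the limit curve $Y$ on one probability space with \emph{almost sure} (not merely in-probability) convergence --- to be the main obstacle. It requires chaining Skorohod representations (first for $(Y^{(n)},V^{(n)})\to(Y,V)$ from Corollary \ref{cor:convToY}, then an enlargement carrying the coupled initial data) together with the subsequence relabeling used for \eqref{eq:convOfInitialCond}, and, crucially, checking that after these manipulations $\tilde\pi^{(n)}$ still has exactly the law prescribed by \eqref{eq:systemTilde}; this last point is where the Girsanov/L\'evy identification and the $C^2$ regularity of $Y$ enter. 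Once the space is fixed, the remaining estimates are the elementary Lipschitz bounds displayed above.
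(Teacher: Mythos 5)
Your proposal is correct and follows essentially the same route as the paper: realize both systems on one space with the same Brownian motions, couple the initial data so that $\frac1n\sum_i|X^{(n)}_i(0)-X^{(\infty)}_i(0)|^p\to0$ and $Y^{(n)}\to Y$ almost surely (Corollary \ref{cor:convToY}), and then control $\sup_t W_p(\pi^{(n)}_t,\tilde\pi^{(n)}_t)$ via Lemma \ref{lemma:wpDiracBound} and the bound $\|m^{(n)}_i-\tilde m_i\|_{[0,T]}\le|X^{(n)}_i(0)-X^{(\infty)}_i(0)|+\|Y^{(n)}-Y\|_{[0,T]}$, exactly as in the paper's proof. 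The extra care you take about chaining the Skorohod representation and re-verifying the law of $\tilde\pi^{(n)}$ is a reasonable tightening of points the paper treats implicitly, but it does not change the argument.
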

\begin{remark}
This implies distributional convergence
of $\pi^{(n)}$ and convergence of $\tilde{\pi}^{(n)}$ are equivalent. They
will approach the same limiting measure should one (hence both) of them converge.
\end{remark}

We use the following notions of modulus of continuity. For $\gamma \in C([0, T], (\mc{P}_p,$ $\mc{W}_p))$,
\[
\omega'(\gamma, T, \delta) = \sup_{\substack{0 \,\leq \,t \,\leq \,T \\ |t - s| \,< \,\delta}}\mc{W}_p(\gamma_t, \gamma_s),
\]
and similarly for $f \in C([0, T], \R),$
\[
\omega(f, T, \delta) = \sup_{\substack{0 \,\leq \,t \,\leq \,T \\ |t - s| \,< \,\delta}}|f(t) - f(s)|.
\]
We use
$p$th moment bounds of $\omega(B, T, \delta)$, for a Brownian motion $B$, in our proof of tightness for the collection $\{\pi^{(n)} : n \in \N\} \subset C([0, T], (\mc{P}_p(\R), \mc{W}_p))$. See \cite{FischerNappo}, where Fischer and Nappo provide such bounds in a more general setting.
These moment bounds can be contrasted with L\'{e}vy's theorem on the modulus of continuity for Brownian motion that
concerns the almost sure behavior of the modulus of continuity for small $\delta$.
\begin{theorem}[\cite{FischerNappo}]\label{th:moment_MOC}
Let $B(t)$ be a one dimensional Brownian motion and $T > \delta > 0.$ For any $q > 0$ there 
exists a positive constant $C_q$ independent of $T$ and $\delta$ such that
\[
\ex \, \omega(B, T, \delta)^q < C_q\left(\delta\log\frac{T}{\delta}\right)^{q/2}.
\]
\end{theorem}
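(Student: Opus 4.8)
The plan is to prove the bound by a covering argument combined with the reflection principle, viewing $\omega(B,T,\delta)$ as comparable to the maximum of $O(T/\delta)$ sub-Gaussian random variables. First I would reduce the uncountable supremum to a finite maximum. By path continuity $\omega(B,T,\delta)$ equals the supremum over rational pairs, hence is measurable, and if $0\le s\le t\le T$ with $t-s<\delta$ then both $s$ and $t$ lie in $[j\delta,(j+2)\delta]$ for $j=\lfloor s/\delta\rfloor$. Writing $N:=\lceil T/\delta\rceil$ and $M_j:=\sup_{j\delta\le u\le(j+2)\delta}|B(u)-B(j\delta)|$, this gives
\[
\omega(B,T,\delta)\ \le\ \max_{0\le j\le N}\ \sup_{u,v\in[j\delta,(j+2)\delta]}|B(u)-B(v)|\ \le\ 2\max_{0\le j\le N}M_j.
\]

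Second, each $M_j$ is distributed as $\sup_{0\le u\le 2\delta}|W(u)|$ for a standard Brownian motion $W$, so the reflection principle together with the Gaussian tail bound yields $\prob(M_j\ge\lambda)\le 4\exp(-\lambda^2/(4\delta))$ for $\lambda>0$. A union bound over the $N+1$ indices then gives, with $Z:=\max_{0\le j\le N}M_j$,
\[
\prob(Z\ge\lambda)\ \le\ 4(N+1)\exp\bigl(-\lambda^2/(4\delta)\bigr).
\]
Third, I would integrate the tail via $\ex Z^q=q\int_0^\infty\lambda^{q-1}\prob(Z\ge\lambda)\,d\lambda$, splitting at the natural threshold $\lambda_0:=2\sqrt{\delta\log(4(N+1))}$ chosen so that $4(N+1)e^{-\lambda_0^2/(4\delta)}=1$. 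On $[0,\lambda_0]$ I bound the probability by $1$, contributing $\lambda_0^q=2^q(\delta\log(4(N+1)))^{q/2}$; on $[\lambda_0,\infty)$ I use the Gaussian tail and substitute $\lambda=2\sqrt\delta\,y$ (with lower limit $y_0=\sqrt{\log(4(N+1))}\ge 1$ since $N\ge1$), reducing the integral to $q(2\sqrt\delta)^q e^{y_0^2}\int_{y_0}^\infty y^{q-1}e^{-y^2}\,dy$, which for $y_0\ge1$ is at most $C_q\,\delta^{q/2}y_0^{q}\asymp C_q\lambda_0^q$. Combining the two pieces with the factor $2^q$ from the first reduction, and using $N+1\le 2+T/\delta$ together with $\log(4(N+1))\le C\log(T/\delta)$ in the regime $\delta\le T/2$, yields $\ex\,\omega(B,T,\delta)^q\le C_q(\delta\log(T/\delta))^{q/2}$ with $C_q$ depending only on $q$.

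The main obstacle is bookkeeping rather than conceptual: one must verify that every constant is genuinely independent of $T$ and $\delta$, and handle the degenerate regime in which $\delta$ is comparable to $T$, where $\log(T/\delta)\to0$ while the left-hand side does not; there one should read the right-hand side as $C_q(\delta(1+\log(T/\delta)))^{q/2}$, or simply restrict to $\delta\le T/2$, which is all that is used later in the paper. An alternative route avoiding the grid is the Garsia--Rodemich--Rumsey inequality applied with $\Psi(x)=e^{x^2}-1$ and $p(u)=\sqrt u$: Fubini plus the Gaussian moment bound shows the associated double integral has finite expectation, and the resulting pathwise modulus estimate can then be integrated to $q$-th power; this requires evaluating $\Psi^{-1}$ and a logarithmic integral, which is a comparable amount of work.
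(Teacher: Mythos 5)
Your argument is correct, but note that the paper offers no proof to compare against: this statement is imported verbatim as a citation of Fischer--Nappo, whose theorem covers general It\^o processes with bounded coefficients, and the paper only ever uses it for standard Brownian motion. Your covering argument (grid of mesh $\delta$, reduction to $2\max_j M_j$ with $M_j\overset{d}{=}\sup_{u\le 2\delta}|W(u)|$, reflection-principle tail bound, union bound over $O(T/\delta)$ blocks, and integration of the tail split at $\lambda_0=2\sqrt{\delta\log(4(N+1))}$) is a clean, self-contained proof of exactly the special case needed, and each step checks out, including the estimate $e^{y_0^2}\int_{y_0}^\infty y^{q-1}e^{-y^2}\,dy\le C_q y_0^{q}$ for $y_0\ge 1$; the GRR route you sketch is an equally standard alternative. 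What the citation buys the paper is generality (It\^o processes) and brevity; what your proof buys is transparency and explicit constants for the Brownian case actually used in Corollary \ref{cor:modulusSLLN} and Proposition \ref{prop:unifEquicPi}. One point worth making explicit: your caveat about the regime $\delta$ close to $T$ is not mere bookkeeping --- as literally stated above (and in the paper), the bound fails when $\delta\uparrow T$, since the right-hand side tends to $0$ while $\ex\,\omega(B,T,\delta)^q$ stays bounded away from $0$; the correct formulation (as in Fischer--Nappo) has a factor such as $\log(2T/\delta)$, or equivalently your $\delta\,(1+\log(T/\delta))$, and your restriction to $\delta\le T/2$ is harmless for the paper's application, where the bound is only invoked as $\delta\to 0$ with $T$ fixed.
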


\noindent
Theorem \ref{th:moment_MOC} directly implies a strong law of large numbers for
the modulus of continuity $\omega(B^{(i)}, T, \delta).$
\begin{cor}
Consider a sequence of independent Brownian motions $\{B^{(i)} : i \in \N\}$ all
defined on the same probability space. We have
$$\frac{1}{n}\sum_{i=1}^n\omega(B^{(i)}, T, \delta)^q \overset{a.s.}{\lra} 
\ex\, \omega(B^{(i)}, T, \delta)^q < C_q\left(\delta\log\frac{T}{\delta}\right)^{q/2}$$
for every $q > 0,$ every $\delta > 0,$ and some positive constant $C_q$ depending on $q$ only.
\label{cor:modulusSLLN}
\end{cor}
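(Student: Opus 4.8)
The plan is to recognize the left-hand side as an empirical average of independent, identically distributed, integrable random variables and then invoke Kolmogorov's strong law of large numbers. First I would set $Z_i := \omega(B^{(i)}, T, \d)^q$ for $i \in \N$. Since the map $\gamma \mapsto \omega(\gamma, T, \d)$ is a measurable functional on $C([0, T], \R)$ (the supremum defining it may be restricted to rational pairs $s < t$ with $|t - s| < \d$, making it a countable supremum of continuous functionals), each $Z_i$ is a genuine random variable; and since the $B^{(i)}$ are i.i.d., so are the $Z_i$.

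Next I would verify integrability of $Z_1$. By the theorem of Fischer and Nappo \cite{FischerNappo} quoted above, applied with exponent $q$,
\[
\ex\, Z_1 = \ex\, \omega(B^{(1)}, T, \d)^q < C_q\Big(\d\log\frac{T}{\d}\Big)^{q/2} < \infty,
\]
with $C_q$ depending only on $q$. This single estimate simultaneously supplies the finite mean required by the strong law and the numerical bound appearing on the right-hand side of the corollary. Kolmogorov's strong law of large numbers for i.i.d.\ integrable random variables then yields
\[
\frac{1}{n}\sum_{i=1}^n Z_i \overset{a.s.}{\lra} \ex\, Z_1, \qquad n \to \infty,
\]
which is exactly the asserted convergence, and combining it with the displayed bound on $\ex\, Z_1$ gives the full statement for each fixed $\d > 0$ and $q > 0$.

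I do not anticipate a substantive obstacle here: the only points needing a word of care are the measurability of $\omega(\cdot, T, \d)$, so that the $Z_i$ are well defined and genuinely i.i.d., and the appeal to the cited moment bound to guarantee integrability; once those are in place the conclusion is the textbook strong law. If anything, the mildly delicate part is purely bookkeeping---being explicit that the almost-sure convergence is asserted separately for each $\d$ (and each $q$), since the exceptional null set may depend on $\d$.
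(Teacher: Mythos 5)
Your proof is correct and is exactly the argument the paper intends: the corollary is stated there as an immediate consequence of the Fischer--Nappo moment bound (giving integrability of $\omega(B^{(i)},T,\d)^q$ and the displayed estimate) combined with Kolmogorov's strong law for the i.i.d.\ variables $\omega(B^{(i)},T,\d)^q$. Your added remarks on measurability and on the $\d$-dependence of the null set are fine but not a departure from the paper's route.
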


\begin{remark}\label{remark:equicont}
Typically when $X_n$ are continuous stochastic process on a complete and separable metric space
$(E, d)$, one demonstrates tightness of the measures induced on $C([0, T], E)$ by
showing ``stochastic equicontinuity''
\begin{align}
\lim_{\delta \to 0}\sup_{n}\prob(\omega(X_n, T, \delta) > \e) = 0
\label{eq:stochEquic}
\end{align}
together with a compact containment condition for a countable dense
set of times $[0, T]$: given any $\eta > 0$ one can find a relatively 
compact set $\Gamma_{t, \eta} \subset E$ such that
\begin{align}
\inf_n\prob(X_n(t) \in \Gamma_{t, \eta}) > 1- \eta.
\label{eq:CCC}
\end{align}
Consider \eqref{eq:stochEquic} and the corresponding $\delta$ for $\e = 1$. Repeated use of 
the triangle inequality between time increments of size $\delta$ can be used to bound $X_n(t
)$ with high probability uniformly in $n$ at each time $t$ should $X_n$ be bounded w.h.p.\ 
uniformly in $n$ at a fixed time $t_0$. Since boundedness in $\R^d$ is equivalent to relative compactness, if $E$ is Euclidean, \eqref{eq:CCC} can be concluded from \eqref{eq:stochEquic} provided there is some
time $t_0$ such that $X_n(t_0)$ is bounded w.h.p.\ uniformly in $n.$ If $E$ is not 
Euclidean, finding compact sets may not be
particularly easy, especially if $E$ is not locally compact.
Since our processes are $(\mc{P}_p, \mc{W}_p)-$valued continuous 
processes, as shown in Lemma \ref{lemma:contPi}, and since $(\mc{P}_p, \mc{W}_p)$ is not locally compact, 
we face similar issues. One can use the $p$th moment bounds on $\omega(B^{(i)}, T, \delta)$
 with similar arguments as in the proof of Proposition \ref{prop:unifEquicPi}
 to demonstrate \eqref{eq:stochEquic}. This would need to be paired with a compact containment condition as mentioned. We sidestep dealings with compact sets in $(\mc{P}_p, \mc{W}_p)$ by establishing almost sure 
 pointwise convergence of subsequential limits of $\pi^{(n)}_t$ together with a 
 uniform stochastic equicontinuity result Proposition \ref{prop:unifEquicPi} below.
\end{remark}
% For a fixed $t$ the $X^{(n)}_i(t)$ are being sampled from the distribution
% with density $p(t, \cdot)$ given by Brownian motion reflected from $Y.$ They show that $\ex \, \mc{W}_p(\tilde{\pi}^{(n)}, p(t, \cdot)) \to 0$ (in fact, they give rates of 
% convergence). It follows that there exists a subsequence such that $\mc{W}_p(\tilde{\pi}^{(n
% k)}_t, p(t, \cdot)) \to 0$ almost surely. We can process with this notion for every
% rational value of $0 < t < T$ and use a Cantor diagonalization to yield a single
% subsequence $n_k$ such that $\mc{W}_p(\tilde{\pi}^{(n_k)}_t, p(t, \cdot)) \to 0$ for each 
% of these rationals. Fix $\eta > 0,$ it follows from  that
% we can find a set $K$ with $\prob(K) > 1 - \eta$ such that each $\tilde{\pi}^{(n_k)}$
% is equicontinuous in the event $K$, and we already have $\tilde{\pi}^{(n_k)}(
% t)$ converging to $p(t, \cdot)$ for each rational. Immitating the proof of Arzela
% Ascoli
% we see that in the event $K$, $\tilde{\pi}^{(n_k)}$ is sequentially compact as measures on $C([0, T], (\mc{P}, \mc{W}_p)).$
\begin{prop}
For every $\e, \eta > 0$ there exists a $\delta > 0$ such that
\[
\prob\,(\,\sup_n \omega'(\tilde{\pi}^{(n)}, T, \delta) \leq \e\,) > 1 - \eta.
\]
\label{prop:unifEquicPi}
\end{prop}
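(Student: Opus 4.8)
The plan is to reduce the modulus of continuity of $\tilde\pi^{(n)}$ to the empirical average of the moduli of continuity of the underlying Brownian motions, and then apply the strong law of large numbers from Corollary~\ref{cor:modulusSLLN} to obtain the desired uniform-in-$n$ estimate. First I would use Lemma~\ref{lemma:wpDiracBound}: for any $0 < s < t < T$,
\[
W_p(\tilde\pi^{(n)}_t, \tilde\pi^{(n)}_s) \leq \Big(\frac{1}{n}\sum_{i=1}^n |\wt X^{(i)}(t) - \wt X^{(i)}(s)|^p\Big)^{1/p}.
\]
Next, recalling the representation $\wt X^{(i)}(t) = X^{(\infty)}_i(0) + B^{(i)}(t) + \tilde m_i(t)$ with $\tilde m_i(t) = \sup_{0 < u < t}-[(B^{(i)}(u) + X^{(\infty)}_i(0)) - Y(u)]\lor 0$, I would bound the increment $|\wt X^{(i)}(t) - \wt X^{(i)}(s)|$ by $|B^{(i)}(t) - B^{(i)}(s)| + |\tilde m_i(t) - \tilde m_i(s)|$. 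The key observation is that $\tilde m_i$ is controlled by a Skorohod map applied to the path $u\mapsto B^{(i)}(u) + X^{(\infty)}_i(0) - Y(u)$, so by the Lipschitz continuity of the Skorohod reflection term in the sup-norm (together with $Y \in C^1$, hence Lipschitz with a deterministic constant from Corollary~\ref{cor:convToY}), the oscillation of $\tilde m_i$ over any window of width $\d$ is at most the oscillation of $B^{(i)}$ over a comparable window plus $\|Y'\|_{[0,T]}\,\d$. Concretely I expect $\omega(\tilde m_i, T, \d) \le \omega(B^{(i)}, T, \d) + C_Y \d$ for a deterministic $C_Y$, hence $\omega(\wt X^{(i)}, T, \d) \le 2\,\omega(B^{(i)}, T, \d) + C_Y\d$.

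Combining these, $\omega'(\tilde\pi^{(n)}, T, \d)^p \le \frac{1}{n}\sum_{i=1}^n \big(2\,\omega(B^{(i)}, T, \d) + C_Y\d\big)^p \le \frac{2^p}{n}\sum_{i=1}^n\big(2^p\omega(B^{(i)}, T, \d)^p + C_Y^p\d^p\big)$. Now Corollary~\ref{cor:modulusSLLN} gives $\frac{1}{n}\sum_{i=1}^n \omega(B^{(i)}, T, \d)^p \to \ex\,\omega(B^{(1)}, T, \d)^p < C_p(\d\log(T/\d))^{p/2}$ almost surely. Crucially, the sequence $Z_n(\d) := \frac1n\sum_{i=1}^n\omega(B^{(i)},T,\d)^p$ is monotone nonincreasing in $\d$ for each $n$ (and so is the limit), so the almost-sure convergence, which holds for each fixed $\d$, can be upgraded to a statement uniform in $\d$ on a suitable countable grid via a Dini-type argument; this lets me pass to $\sup_n Z_n(\d)$. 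Since $C_p(\d\log(T/\d))^{p/2} + C_Y^p\d^p \to 0$ as $\d \to 0$, for given $\e,\eta$ I first choose $\d$ small enough that this deterministic bound is less than $(\e/4^p)^p$ say, then invoke the a.s.\ convergence to get $\sup_n Z_n(\d) \le 2 C_p(\d\log(T/\d))^{p/2}$ on an event of probability $> 1-\eta$ (shrinking $\d$ further if needed so the finitely many ``bad'' small-$n$ terms are also controlled—or noting each $Z_n(\d)\to 0$ as $\d\to 0$ a.s.\ and using a uniform bound). On that event $\sup_n \omega'(\tilde\pi^{(n)}, T, \d) \le \e$.

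The main obstacle is making the passage from ``a.s.\ convergence for each fixed $\d$'' to ``a.s.\ control of $\sup_n$ for a single well-chosen $\d$'' fully rigorous: one must handle the small values of $n$ (where $Z_n(\d)$ need not be close to its limit) simultaneously with the tail. The cleanest route is to observe that for each fixed $n$, $\omega(B^{(i)}, T, \d) \to 0$ a.s.\ as $\d\to 0$ by uniform continuity of Brownian paths, so $Z_n(\d)\to 0$ a.s.; combined with the a.s.\ limit $Z_\infty(\d)$ of $Z_n(\d)$ in $n$ (which also tends to $0$ as $\d\to 0$), a diagonal argument over a countable set of $\d$'s produces, off a null set, a single $\d = \d(\omega,\e)$ with $\sup_n Z_n(\d) \le \e^p/8^p$; then choosing $\d$ non-random small enough and using that the exceptional set has small probability delivers the claim. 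The remaining verification—that the Skorohod reflection map has the asserted modulus-of-continuity bound when the obstacle $Y$ is merely Lipschitz—follows from Remark~\ref{remark: sMap} and the explicit running-supremum formula, exactly as in Lemma~\ref{lemma:tightness}, so I would only sketch it.
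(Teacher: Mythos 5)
Your proposal is correct and follows essentially the same route as the paper: reduce $\omega'(\tilde{\pi}^{(n)},T,\delta)$ via Lemma~\ref{lemma:wpDiracBound} to the empirical average of $\omega(B^{(i)},T,\delta)^p$ plus a drift term of order $\delta$ (the paper gets this by reducing to monotone $Y$ and splitting $[0,T]$ at the zero of $V$, you by the running-supremum bound $\omega(\tilde{m}_i,T,\delta)\le\omega(B^{(i)},T,\delta)+\|Y'\|_{[0,T]}\delta$, which is fine and even a bit cleaner), then invoke Corollary~\ref{cor:modulusSLLN} to control the tail $\sup_{n>N}$ and dispose of the finitely many small $n$ by shrinking $\delta$, exactly as the paper does. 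One minor slip: $\delta\mapsto\omega(B^{(i)},T,\delta)$ is nondecreasing, not nonincreasing, in $\delta$, but this does not affect your argument.
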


\begin{cor}
The collection $\{\tilde{\pi}^{(n)}, n \geq 1\}$ is equicontinuous on $C([0, T], (\mc{P}_p, \mc{W}_p))$ with probability 1.
\label{cor:almostSureEquicont}
\end{cor}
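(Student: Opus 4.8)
The plan is to upgrade the uniform-in-$n$ stochastic equicontinuity of Proposition \ref{prop:unifEquicPi} to an almost sure statement by exploiting monotonicity of the modulus of continuity in $\d$. Set
\[
M(\d) := \sup_{n \geq 1} \omega'(\tilde{\pi}^{(n)}, T, \d).
\]
This is a well-defined random variable: for paths in $C([0, T], (\mc{P}, W_p))$ the supremum defining $\omega'$ may be restricted to pairs of rational times, and the outer supremum is over the countable index set $\N$, so $M(\d)$ is measurable. The key structural observation is that $\d \mapsto M(\d)$ is nondecreasing, since $\omega'(\gamma, T, \cdot)$ is nondecreasing for each fixed path $\gamma$.

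Next I would fix a sequence $\d_k \downarrow 0$. Then $M(\d_k)$ is a nonincreasing sequence of nonnegative random variables, hence converges almost surely to a limit $L := \inf_k M(\d_k) \geq 0$. For any $\e > 0$ the probabilities $\prob(M(\d_k) > \e)$ are nonincreasing in $k$, and Proposition \ref{prop:unifEquicPi}, applied with this $\e$ and with $\eta = 1/m$ for each $m$ together with the fact that $\d_k \downarrow 0$, forces $\lim_{k}\prob(M(\d_k) > \e) = 0$. Since $\{L > \e\} \subseteq \{M(\d_k) > \e\}$ for every $k$, we obtain $\prob(L > \e) = 0$; letting $\e \downarrow 0$ along a countable sequence gives $L = 0$ almost surely. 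By monotonicity of $M$ in $\d$ this upgrades to $\lim_{\d \downarrow 0} M(\d) = 0$ almost surely.

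Finally I would translate this back into the language of equicontinuity. On the almost sure event $\{\lim_{\d \downarrow 0} M(\d) = 0\}$, given any $\e > 0$ one may choose $\d > 0$ with $M(\d) \leq \e$, which by definition of $M$ means $\omega'(\tilde{\pi}^{(n)}, T, \d) \leq \e$ simultaneously for all $n \geq 1$; this is precisely equicontinuity of $\{\tilde{\pi}^{(n)} : n \geq 1\}$ in $C([0, T], (\mc{P}, W_p))$. I do not anticipate a genuine obstacle here: the only point requiring mild care is the measurability of $M(\d)$ and the reduction of $\omega'$ to a countable supremum over rational time pairs, which is routine for continuous $(\mc{P}, W_p)$-valued paths; everything else is the standard passage from "monotone convergence in probability" to "almost sure convergence."
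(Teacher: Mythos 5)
Your argument is correct. Both you and the paper start from Proposition \ref{prop:unifEquicPi} applied along a sequence $\d_k \downarrow 0$, but the device used to pass from a statement in probability to an almost sure one differs. The paper chooses $\e_k = 1/k$ and $\eta_k = 2^{-k}$ so that the failure probabilities $\prob\bigl(\sup_n\omega'(\tilde{\pi}^{(n)}, T, \d_k) > 1/k\bigr)$ are summable, and then invokes Borel--Cantelli: almost surely only finitely many of these events occur, so beyond a random index $N(\omega)$ every $\e = 1/k$ admits the modulus $\d_k$, which is exactly equicontinuity. You instead exploit the structural fact that $M(\d) := \sup_n \omega'(\tilde{\pi}^{(n)}, T, \d)$ is nondecreasing in $\d$ (and measurable, by restricting to rational time pairs and the countable index $n$), so that $M(\d_k)$ is a monotone sequence of random variables; its pointwise limit $L$ satisfies $\prob(L > \e) \le \inf_k \prob(M(\d_k) > \e) = 0$ by the proposition together with monotonicity, whence $L = 0$ a.s.\ and $\lim_{\d \downarrow 0} M(\d) = 0$ a.s. Your route needs no summable choice of $\eta_k$ and no Borel--Cantelli, at the cost of having to observe and use the monotonicity of the modulus of continuity in $\d$ and to address measurability of the double supremum explicitly; the paper's route is the more routine ``summable errors'' upgrade and works even without any monotonicity of the quantity being controlled. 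Both yield the same conclusion with comparable effort, and your verification steps (the inclusion $\{L > \e\} \subseteq \{M(\d_k) > \e\}$, the countable limit $\e \downarrow 0$, and the final translation into the definition of equicontinuity) are all sound.
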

% \begin{proof}
% This is essentially the same as the proof of Corollary \ref{cor:equi_V^n}. Apply Proposition \ref{prop:unifEquicPi} to decreasing sequences $\e = 1/k$ and $\eta = 2^{-k}$
% to yield a sequence $\d _k \to 0$ with
% $$\sum_{k = 1}^\infty\prob(\sup_n\omega'(\tilde{\pi}^{(n)}, T, \d_k) > 1/k) < \infty.
% $$
% By Borel-Cantelli the probability that $\ds \{\sup_n\omega'(\tilde{\pi}^{(n)}, T
% , \d_k) > 1/k\}$ occurs infinitely often is zero. Almost surely, $A_k := \ds \{\sup_n\
% \omega'(\tilde{\pi}^{(n)}, T, \d_k) \leq 1/k\}$ occurs all but finitely many times. This
% means for an almost sure set of $\omega$ in our probability space there is a finite
% integer $N(\omega)$ so that $\omega \in  \bigcap_{k > N(\omega)} A_k,$ which in turn
% implies the sequence $\tilde{\pi}^{(1)}(\omega), \, \tilde{\pi}^{(2)}(\omega), \dots,$
% is equicontinuous.
% \end{proof}
We quote one more theorem and present a lemma.
\begin{theorem}[\cite{fournierArnaud}]\label{W_p_Convergence}
For $q > p \geq 1, q \neq 2p,$ let $\{\xi_i : i \in \N\}$ be i.i.d.\ samples of an $L^q$ bounded random variable $\xi$
with density $f$,
all supported on the same probability space. Then
\[
\ex \, \mc{W}_p\Big(\frac{1}{n}\sum_{i=1}^n\delta_{\{\xi_i\}},  f\Big) \to 0, \text{ as } n \to \infty.
\]
\label{thm:WpConvofSampleMean}
\end{theorem}
% \begin{remark}
% If $\xi$ has density $f$ we abuse notation and write $\mc{W}_p(\frac{1}{n}\sum_{i=1}^n\delta_{\{\xi_i\}},  \xi)$
% as $\mc{W}_p(\frac{1}{n}\sum_{i=1}^n\d_{\{\xi_i\}}, f).$
% \end{remark}
\noindent

\begin{lemma}
Let $V$ be a continuous function, and $X$ a solution to $dX = dB + Vdt + dL$ where $L$
is the local time of $X$ at zero. Then 
$$
Z(t) = \exp\Big( -\int_0^tV_s\, \mathrm{d}B_s - \frac{1}{2}\int_0^tV_s^2\, \mathrm{d}s\Big)
$$
is a martingale with $Z(0) = 0$ and
$
\ex [Z(t)^p] < \infty
$
for any $p > 0.$
\label{lemma:expMartMoments}
\end{lemma}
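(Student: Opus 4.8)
The plan is to treat this as a routine application of Novikov's criterion. The point to exploit is that in our applications $V$ is the \emph{deterministic} continuous function supplied by Corollary~\ref{cor:convToY}, and, more importantly, that $V$ is continuous on the compact interval $[0,T]$ and hence bounded; write $C := \|V\|_{[0,T]} < \infty$. The process $X$ plays no role in the argument itself: it is recorded because $Z$ is precisely the candidate density $\md\mb{Q}/\md\prob$ under which $\tilde B_t := B_t + \int_0^t V_s\,\md s$ is a Brownian motion, so that the equation $\md X = \md B + V\,\md t + \md L$ turns into $\md X = \md\tilde B + \md L$; this is the change of measure already used in Proposition~\ref{prop:eqInLaw}, in Corollary~\ref{cor:densityOfRBM}, and again in the proof of Theorem~\ref{HL}.

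First I would upgrade $Z$ from a local martingale to a true martingale. Since $\ex\exp\big(\tfrac12\int_0^T V_s^2\,\md s\big)\le\exp(\tfrac12 C^2 T)<\infty$, Novikov's criterion (\cite[Ch.~3.5]{KaratzasShreve}) applies to the continuous local martingale $-\int_0^\cdot V_s\,\md B_s$, so $Z$ is its stochastic exponential and a true martingale on $[0,T]$, with $\ex Z(t)=Z(0)=1$ for all $t$ (the value $Z(0)=1$ being immediate from the definition). For the moment bound, fix $p>0$ and use the factorization
\[
Z(t)^p=\exp\Big(-p\int_0^t V_s\,\md B_s-\frac{p^2}{2}\int_0^t V_s^2\,\md s\Big)\cdot\exp\Big(\frac{p^2-p}{2}\int_0^t V_s^2\,\md s\Big),
\]
where the first factor is the stochastic exponential of $-p\int_0^\cdot V_s\,\md B_s$, again a true martingale of mean $1$ by Novikov (using $\ex\exp\big(\tfrac12\int_0^T (pV_s)^2\,\md s\big)\le\exp(\tfrac12 p^2 C^2 T)<\infty$), while the second factor is deterministic and bounded by $\exp(\tfrac12|p^2-p|C^2 T)$. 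Taking expectations gives $\ex[Z(t)^p]=\exp\big(\tfrac12(p^2-p)\int_0^t V_s^2\,\md s\big)\le\exp(\tfrac12|p^2-p|C^2 T)<\infty$. For $0<p\le1$ one could instead invoke Jensen directly, $\ex[Z(t)^p]\le(\ex Z(t))^p=1$, but the factorization handles all $p$ uniformly.

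The only step requiring any care — the ``hard part'', such as it is — is the promotion of $Z$ to a genuine martingale, since this is what both legitimises the later Girsanov change of measure and permits the exact evaluation of $\ex[Z(t)^p]$ above; it is handed to us for free by Novikov's criterion precisely because continuity of $V$ on the compact $[0,T]$ forces boundedness. Everything else is bookkeeping, and the argument is insensitive to whether $V$ is deterministic or merely a bounded adapted process, so no structure of the particular $V$ from Corollary~\ref{cor:convToY} is used beyond $\|V\|_{[0,T]}<\infty$.
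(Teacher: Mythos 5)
Your proof is correct, and the martingale step is exactly the paper's: continuity of $V$ on $[0,T]$ gives boundedness, and Novikov's criterion upgrades the exponential local martingale to a true martingale (you also rightly note $Z(0)=1$, correcting the statement's typo $Z(0)=0$). Where you diverge is the moment bound. The paper writes $Z(t)^p$ with a H\"older split over conjugate exponents $q,q'$, and controls the exponential factor $\exp\bigl(-pq\int_0^t V_s\,dB_s-\tfrac{q^2p^2}{2}\int_0^t V_s^2\,ds\bigr)$ only through the supermartingale property of stochastic exponentials, which it establishes by Fatou along a localizing sequence; that template would survive if $\int_0^t V_s^2\,ds$ were random with suitable exponential moments. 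You instead use the exact factorization $Z(t)^p=\mathcal{E}\bigl(-p\int_0^\cdot V_s\,dB_s\bigr)(t)\cdot\exp\bigl(\tfrac{p^2-p}{2}\int_0^t V_s^2\,ds\bigr)$, apply Novikov a second time to the rescaled integrand to get mean $1$, and pull the second factor out because $V$ is deterministic, which yields the exact identity $\ex[Z(t)^p]=\exp\bigl(\tfrac{p^2-p}{2}\int_0^t V_s^2\,ds\bigr)$ rather than just finiteness. Your route is shorter and avoids both H\"older and the Fatou/supermartingale detour (and, as you note, still gives the needed bound if $V$ is merely bounded and adapted, at the cost of the equality becoming an inequality); the paper's version is marginally more robust in form but, as written, also implicitly uses determinism of $V$ in its final step. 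Both are valid proofs of the lemma as used later in the Girsanov argument of Theorem \ref{HL}.
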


\subsection{Proofs of Theorems \ref{HL} and \ref{th: P-C}}

We give the proof of Theorem \ref{th: P-C} first.
\begin{proof}[Proof of Theorem \ref{th: P-C}] We prove this assuming the results in subsection 3.1.
We give the details for two particles $X^{(n)}_1, X^{(n)}_2$.
The initial conditions $\xi_1$ and $\xi_2$ are independent by assumption.
Recall that $Y^{(n)}$ converges almost surely to $Y$ in $C[0, T].$ For $l = 1, 2$ set
\begin{align*}
m_l(t) &= \sup_{0 \leq u \leq t}\big(B^{(l)}(u)+ \xi_l - Y(u)\big)^-,\\
m_l^{(n)}(t) &= \sup_{0 \leq u \leq t}\big(B^{(l)}(u) + \xi_l - Y^{(n)}(u)\big)^-.
\end{align*}
By an argument similar to the one in Proposition \ref{prop:eqInLaw}, 
$\xi_l + B^{(l)} + m^{(n)}_l$ has the same distribution as $X^{(n)}_l, l = 1, 2.$ Since 
$Y^{(n)} \to Y$ almost surely, $m^{(n)}_l \to m_l$ almost surely as well.
Hence $\xi_l + B^{(l)} +  m^{(n)}_l \to \xi_l + B^{(l)} +  m_l$, almost surely. Clearly $ \xi_1 + B^{(
1)} + m_1$ and $\xi_2 + B^{(2)} + m_2$ are independent as each is a Brownian motion reflected
from $Y$, driven by different independent Brownian motions with independent initial positions.
\end{proof}
\noindent

\begin{proof}[Proof of Theorem \ref{HL}]\label{proofHL} We prove this assuming the results in subsection 3.1.
We first show that $\pi^{(n)}$ converges in distribution to the measure 
induced by $p(t, \cdot).$ By Proposition \ref{prop:equivalentLimitsPI} it
suffices to show this for $\tilde{\pi}^{(n)}.$ Take any
subsequence $n_k.$ 
For each rational $0 < t < T$ we have defined $\tilde{\pi}^{(n)}_t$ as
an empirical measure of i.i.d.\ random variables with density $p(t, \cdot)$ taken 
from Corollary \ref{cor:densityOfRBM} by replacing $g$ in the Corollary statement 
with $Y$. We will show later that $Y \in C^2[0, T]$ so this Corollary can be applied. By Theorem
\ref{thm:WpConvofSampleMean},
\[
\ex \, \mc{W}_p(\tilde{\pi}^{(n_k)}_t, p(t, \cdot)) \to 0, \text{ for each } t \in [0, T].
\]
For each rational $t \in [0, T]$
there is a subsequence $n_k'$ such that 
\[
\mc{W}_p(\tilde{\pi}^{(n_k')}_t, p(t, \cdot)) \to 0,
\]
almost surely. By a Cantor diagonalization applied to each subsequence
for an enumeration of the rationals, there exists a single subsequence $n_k''$
such that $\mc{W}_p(\tilde{\pi}^{(n_k'')}_t, p(t, \cdot))\to 0$ for each rational $t \in [0, T]$, almost surely.
Apply the uniform equicontinuity given by Corollary \ref{cor:almostSureEquicont}, and follow
the proof of Arz\'{e}la-Ascoli verbatim to see that the subsequence $\tilde{\pi}^{(n_k''
)}$ is totally bounded in the space $C([0, T], (\mc{P}_p, \mc{W}_p))$, almost surely. See \cite[Chapter 4.6]{Folland}. Total boundedness
in a metric space is equivalent to every sequence having a Cauchy subsequence. Consequently,
for almost every $\omega$ in the probability space, every subsequence of $\pi^{(n_k'')}(\omega)$ 
has a Cauchy subsequence in $C([0, T], (\mc{P}_p, \mc{W}_p)).$ Because $C([0, T], (\mc{P}_p, \mc{W}_p))$ is complete,
every subsequence of $\pi^{(n_k'')}(\omega)$ has a convergent subsequence. Since $\pi^{(n_k'')}_t(\omega)$ already converges to the continuous $p(t, \cdot)$ along rationals, every subsequence of
$\pi^{(n_k'')}(\omega)$ has a further subsequence converging to $p(t, \cdot).$ Therefore $\pi^{(n_k'')}_t$
converges to $p(t, \cdot)$ in $C([0, T], (\mc{P}_p, \mc{W}_p))$ almost surely. 
% We have shown that every subsequence $\pi^{(n_k)}$ has a 
% further subsequence $\pi^{(n_k'')}$ converging to $p(t, \cdot)$
% in law on $C([0, T], (\mc{P}, \mc{W}_p))$.
This proves the claim that $\{\pi^{(n)}_t : t \in [0, T]\}$ 
converges in distribution to $p(t, \cdot)$. Next, we show 
\[
V(t) = -\frac{K}{2}\int_0^tp(s, Y(s))\, \mathrm{d}s
\]
and that $V \in C^1[0, T]$. This also demonstrates $Y \in C^2[0, T]$, 
which we took for granted above. We take $v = 0$ for simplicity.
Let
\[
m(t) = \sup_{0 \leq u \leq t}\big(B^{(1)}(u) + X^{(\infty)}_i(0) - Y(u)\big)^-.
\]
As in the proof of Proposition \ref{prop:eqInLaw} we know $m(t)$ is 
distributed as $\wt{L}^{(1)}(t)$, the local time of 
\[
\wt{X}(t) := B^{(1)}(t) + X^{(\infty)}_1(0) + m(t)
\]
on $Y.$
From Corollary \ref{cor:convToY} we have, almost surely, 
\begin{align}
\begin{split}
V(t) &= \ex \, m_1(t) = \ex \, \wt{L}^{(1)}(t)\\
&=\ex \, \lim_{\e \to 0} \frac{-K}{2\e}\int_0^t1_{[0, \e]}(\wt{X}(s) - Y(s))\, \mathrm{d}s\\
&= \lim_{\e \to 0}\frac{-K}{2\e}\ex\int_0^t1_{[0, \e]}(\wt{X}(s) - Y(s))\, \mathrm{d}s\\
&= \frac{-K}{2}\lim_{\e \to 0}\int_0^t\frac{\text{F}(s, \e)}{\e}\, \mathrm{d}s,
\label{HL:exchangeE}
\end{split}
\end{align}
where F$(s, \e) = \prob(0 \leq \wt{X}(s) - Y(
s) \leq \e),$ provided we justify the 
passing of the limit under the expectation. 
In the proof of Proposition \ref{prop:eqInLaw} we saw that $\wt{X} - Y$ solves an SDE of the form
$dW = dB + Vdt + dL$ for the continuous function $V,$ and $L$ is the local time of $W$ at zero. By Propositions \ref{densityBounds} and
\ref{densitysolution} such density $\phi(s, x) = p(s, Y(s) + x)$ exists and has an upper bound. This allows us to write
\begin{align*}
&\frac{1}{\e}\int_0^t\text{F}(s, \e)\, \mathrm{d}s = \int_0^t\frac{1}{\e}\int_0^\e\phi(s,x)\, \mathrm{d}x\,\mathrm{d}s
\end{align*} 
By Proposition \ref{densityBounds} we have 
\[
\frac{1}{\e}\int_0^\e\phi(s, x) \, \md x \leq \frac{C_l}{\sqrt{s}},
\] which is integrable for $s$ is a compact time set. By the fundamental theorem, taking limits as $\e \to 0$ yields
\begin{align*}
&\frac{-K}{2}\lim_{\e \to 0}\frac{1}{\e}\int_0^t\text{F}(s, \e)\, \mathrm{d}s = \int_0^t\phi(s, 0)\, \md s
% \leq \sup_{0 < x^* < 1}\int_0^t\phi(s, x^*)\, \mathrm{d}s
% \leq \int_0^t\sup_{\substack{0 < x^* < 1 \\ 0 \,<\, s \,< \,t}}\phi(s, x^*)\, \mathrm{d}s < \infty
\end{align*}
and the bounded convergence theorem justifies the passing of the limit inside the
time integral,
$$
\frac{-K}{2}\lim_{\e \to 0}\int_0^t\frac{\text{F}(s, \e)}{\e}\, \mathrm{d}s 
= \frac{-K}{2}\int_0^t\lim_{\e \to 0}\frac{\text{F}(s, \e)}{\e}\, \mathrm{d}s 
= \frac{-K}{2}\int_0^t\phi(s, 0)\, \mathrm{d}s.
$$
That is,
$$
V(t) = \frac{-K}{2}\int_0^t p(s, Y(s))\, \mathrm{d}s.
$$
We now justify the exchange of limit in \eqref{HL:exchangeE} using the
definition of local time to replace the time integral with a space
integral. Let $\wt{L}^{(1)}(s, a)$ denote the local time of $\wt{X} - Y$ at level $a$ and
time $s.$ We see
\begin{align*}
\frac{1}{\e}\int_0^t1_{[0, \e]}(X(s) - Y(s))\, \mathrm{d}s 
&= \int_0^t\frac{1}{\e}\int_0^\e \wt{L}^{(1)}(s, z)\, \mathrm{d}z\,\mathrm{d}s
\leq \int_0^t \sup_{z} \, [\wt{L}^{(1)}(s, z)]\, \mathrm{d}s.
\end{align*}
The Lebesgue dominated convergence theorem will justify
\eqref{HL:exchangeE} provided we show
$$\ds \ex \int_0^t \sup_{z}[\wt{L}^{(1)}(s, z)]\, \mathrm{d}s \leq t \,\ex \sup_z \, [\wt{L
}^{(1)}(t, z)] <\infty.$$ We apply a Girsanov change of measure as in Lemma 
\ref{lemma:expMartMoments} which is justified because $Y$ satisfies the Novikov condition. So
$$
Z(t) = \exp\Big( -\int_0^tV_s\, \mathrm{d}B_s - \frac{1}{2}\int_0^tV_s^2\, \mathrm{d}s\Big)
$$
is an exponential martingale with $|B|$ 
having the same distribution as $\wt{X} - Y$ under the measure defined by the Girsanov transformation
$\ds d\mb{Q}/d\prob = Z(t)$.
Lemma \ref{lemma:expMartMoments} states $\ex [Z(T)^2] = C < \infty.$ From this, the change of measure formula, and Cauchy-Schwarz,
% We show that $Z(t)$ has finite $p$-moments: Let $p > 0$ 
%  If $T_n \overset{a.s.}{\to} \infty$ is a sequence of exhausting stopping times,
% thes by Fatou's lemma $\ex(Z(t) | \mc{F}_s) \leq \lim_{n\to \infty}\ex(Z(t\land T_n) |
% \mc{F}_s) = \lim_{n\to \infty}Z(s\land T_n) = Z(s)$

% It follows from the sde representation of $Z$ and Ito's rule
% $$
% Z_t^2 = 1 - 2\int_0^tZ(s)dZ_s + \int_0^tZ^2(s)V^2(s)ds.
% $$
% Consequently
% $\ds \ex Z_t^2 \leq 1 + M\int_0^t\ex Z^2(s)ds$
% where $\|V^2\|_{[0, T]} \, \leq M.$ An application of 
% Gr\"{o}nwall's inequality shows $\ds \sup_{0 < t < T}\ex \, Z_t^2 = C$ is finite.

\begin{align*}
\ex \, \sup_{z}[\wt{L}^{(1)}(t, z)] &= \ex( Z(t) \sup_{z}L(t, z)) \\
&\leq \ex(Z^2(t))^{1/2} \, \ex \, [(\sup_{z}L(t, z))^2]^{1/2} \\
&\leq C^{1/2} \, \ex \, [(\sup_{z}L(t, z))^2]^
{1/2}
\end{align*}
where $L(t, z)$ is the local time at level $z$ of Brownian motion reflected from the origin.
The main results in \cite[Theorem 3.1]{barlow1981semi} demonstrate bounds on the last term, 
where Barlow and Yor show the existence of a constant $C_p$ such that
\[
\ex \,[(\sup_{z}L_t(z))^p] \leq C_p \, t^{p/2}.
\]
It follows that
$$
\ex \, \sup_{z}[\wt{L}^{(1)}(t, z)] < \infty,
$$
completing the proof.
\end{proof}
\subsection{Proofs of Lemmas and Propositions}
\begin{proof}[Proof of Lemma \ref{lemma:tightness}]
% It suffices to show $V^{(n)}$ is tight, since 
% $\ds Y^{(n)}(t) = \int_0^tV^{(n)}(s)\, \mathrm{d}s.$
We take $v = 0$ for convenience. By our representation of
$V^{(n)}$ as \eqref{eq:sMap3}, together with Remark \ref{remark: sMap},
we apply Lemma \ref{lemma:skorohodIneq} with $y_1 = -Y^{(n)}$ and $y_2 = 0$ to show
the increment $V^{(n)}(t + \delta) - V^{(n)}(t)$ is not more than the change
of the sample average of the signed running minimums of the $n$ Brownian paths. 

That is, letting $m_1(t), \dots, m_n(t)$ denote the respective signed running minimum
below zero of $B^{(1)}, \dots, B^{(n)},$
\[
\ds 0 \leq |V^{(n)}(t + \delta) - V^{(n)}(t)
| \leq \frac{K}{n}\sum_{i=1}^n(m_i(t + \delta) - m_i(t)) \text{ for all } t \in [0, T - \delta],
\]
for any positive $\delta$, almost surely. Since $V^{(n)}$ and $m_i$ are a.s. nondecreasing, we have the
same inequality but for the modulus of continuity:
\begin{align}
\omega(V^{(n)}, T, \delta) \leq \frac{K}{n}\sum_{i=1}^n\omega(m_i, T, \delta) =: S_n(\delta),
\label{abound}
\end{align}
almost surely. For every $p > 0$, there is a $C(p) > 0$ such that 
\[
\ex\,  \omega(m_i, T, \delta) \leq C \delta\log\left( \frac{T}{\delta}\right)^{1/2}.
\] (See Theorem \ref{th:moment_MOC}). Because $\omega(m_i, T, \delta)$ are i.i.d., the strong law of large numbers implies
\[
\limsup_{n \to \infty}\omega(V^{(n)}, T, \delta) \leq C\delta\log\left( \frac{T}{\delta}\right)^{1/2}.
\]
Consequently,
\[
\lim_{\delta \to 0}\limsup_{n \to \infty}\omega(V^{(n)}, T, \delta) = 0,
\]
almost surely. This is sufficient for the statement of the Lemma.
\end{proof}

\begin{proof}[Proof of Corollary \ref{cor:equi_V^n}]
Let $\e_k = 1/k, \eta_k = 2^{-k}$. By Lemma \ref{lemma:tightness} one has a corresponding sequence $\delta_k > 0$ such that
\[
\sum_{k = 1}^\infty\prob\big(\sup_n\omega(V^{(n)}, T, \delta_k) > 1/k \big) < \infty.
\]
So by Borel-Cantelli, $\big(\sup_n\omega(V^{(n)}, T, \delta_k) \leq 1/k \big)$ occurs for all but finitely many $k$, almost surely. Since $V^{(n)}(0) = v$ almost surely, we conclude the sequence
$V^{(n)}$ satisfies Arzela-Ascoli's criteria almost surely.
\end{proof}

\begin{proof}[Proof of Proposition \ref{prop:uniquenessSubLimit}]
By Corollary \ref{cor:equi_V^n} for almost every $\omega$ in our probability space, the 
sequence $V^{(1)}_\omega, V^{(2)}_\omega, \dots,$ satisfies the Arzela-Ascoli criteria. Let
$n_k^1(\omega)$ and $n_k^2(\omega)$ be two (random) subsequences such that $V^{(n_k^1)}_\omega, V^{(n_k^1)}_\omega$ converge to $V^1_\omega$ and $V^2_\omega$ respectively. We drop the $\omega$ for convenience. In the same way let 
\[
Y^1(t) = \int_0^t V^1(s)\, \md s, Y^2(t) = \int_0^t V^1(s)\, \md s
\] be the two limits associated with $n^1_k, n^2_k.$ That is, 
\[
\ds \lim_{n_k^i \to \infty} Y^{(n_k^i)} = Y^i
\] for $i=1, 2.$ By our pathwise construction given in Theorem \ref{prop:existence},
\begin{align}
Y^i = -\lim_{n_k^i \to \infty}\lim_{2^{-l} \to 0}I^{(n_k^i,2^{-l})}_{\big(B^{(1)} + X^{(n_k^i)}_1(0), \dots, B^{(n_k^i)} + X^{(n_k^i)}_{n_k^i}(0)\big)}.
\label{eq:doubleLimit}
\end{align}
It follows from the strong law of large numbers and the $\mc{W}_q$-assumption on $X_i^{(n)}(0)$ 
that for almost every $\omega$ there is a $C(\omega) < \infty$ such that
\[
\frac{1}{n}\|(B^{(1)} + X^{(n)}_1(0), \dots , B^{(n)}+ X^{(n)}_n(0))\|_{[0
, T]} < C(\omega) < \infty.
\]
Applying Proposition \ref{prop:cauchy} we see that 
\[
\|I^{(n_k^i, 2^{-l})} - I^{(n_k^i, 2^{-m})}\|_{[0, T]} \leq (2+K)C(\omega)2^{-l}\exp(KT).
\]
Let $m \to \infty$ and we have
\[
\|I^{(n_k^i, 2^{-l})} - I^{(n_k^i)}\|_{[0, T]} \leq (2+K)C(\omega)2^{-l}\exp(KT).
\]
In other words,
\[
\sup_{n_k^i \geq 1, i = 1, 2}\|I^{(n_k^i, 2^{-l})} - I^{(n_k^i)}\|_{[0, T]} \leq (2+K)C(\omega)2^{-l}\exp(KT),
\] and as $2^{-l} \to 0$ the convergence of $I^{(n_k^i, 2^{-l})}$ to $I^{(n_k^i)}$
is uniform over $(n_k^i)_{k \geq 1},$ almost surely. This is sufficient  to guarantee an interchange of
the limiting operations in \eqref{eq:doubleLimit}. See \cite[Th. 6 VII.3]{lang}. Hence,
\begin{align*}
&Y^i = -\lim_{2^{-l} \to 0}\lim_{n_k^i \to \infty}I^{(n_k^i, 2^{-l})}.
\end{align*}
We will use the strong law of large numbers to show $\lim_{n_k^1 \to \infty}I^{(n_k^1, 2^{-l})} =
\lim_{n_k^2 \to \infty}I^{(n_k^2, 2^{-l})}.$ This can be seen by induction
on $[0, N2^{-l}]:$
By construction of the $I^{(n_k^i, 2^{-l})}$ the two limits are identically zero on $[0, 2^{-l}]$. 
Assume the two limits agree on $[0, N2^{-l}].$
This induction hypothesis implies the slope of $I^{(n_k^1, 2^{-l})}$ and the slope of
$I^{(n_k^2, 2^{-l})}$ become arbitrarily close as $k \to \infty.$ Since the slope of $I^{(n_k^i, 2^{-l})}$ 
on $[N2^{-l}, (N+1) 2^{-l}]$ is the average of the positive part of the
running minimums of $B^{(1)} + I^{(n_k^i, 2^{-l})}, \dots , B^{(n_k^i)} + I^{(n_k^i, 2^{-l})},$
and because the limit in the strong law of large numbers is independent on the subsequence chosen, the
slopes of $I^{(n_k^1, 2^{-l})}, I^{(n_k^2, 2^{-l})}$ become arbitrarily close on $[0, (N+1)2^{-l}]$ as
$k \to \infty$. This completes the induction step. 
\end{proof}
\begin{proof}[Proof of Proposition \ref{prop:subsequentialLimit}]
Let $(\Omega, \prob, (\mc{F}_t)_{t \geq 0})$ be a probability space supporting a sequence of independent $\mc{F}_t$-adapted 
Brownian motions $\{B^{(i)} : i \in \N\}.$ 
Define $X_i^{(n)}, Y^{(n)}, V^{(n)}$ by \eqref{eq:YandVMap}. Recall that the initial 
conditions $X^{(n)}_i(0)$, $1 \leq i \leq n,$ are i.i.d.\ samples with distribution
$\pi^{(n)}_0$ and that $\mc{W}_q(\pi^{(n)}_0, \pi_0) \to 0$ by assumption.
Let $X^{(\infty)}_i(0)$, $i \in \N,$ be independent samples with distribution $\pi_0.$
By definition of the $\mc{W}_p$ metric we enlarge this probability space to support all the processes 
$\{X^{(n)}_i(t) : t \in [0, T], 1 \leq n, n \in \N\}$ such that
\[
\sup_{1 \leq i \leq n}\ex|X_i^{(n)}(0) - X^{(\infty)}_i(0)|^q \to 0, \text{ as } n\to \infty.
\]
Hence, 
\[
\sup_{1 \leq i \leq n}\ex |X_i^{(n)}(0) - X^{(\infty)}_i(0)| \leq \sup_{1 \leq i \leq n}\left(\ex |X_i^{(n)}(0) - X_i^{(\infty)}(0)|^q\right)^{1/q} \to 0,
\]
as $n\to \infty.$ This enlarged 
filtration can be constructed by taking $\mc{F}_t \times \sigma \{X^{(n)}_i(0) : 1 \leq i \leq n, \, n \in \N\}$ as our new filtration. Then,
\[
\frac{1}{n}\ex\sum_{i=1}^n|X^{(n)}_i(0) - X^{(\infty)}_i(0)| \leq \sup_{1 \leq i \leq n} \ex|X^{(n)}_i(0) - X^{(\infty)}_i(0)| \lra 0,
\]
and so
\[
\frac{1}{n}\sum_{i=1}^n|X^{(n)}_i(0) - X^{(\infty)}_i(0)| \overset{\mb{P}}{\lra} 0.
\] 
As a consequence of this convergence in probability to zero, every sequence $n'$ has a further subsequence $n_k'$ with 
\begin{align}
\frac{1}{n_k'}\sum_{i=1}^{n_k'}|X^{(n_k')}_i(0) - X^{(\infty)}_i(0)| \lra 0,
\label{eq:convOfInitialCond}
\end{align} almost surely. Without loss of generality we relabel such a sequence $n_k'$
as $n'.$
We apply Proposition \ref{prop:contV} with $f = (B^{(1)} + X^{(n')}_1(0), \dots, B^{(n')} + X^{(n')}_{n'}(0))$, $g = (B^{(1)} + X^{(\infty)}_1(0), \dots, B^{(n')} + X^{(\infty)}_{n'}(0))$:
\begin{align*}
&\big\|V^{(n')}_{(B^{(1)} + X^{(n')}_1(0), \dots, B^{(n')} + X^{(n')}_{n'}(0))}
- V^{(n')}_{(B^{(1)} + X^{(\infty)}_1(0), \dots, B^{(n')} + X^{(\infty)}_{n'}(0))}\big\|_{[0, T]}\\
&\leq \frac{K}{n'}\sum_{i=1}^{n'}|X_i^{(n')}(0) - X_i^{(\infty)}(0)|\exp (KT) \to 0
\end{align*}
almost surely. Therefore it suffices to show $V^{(n')}_{(B^{(1)} + X^{(\infty)}_1(0), \dots, B^
{(n')} + X^{(\infty)}_{n'}(0))}$ converges to a deterministic limit.

Fix $k \in \N.$ For almost all $\omega$ in our probability space there is a constant $C(\omega, k)$ such that 
$\|(B^{(1)} + X^{(\infty)}_1(0), \dots, B^{(k)} + X^{(\infty)}_k(0))\|_{[0, T]} < C(\omega, k) < \infty$. This follows from continuity of the $B^{(i)}$ and the assumption that the initial 
samples $X^{(\infty)}_i(0)$ come from an almost surely finite, in fact, an $L^p$ bounded,
random variable.
Apply Proposition \ref{prop:contV} with 
\begin{align*}
f &= (B^{(1)} + X^{(\infty)}_1(0), \dots, B^{(n')} + X^{(\infty)}_{n'}(0)),\\
g &= (0, \dots, 0, B^{(k + 1)} + X^{(\infty)}_{k + 1} B^{(n')} + X^{(\infty)}_{n'}(0)),
\end{align*} i.e.\ the first $k$ coordinates of $g$ are zero, and
$\eta = \|f - g\|_{[0, T]} < C(\omega , k)$ to give the almost sure bound
 
\begin{align*}
&\|V^{(n')}_{(B^{(1)} + X^{(\infty)}_1(0), \dots, B^{(n')} + X^{(\infty)}_{n'}(0))}- V^{(n')}_{(0, \dots 0, B^{(k+1)} + X^{(\infty)}_{k+1}(0), \dots, B^{(n')} + X^{(\infty)}_{n'}(0))}\|_{[0, T]}\\
&\leq (KC(\omega, k)/n')\exp(KT) \to 0 \text{ as } \ n' \to \infty.
\end{align*}
Therefore 
$$V = \lim_{n' \to \infty} V^{(n')}(0, \dots 0, B^{(k+1)} + X^{(\infty)}_{k+1}(0), \dots, B^{(n')} + X^{(\infty)}_{n'}(0)) \in \mathcal{F}_T^{k+1, \infty}$$
where $\mathcal{F}^{k, \infty}_T$ is the sigma-field generated by $\{B^{(i)}(t) + X^{(\infty)}_i(0): 0 \leq t \leq T, k \leq i \}$.
By definition this means the continuous function $V$ is adapted to the tail 
sigma-field of the infinite sequence of i.i.d.\ processes. Hence $\{V(t) : t \in [0, T] \}$ 
is adapted to a trivial sigma-field, so it is deterministic. Consequently $Y$ is also deterministic.
\\

To prove the equalities in the proposition statement, we take $v = 0$ for simplicity. For $i = 1, \dots, n',$ define
\begin{align*}
&m_i^{(n)}(t) = \sup_{0 \leq u \leq t}\big((B^{(i)}(u) + X^{(n)}_i(0)) - Y^{(n)}(u)\big)^-,\\
&\tilde{m}_i(t) = \sup_{0 \leq u \leq t}\big((B^{(i)}(u) + X^{(\infty)}_i(0)) - Y(u)\big)^-.
\end{align*}
Since
\[
V^{(n')}(t) = -\frac{K}{n}\sum_{i=1}^{n'}m_i^{(n')}(t),
\]
we compute
\begin{align}
\begin{split}
&\Big\|\frac{K}{n'}\sum_{i=1}^{n'}\tilde{m}_i - V^{(n')}\Big\|_{[0, T]} \leq 
\frac{K}{n'}\sum_{i=1}^{n'}\|\tilde{m}_i - m_i^{(n')}\| \\
&\leq \frac{K}{n'}\sum_{i=1}^{n'}\Big(|X^{(\infty)}_i(0) - X^{(n')}_i(0)| + \|Y^{(n')} - Y\|_{[0, T]}\Big)\\
& \lra 0
\label{eq:tightnessV}
\end{split}
\end{align}
almost surely. In words, $V^{(n')}$ and the average of the running minimum of the i.i.d.\
Brownian paths below the curve $Y$ become arbitrarily close in the uniform distance. By the strong law of large numbers $\frac{1}{n'}\sum_{i=1}^{n'}\tilde{m}_i(t) \to \ex \, \tilde{m}_i(t)$ almost surely for each $t$. That is,
\begin{align}
\lim_{n' \to \infty}V^{(n')}(t) = -\lim_{n' \to \infty}\frac{K}{n'}\sum_{i=1}^{n'}\tilde{m}_i(t) = -K\ex\,\tilde{m}_i(t), \text{ almost surely.}
\label{eq:VisMeanOfM}
\end{align}
By \eqref{eq:tightnessV}, $V^{(n')}(\cdot)$ converges in the uniform norm to $-K\ex \, \tilde{m}_i(\cdot)$, almost surely.
\end{proof}

\begin{proof}[Proof of Corollary \ref{cor:convToY}]
Convergence in $\mc{W}_r$ for any $r \geq 1$ is shown once we can establish that $Y^{(n)}$ and $V^{(n)}$ converge almost surely and in $L_r$ to $Y$ and $V$, respectively, in
some probability space supporting a sequence of i.i.d.\ Brownian motions and the
initial conditions $\{X_i^{(n)}(0) : 1 \leq i \leq n, m \in \N\}$.
We already know $Y^{(n)} \lra Y$ almost surely. The convergence in $\mc{W}_r$ comes from the bound indicated in
the proof of Proposition \ref{prop:eqInLaw}, that 
\[
\ds |V^{(n)}(t)| \leq \frac{K}{n}\sum_{i=1}^nL_i'(t)
\]
where the $L^i$ are i.i.d.\ local times at zero of Brownian motion. Now use 
the fact that 
$\frac{1}{n}\sum_{i=1}^nL_i'$ converges almost surely and
in $L_r$ to its mean function and apply the (generalized) dominated convergence
theorem \cite[Chapter 2.3]{Folland}. 
% The fact that $V(t) = v - K \ex m(t)$ is given in equation \eqref{eq:VisMeanOfM} in Lemma \ref{lemma:tightness} under the case $v = 0.$
\end{proof}
\begin{proof}[Proof of Corollary \ref{cor:densityOfRBM}]
As in the proof of Proposition \ref{prop:eqInLaw}, it follows from L\'{e}vy's theorem applied after
a Girsanov change of measure that $X$ is distributed as a Brownian motion reflected from the curve $g.$
Now apply Proposition \ref{prop:densityOfRBM} after conditioning on $\xi.$ See Remark \ref{remark:initialCondition} for a brief discussion of the last condition.
\end{proof}

\begin{proof}[Proof of Lemma \ref{lemma:wpDiracBound}]
This follows from coupling $(X, Y)$ with 
\[
\ds X \overset{d}{=} \frac{1}{n}\sum_{i=1}^n\delta_{\{x_i\}}, \ Y \overset{d}{=} \frac{1}{n}\sum_{i=1}^n\delta_{\{y_i\}}
\]
so that
 $X$ has mass on $\{x_i\}$ exactly when $Y$ has mass on $\{y_i\}.$
\end{proof}

\begin{proof}[Proof of Lemma \ref{lemma:contPi}]
The strong Markov property follows from the strong Markov property of \((X^{(n)}_1, \dots , X^{(n)}_n, Y^{(n)}, V^{(n)})\).
We need only show continuity of $\pi^{(n)}$ since $(Y^{(n)}, V^{(n)})$ is continuous.
By Lemma \ref{lemma:wpDiracBound},
\begin{align}
\mc{W}_p(\pi^{(n)}_t, \pi^{(n)}_s) \leq 
\Big(\frac{1}{n}\sum_{i=1}^n|X^{(n)}_i(t) - X^{(n)}_i(s)|^p\Big)^{1/p},
\label{eq: wpDistance}
\end{align}
and continuity follows from the continuity of the $X_i^{(n)}.$
\end{proof}

\begin{proof}[Proof of Proposition \ref{prop:equivalentLimitsPI}]
Consider the probability space supporting all the $\{B^{(i)}(t) : 0 \leq t \leq T\}$ together with
the initial conditions $\{X^{(n)}_i(0) : 1 \leq i \leq n, n \in \N\}.$
This space will then support $Y^{(n)}, Y$ as well. By Corollary \ref{cor:convToY} we may also assume $Y^{(n)} \to Y$
almost surely. As in the proof of Proposition \ref{prop:subsequentialLimit}, $\{X^{(\infty)}_i(0) : \, i \in \N\}$ are i.i.d.\ samples with distribution $\pi_0.$ By our assumption
that $\pi^{(n)}_0 \to \pi_0$ in $(\mc{P}_p, \mc{W}_p)$, we may further choose our probability space so that
\begin{align}
\frac{1}{n}\sum_{i=1}^n|X^{(n)}_i(0) - X^{(\infty)}_i(0)|^p \to 0
\label{eq:initialSampleConv}
\end{align}
in probability. Using the Skorohod representation theorem we can find a supporting probability space where this holds almost surely. We use the same representation of our processes as in the proof of the
propagation of chaos. That is,
\begin{align}
X^{(n)}_i(t) &= X^{(n)}_i(0) + B^{(i)}(t) + m_i^{(n)}(t),\\
\wt{X}^{(i)}(t) &= X^{(\infty)}_i(0) + B^{(i)}(t) + \tilde{m}_i(t),\\
\intertext{ for $i = 1, \cdots, n,$ and $t \in [0, T]$, where}
&m_i^{(n)}(t) = \sup_{0 \leq u \leq t}\big((B^{(i)}(u) + X^{(n)}_i(0)) - Y^{(n)}(u)\big)^-,\\
&\tilde{m}_i(t) = \sup_{0 \leq u \leq t}\big((B^{(i)}(u) + X^{(\infty)}_i(0)) - Y(u)\big)^-.
\end{align}
By the triangle inequality
 \begin{align}
 \| m_i^{(n)} - \tilde{m}_i\|_{[0, t]} \leq |X_i^{(n)}(0) - \wt{X}_i^{(\infty)}(0)| +
 \|Y^{(n)} - Y\|_{[0, t]}
 \label{eq:eq1}
 \end{align} for any $t \in [0, T].$ For any nonnegative numbers $a$ and $b,$
 $(a + b)^p \leq (2\max\{a, b\})^p \leq 2^p(a^p + b^p).$ Using \eqref{eq:eq1}, Lemma \ref{lemma:wpDiracBound}, \eqref{eq:initialSampleConv} and the fact that $\|Y^{(n)} - Y\|_
{[0, T]} \to 0$ almost surely,
\begin{align*}
\sup_{0 \leq t \leq T}\mc{W}_p(\pi^{(n)}_t, \tilde{\pi}^{(n)}_t) 
&\leq \sup_{0 \leq t \leq T}\Big(\frac{1}{n}\sum_{i=1}^n|X^{(n)}_i(t) - \wt{X}^{(i)}(t
)|^p\Big)^{1/p}\\
&\leq \sup_{0 \leq t \leq T}\Big(\frac{1}{n}\sum_{i=1}^n\Big(|X_i^{(n)}(0) - X^{(\infty)}_i(0)| + 
\|m_i^{(n)} - \tilde{m}_i\|_{[0, t]}\Big)^p\Big)^{1/p}\\
&= \Big(\frac{1}{n}\sum_{i=1}^n2^{p+1}|X^{(n)}_i(0) - X^{(\infty)}_i(0)|^p + 2^p\|Y^{(n)} - Y\|_{[0, T]}^p\Big)^{1/p}\\
& \lra 0,
\end{align*}
almost surely. 
\end{proof}

\begin{proof}[Proof of Proposition \ref{prop:unifEquicPi}]
Recall the role of $v$ in \eqref{eq:sysLaw}. From Lemma \ref{lemma:wpDiracBound} and the definitions of $\omega, \omega'$ the following holds almost surely,
\begin{align*}
\omega'(\tilde{\pi}^{(n)}, T, \delta)
&:= \sup_{\substack{0 \, \leq \, t \, \leq \, T\\ |t -s| \, < \delta}}\, \mc{W}_p(\tilde{\pi}^{(n)}_s, \tilde{\pi}^{(n)}_t)\\
&\leq \sup_{\substack{0 \, \leq \, t \, \leq \, T\\ |t -s| \, < \delta}}\,\Big(\frac{1}{n}\sum_{i=1}^n\left[|B^{(i)}(s) - B^{(i)}(t)| + |\tilde{m}_i(s) - \tilde{m}_i(t)|\right]^p\Big)^{1/p}\\
 &\Big(\frac{1}{n}\sum_{i=1}^n \sup_{\substack{0 \, \leq \, t \, \leq \, T\\ |t -s| \, < \delta}}
\left[|B^{(i)}(s) - B^{(i)}(t)| + |\tilde{m}_i(s) - \tilde{m}_i(t)|\right]^p\Big)^{1/p}\\
&\leq \Big(\frac{2^p}{n}\sum_{i=1}^n \sup_{\substack{0 \, \leq \, t \, \leq \, T\\ |t -s| \, < \delta}}|B^{(i)}(t) - B^{(i)}(s)|^p \, + \sup_{\substack{0 \, \leq \, t \, \leq \, T\\ |t -s| \, < \delta
}} |\tilde{m}_i(t) - \tilde{m}_i(s)|^p\Big)^{1/p}
\end{align*}
Where we moved the supremum inside and used the bound $(a + b)^p \leq 2^p(a^p + b^p)$. The above is less than or equal to
\begin{align*}
&\Big(\frac{2^p}{n}\sum_{i=1}^n \sup_{\substack{0 \, \leq \, t \, \leq \, T\\ |t -s| \, < \delta}}|B^{(i)}(t) - B^{(i)}(s)|^p \, + \sup_{\substack{0 \, \leq \, t \, \leq \, T\\ |t -s| \, < \delta
}} |\tilde{m}_i(t) - \tilde{m}_i(s)|^p\Big)^{1/p}\\
&= \Big(\frac{2^p}{n}\sum_{i=1}^n \omega(B^{(i)}, T, \delta)^p + \omega(\tilde{m}_i, T, \delta)^p
\Big)^{1/p}.
\end{align*}
Because $dY/dt \leq |v|$, applying Lemma \ref{lemma:skorohodIneq} with $y_1 = -Y$ and $y_2(t) = |v|t$, we have 
\[
\omega(\tilde{m}_i, T, \delta) \leq \omega(B^{(i)} + y_2, T, \delta) \leq |v|\delta + \omega(B^{(i)},T, \delta).\] That is, the maximum change the Brownian path makes below $Y$ until time $T$, in the span of
 $\delta$ time, is bounded by the change made by the line $|v|t$ in addition to the change of the Brownian path. 
This gives
\begin{align*}
&\omega'(\tilde{\pi}^{(n)}, T, \delta) \leq \Big(2^pv\delta + \frac{2^{p+1}}{n}\sum_{i=1}^n \omega(B^{(i)}, T, \delta)^p\Big)^{1/p}
\end{align*}
almost surely. For simplicity we take $v = 0$ in the remaining argument. Setting $I_\e = (\e^p/2^{p+1}, \infty),$
\begin{align*}
&\prob\big(\sup_{n > N} \omega'(\tilde{\pi}^{(n)}, T, \delta) > \e\big)\\
&\leq \prob\Big(\sup_{n > N}\frac{1}{n}\sum_{i=1}^n\omega(B^{(i)}, T, \delta)^p > \frac{\e^p}{2^{p+1}}\Big)\\
&= \mathlarger{\ex} \, \mathlarger{1}_{I_{\e}}\Big\{{\sup_{n > N}\frac{1}{n}\sum_{i=1}^n\omega(B^{(i)}, T, \delta)^p}\Big\}.
\end{align*}
By Corollary \ref{cor:modulusSLLN} and the dominated convergence theorem,
\begin{align*}
&\lim_{N \to \infty}\mathlarger{\ex} \, \mathlarger{1}_{I_\e}\Big\{{\sup_{n > N}\frac{1}{n}\sum_{i=1}^n\omega(B^{(i)}, T, \delta)^p}\Big\}\\
&= \ex 1_{I_\e}\Big\{\ex \, \omega(B^{(i)}, T, \delta)^p \Big\}\\
&\leq  \mathlarger{\ex \, 1}_{I_\e}\Big\{C_p\Big(\delta\log\frac{T}{\delta}\Big)^{p/2}\Big\}\\
&= 1_{I_\e}\Big\{C_p\Big(\delta\log\frac{T}{\delta}\Big)^{p/2}\Big\}.
\end{align*}
In other words,
\begin{align}
&\lim_{N \to \infty} \prob(\sup_{n > N} \omega'(\tilde{\pi}^{(n)}, T, \delta) > \e)
\leq \mathlarger{1}_{I_\e}\Big\{C_p\Big(\delta\log\frac{T}{\delta}\Big)^{p/2}\Big\},
\end{align}
which is 0 when $\delta$ satisfies 
$$\ds \delta\log\frac{T}{\delta} < \frac{\e^2}{4^{(p+1)/p}\,C_p^{2/p}}.$$
With this chosen value of $\delta,$ take $N$ large enough so that

$$\prob\big(\sup_{n > N} \omega'(\tilde{\pi}^{(n)}, T, \delta) > \e\big) < \eta/2,$$
\noindent
then appropriately shrink $\delta$ until
\[
\ds \sum_{i=1}^N\prob(\omega'(\tilde{\pi}^{(i)}, T, \delta) > \e)< \eta/2,
\]
to conclude that
\[
\prob\big(\sup_n \omega'(\tilde{\pi}^{(n)}, T, \delta) > \e\big) < \eta.
\]
\end{proof}

\begin{proof}[Proof of Lemma \ref{lemma:expMartMoments}]
Since $V$ is continuous, it is bounded, and so it follows from Novikov's condition 
that $Z$ is a martingale. In fact, if $M(t)$ is a continuous local martingale,
$Z' := \exp(M - \frac{1}{2}\langle M\rangle)$ is a local martingale from It\^o's
lemma. Because it is
non-negative we may apply Fatou's lemma to an exhaustive sequence of local times
$T_n \overset{a.s.}{\to} \infty$ to see
$\ds \ex(Z'(t) | \mc{F}_s) \leq \lim_{n\to \infty}\ex(Z'(t\land T_n) |
\mc{F}_s) = \lim_{n\to \infty}Z'(s\land T_n) = Z'(s).$
That is, $Z'$ is a supermartingale. Take any $p, q, q' > 0$ with
$\frac{1}{q} + \frac{1}{q'} = 1.$ Then
\begin{align*}
\ex [Z(t)^p] &= \ex\Big[\exp\Big(-p\int_0^tV_s\, \mathrm{d}B_S - \frac{qp^2}{2}\int_0^tV_s^2\, \mathrm{d}s\Big)
\exp\Big(\frac{p(qp-1)}{2}\int_0^tV_s^2\, \mathrm{d}s\Big)\Big].
\end{align*}
Now apply Holder's inequality with $q, q'$
\begin{align*}
\ex [ Z(t)^p] &\leq \ex\Big[\exp\Big(-pq\int_0^tV_s\, \mathrm{d}B_s - \frac{q^2p^2}{2}\int_0^tV_s^2\, \mathrm{d}s\Big)\Big]
^{1/q}
\\
&\times \ex\Big[\Big(\frac{pq'(qp-1)}{2}\int_0^tV_s^2\, \mathrm{d}s\Big)\Big]^{1/q'}\\
&\leq 1 \cdot \ex\Big[\exp\Big(\frac{pq'(qp-1)}{2}\int_0^tV_s^2\, \mathrm{d}s\Big)\Big]^{1/q'}\\
&= \exp\Big(\frac{pq'(qp-1)}{2}\int_0^tV_s^2\, \mathrm{d}s\Big) < \infty.
\end{align*}
Here
 \[
 \ds \ex\Big[\exp\Big(-pq\int_0^tV_s\, \mathrm{d}B_s - \frac{q^2p^2}{2}\int_0^tV_s^2\, \mathrm{d}s\Big)\Big] \leq 1,
 \]
 since 
 \[
 \ds M(t) = -pq\int_0^tV_s\, \mathrm{d}B_s, \ \langle M\rangle(t) = q^2p^2\int_0^tV_s^2\, \mathrm{d}s
 \]
 and because $\exp (M(t) - \frac{1}{2}\langle M \rangle(t))$ is a supermartingale as explained above.
\end{proof}

\section{Uniqueness of the heat equation with free-boundary}
In this section we give existence and uniqueness 
for the PDE with free boundary condition $(p(t, \cdot), y(t))$ 
which
is the solution of our hydrodynamic limit given by \eqref{eq:pdeOfLimit}. If $(p, y)$ is a 
solution and $p(t, \cdot)$ represents the
distribution of heat, then the equation in Theorem \ref{HL} is interpreted as saying the 
acceleration of the moving barrier $y(t)$ is proportional to its temperature. The hydrodynamic limit
already yields existence of such a solution. In that statement of Theorem \ref{HL}
 $(\pi^{(n)}, Y^{(n)})$ converges in some sense to a solution of \eqref{eq:pdeOfLimit}. Here we show this is
the only solution by demonstrating uniqueness of this PDE with free boundary. 
\begin{remark}
For any solution $(p,\, y)$ of \eqref{eq:pdeOfLimit} make a substitution
$u(t, x) = p(t, x + y(t))$ and see $(u, \, y)$ is a classical solution to
\begin{align}
\begin{split}\label{eq:heatEqnTimeFree1}
&\frac{\partial u(t, x)}{\partial t} = \frac{1}{2}\frac{\partial^2 u(t, x)}{\partial x^2} + y'(t)\frac{\partial u(t, x)}{\partial x}, \text{ when } x > 0 ,
\\
&\frac{\partial^+ u(t, x)}{\partial x^+} = -2y'(t)u(t, x), \text{ at $x = 0$}\\
&y''(t) = -\frac{K}{2}u(t, 0), \ y(0) = 0, \, y'(0) = v \in \R, \
y'' \in C([0, T], \R),\\
&\lim_{t \downarrow 0} u(t, x) = f(x)dx, \ \, f \in L^1(\R_+).
\end{split}
\end{align}
In this way the two problems are equivalent.
\end{remark}
\begin{theorem}
The PDE problem \eqref{eq:heatEqnTimeFree1},
and equivalently that in \eqref{eq:pdeOfLimit}, has a unique solution for any 
$K\geq 0.$
\end{theorem}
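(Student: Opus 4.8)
The plan is to show that a classical solution is rigidly determined by its free boundary, and conversely, so that at most one exists; since Theorem~\ref{HL} already produces a solution, this gives uniqueness. Concretely, the argument splits into two links: the boundary curve $y$ determines the density $p$ through the reflected–Brownian–motion representation of Corollary~\ref{cor:densityOfRBM}, and $p$ in turn determines $y$ through a closed integral equation to which a Gr\"onwall estimate applies. I work with \eqref{eq:pdeOfLimit}; the equivalence with \eqref{eq:heatEqnTimeFree1}--\eqref{eq:heatEqnTimeFree3} noted in the preceding remark is used only to run the energy estimate below.

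First I would fix a classical solution $(p,y)$. Since $y\in C^{2}([0,T],\R)$, the pair in particular solves the \emph{linear} problem given by the first two lines of \eqref{eq:pdeOfLimit} with $y$ treated as a prescribed curve, and with initial datum $\pi_0$. By Corollary~\ref{cor:densityOfRBM} (with $g$ there replaced by $y$), the density $p^{y}(t,x):=\prob\!\bigl(X^{y}(t)\in\md x\bigr)$ of the reflected process
\[
X^{y}(t)=\xi+B(t)+m^{y}(t),\qquad m^{y}(t)=\sup_{0<s<t}\bigl(-(\xi+B(s)-y(s))\bigr)\lor 0,
\]
with $\xi\sim\pi_0$ independent of $B$, is \emph{also} a classical solution of that same linear oblique–derivative parabolic problem. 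Invoking uniqueness for this linear problem on the half–line (discussed in the last paragraph) forces $p\equiv p^{y}$; in particular $p(t,y(t))=p^{y}(t,y(t))$ for all $t$.

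Next, $m^{y}$ is the Skorohod regulator of $X^{y}-y$ at $0$, hence its semimartingale local time there, so the occupation–times formula gives $\ex\,m^{y}(t)=\tfrac12\int_0^{t}p^{y}(s,y(s))\,\md s$ (the same computation as in the proof of Theorem~\ref{HL}, and consistent with Corollary~\ref{cor:convToY}). Combining this with the third line of \eqref{eq:pdeOfLimit}, namely $y''(t)=-(K/2)\,p(t,y(t))=-(K/2)\,p^{y}(t,y(t))$, together with $y(0)=0$, $y'(0)=v$, we obtain the closed equation $y'(t)=v-K\,\ex\,m^{y}(t)$. Now let $y_{1},y_{2}$ be the boundary curves of two solutions. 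Pathwise, $|m^{y_{1}}(t)-m^{y_{2}}(t)|\le\sup_{0<s<t}|y_{1}(s)-y_{2}(s)|=\|y_{1}-y_{2}\|_{[0,t]}$, using $|\sup f-\sup g|\le\sup|f-g|$ and $|a\lor 0-b\lor 0|\le|a-b|$; hence $|y_{1}'(t)-y_{2}'(t)|\le K\,\ex\,|m^{y_{1}}(t)-m^{y_{2}}(t)|\le K\,\|y_{1}-y_{2}\|_{[0,t]}$. Since $y_{1}(0)=y_{2}(0)$ and $y_{1}'(0)=y_{2}'(0)$, integrating twice gives $\|y_{1}-y_{2}\|_{[0,t]}\le K\int_0^{t}\|y_{1}-y_{2}\|_{[0,s]}\,\md s$, and Gr\"onwall forces $y_{1}\equiv y_{2}$ on $[0,T]$; then $p_{1}=p^{y_{1}}=p^{y_{2}}=p_{2}$, completing the proof.

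The main obstacle is the linear uniqueness step used above. For it I would pass to $u(t,x)=p(t,x+y(t))$ solving \eqref{eq:heatEqnTimeFree1}--\eqref{eq:heatEqnTimeFree2}, let $w$ be the difference of two such solutions (so $w(t,\cdot)\to 0$ as $t\downarrow 0$), and run the energy identity
\[
\frac{\md}{\md t}\,\tfrac12\|w(t,\cdot)\|_{L^{2}(\R_+)}^{2}\;=\;\tfrac12\,y'(t)\,w(t,0)^{2}\;-\;\tfrac12\|w_{x}(t,\cdot)\|_{L^{2}(\R_+)}^{2},
\]
obtained by multiplying by $w$, integrating by parts, and using the Robin condition $w_{x}(t,0)=-2y'(t)w(t,0)$. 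The boundary term is absorbed via $w(t,0)^{2}=-2\int_0^{\infty}w\,w_{x}\,\md x\le\varepsilon\|w_{x}\|^{2}+\varepsilon^{-1}\|w\|^{2}$ with $\varepsilon$ chosen small relative to $\sup_{[0,T]}|y'|$ (finite since $y\in C^{2}$), leaving $\tfrac{\md}{\md t}\|w\|^{2}\le C\|w\|^{2}$, and Gr\"onwall from $t=\delta$ with $\delta\downarrow 0$ gives $w\equiv 0$. Some care is needed justifying the $L^{2}$ framework and the boundary integrations near $t=0$, since $\pi_0$ need not have a density; this is where restricting to the classical–solution class of the theorem (where $p(t,\cdot)$ is a genuine, rapidly decaying probability density for $t>0$) is convenient, or alternatively one replaces this step by a verification argument applying It\^o--Tanaka to $s\mapsto p(t-s,X^{y}_{s})$, the boundary condition turning the local–time term into a martingale and identifying $p(t,\cdot)$ with the law of $X^{y}_{t}$.
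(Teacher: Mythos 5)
Your proposal is correct and follows essentially the same route as the paper: use the stochastic representation from Corollary \ref{cor:densityOfRBM} to turn the boundary condition into the closed relation $y'(t)=v-K\,\ex\,m^{y}(t)$, bound $|m^{y_1}(t)-m^{y_2}(t)|\le\|y_1-y_2\|_{[0,t]}$ pathwise, and conclude $y_1\equiv y_2$ (you via Gr\"onwall, the paper via a short-time contraction with $(K/2)t^2<1$ plus a renewal argument), whence $p_1\equiv p_2$. The only substantive difference is that you make explicit, via the energy estimate (or the It\^o--Tanaka verification alternative), the identification of a classical solution with the reflected-Brownian-motion density for the prescribed curve $y$, a step the paper leaves implicit when it invokes Corollary \ref{cor:densityOfRBM}.
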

\begin{remark}
The regularity of the boundary plays an important role because if $y''$ exists then
the solution to \eqref{eq:pdeOfLimit} has a stochastic representation given from
Corollary \ref{cor:densityOfRBM}.
We exploit this to show uniqueness.
\end{remark}
\begin{proof}
Theorem \ref{HL} gives existence. To show uniqueness we will prove
the corresponding barriers $y_1, y_2$ of any two solutions are in fact equal. 
Assume that $(p_1(t, \cdot), y_1(t)), \ (p_2(t, \cdot), y_2(t))$ are pairs solving 
the PDE with the given initial conditions.
%  In 
% \cite[Section 2]{burdzy2004heata} Burdzy, Chen, and Sylvester solve the PDE where $y_i \in C^2([0, T], \R).$
Following Corollary \ref{cor:densityOfRBM} above we know that the 
transition density
$p_i(t, x)$ of Brownian motion reflecting from $y_i$ satisfies the PDE
\begin{flalign}\label{pde:limit_IC}
\begin{split}
&\frac{\partial p_i}{\partial t} = \frac{1}{2}\Delta_yp_i, \ y > y_i(t),\\
&\frac{\partial^+ p_i}{\partial y^+} = -2y_i'(t)p_i, \ y = y_i(t),\\
% &y'(t) = -\frac{K}{2}\int\limits_0^tp_i(s, y(s))ds\notag, \ p_i(s, y(s)) \in C([0, T], \R)\\
&\lim_{t \downarrow 0}p_i(t, y)\md y = f(y) \md y \in L^p(\R_+), p \geq 1
\end{split}
\end{flalign}
\begin{remark}\label{remark:initialCondition}
Given a $C^2$ function $y$ solving classical solution to the above PDE, one sees from Propositions \ref{densitysolution} and \ref{densityBounds} that $p(t, x)$ is density of 
Brownian motion reflecting from $y$ with initial condition $\xi \dist f(\md x)$. (For fixed $y$
there is a unique such $p(t, x)$, see Friedman \cite[Th. 15, Ch. 2]{Friedman}.) Consequently,
the second condition in \eqref{pde:limit_IC} is interpreted as convergence of the density of Brownian motion reflecting above $y$ to its value at time zero. From an analysis perspective, we have $\mc{W}_p(f(x)\md x, p(t, x)\md x) \to 0$, as $t \to 0$, which can easily be gathered from Propositions \ref{densitysolution} and \ref{densityBounds}. In particular, once we show this solution is unique, the stochastic representation shows $\int_{y(t)}^\infty p(t, x)\, \md x = 1$ and $\int_{y(t)}^\infty x^pp(t, x)\, \md x < \infty$ for all $t \in [0, T].$
\end{remark}
Without loss of generality we assume $\int f(y)\, \mathrm{d}y = 1.$ Let $(\Omega, \mathcal{F}, (\mathcal{F}_t)_{t \ge 0}, \prob,)$ be a probability space supporting an $\mathcal{F}_t-$adapted Brownian motion $B(t)$ and an independent 
random variable $\xi$ with distribution determined by $f(y)\md y$. As in the proof of Theorem
\ref{HL}, or Proposition \ref{prop:subsequentialLimit}, we know 
\[
\ds y_i'(t) = v -\frac{K}{2}\ex \, m_i(t), \ \text{ where } \ \ds m_i(t) = \max_{u \in [0, t]}\big(B(u) + \xi - y_i(u)\big)^-.
\] 
Linearity of expectation yields the following comparison between $y_1', y_2':$

\begin{align*}
|y_1(t) - y_2(t)| &\leq \int\limits_0^t|y_1'(s) - y_2'(s)|\, \mathrm{d}s\\
&= \frac{K}{2}\int\limits_0^t |\ex \, (m_1(s) - m_2(s))|\, \mathrm{d}s\\
&\leq \frac{K}{2}\int\limits_0^t\|y_1 - y_2\|_{[0, s]}\, \mathrm{d}s 
\leq \frac{K}{2}t\|y_1 - y_2\|_{[0, t]}.
\end{align*}

Because the right hand is nondecreasing this inequality holds when the left hand
is maximized across time, so
\begin{align*}
\|y_1 - y_2\|_{[0, t]} \leq \frac{K}{2}t\|y_1 - y_2\|_{[0, t]}.
\end{align*}
Therefore $\|y_1 - y_2\|_{[0, t]} \leq C\|y_1 - y_2\|_{[0, t]}$
for some $C < 1$ as long as $0 \leq t < t^* < 2/K.$ As a result
$\ds \|y_1 - y_2\|_{[0, t^*]} \, = 0$ for all $t^* \in [0, \sqrt{2/K}].$ In
other words, the barriers $y_1$ and $y_2$ are identical up until this fixed 
positive time. Repeating this argument shows that $y_1$ and $y_2$ are 
identical across the entire interval $[0, T].$ That is, there is a unique boundary $y$ associated to any pair $(p, y)$ solving \eqref{eq:pdeOfLimit}. Fixing $y,$ uniqueness of $p$ follows as a special case of the uniqueness result in Friedman \cite[Th. 15, Ch. 2]{Friedman} (or \cite[Th. 2.3]{burdzy2004}).
\end{proof}

\bibliography{hydrolimit_Revision3_imsart.bbl}
\bibliographystyle{plain}

\end{document}